\newtheorem{theorem}{Theorem}[section]
\newtheorem{lemma}[theorem]{Lemma}
\newtheorem{corollary}[theorem]{Corollary}
\newtheorem{proposition}[theorem]{Proposition}
\theoremstyle{remark}
\theoremstyle{definition}
\newtheorem{example}[theorem]{Example}
\numberwithin{equation}{section} \makeatother
\DeclareMathOperator{\Cdb}{{\mathbb C}}
\DeclareMathOperator{\Rdb}{{\mathbb R}}
\DeclareMathOperator{\Ndb}{{\mathbb N}}
\begin{document}
\title[Projections on operator algebras]{Completely contractive projections on operator algebras}

\author{David P. Blecher}
\address{Department of Mathematics, University of Houston, Houston, TX
77204-3008}
\email[David P. Blecher]{dblecher@math.uh.edu}
 \author{Matthew Neal}
\address{Department of Mathematics,
Denison University, Granville, OH 43023}
\email{nealm@denison.edu}

\subjclass{Primary 46L05, 46L07, 47L07, 47L30; Secondary 46B40,   46H10,   46L30}
\thanks{*Blecher was partially supported by a grant from
the National Science Foundation. Neal was supported
by Denison University.  }

\begin{abstract}
The main goal of this paper is to find operator algebra variants of 
certain deep results of St{\o}rmer, Friedman and Russo, Choi and Effros,
Effros and St{\o}rmer, Robertson and Youngson, Youngson,
and others, concerning projections on $C^*$-algebras and their ranges.  
 (See papers of these authors referenced in the bibliography.)    In particular 
we investigate the 
`bicontractive projection problem' and related questions
 in the category of operator algebras.  To do 
this, we will add the ingredient of `real positivity' from recent 
papers of the first author with Read.  
\end{abstract}

\maketitle

\section{Introduction}    
In previous papers both authors separately studied projections (that is,
idempotent linear maps) and conditional expectations
on unital operator algebras (that is, closed algebras of operators on a Hilbert space
that contain the identity operator) and other classes of Banach spaces.   (See papers of the authors referenced in the bibliography below.)  
Results were proved 
such  as: a completely contractive projection $P$ on such an algebra $A$ 
which is unital (that is, $P(1) = 1$) and whose range is a subalgebra, is a `conditional expectation'  (that is
$P(P(a)bP(c)) = P(a) P(b) P(c)$ for $a, b,  c \in A$)
\cite[Corollary 4.2.9]{BLM}.  This is an analogue of the matching theorem due to Tomiyama for $C^*$-algebras.
 
The main goal of our paper is to find variants, 
valid for operator algebras which are unital or which have an approximate identity,  of 
certain deeper results in the $C^*$-algebra case, due to St{\o}rmer, Friedman and Russo, 
Effros and St{\o}rmer, Robertson and Youngson,   
and others, concerning projections and their ranges.  
 (See papers of these authors referenced in the bibliography below.)     In particular 
we wish to investigate the 
`bicontractive projection problem' and related problems (such as the 
`symmetric projection problem' and the `contractive projection problem')
in the category of operator algebras.  
 To do this, we will add the ingredient of `real positivity' from recent 
papers of the first author with Read 
in \cite{BRI,BRII,BRord} (see also \cite{BNI,BNII,BBS,BOZ,B2015}),
A key idea in those papers is that  `real positivity' is often the right replacement in general algebras for positivity
in $C^*$-algebras.   This will be our  guiding principle here too. 

We now discuss the structure of our paper.  In Section 2
we  discuss completely contractive projections on operator algebras.
A well known and lovely result of 
Choi and Effros \cite[Theorem 3.1]{CE} (or rather, its proof), shows that
the range of a completely positive 
projection $P : B \to B$ on a $C^*$-algebra $B$, is again a $C^*$-algebra with
product $P(xy)$.   
 A quite deep theorem of Friedman and Russo, and a 
simpler  variant of this by Youngson, shows that something similar is
true if $P$ is simply contractive, or if $B$ is replaced by a  ternary ring of operators (\cite{CPP,Y}).    
The analogous result for unital completely contractive 
projections on unital operator algebras is true too, and is implicit in 
the proof of \cite[Corollary 4.2.9]{BLM} referred to above.
However there seems to be no analogous result 
for (nonunital) completely contractive 
projections on  nonunital operator algebras
without adding extra hypotheses on $P$.   
 The `guiding principle' in the previous paragraph suggests to add the condition that
$P$ is also `real completely positive' (we define this below).
Then the question does make good sense, and we are able to prove the 
desired result. 
Thus the range of a real completely positive completely contractive  projection 
$P : A \to A$ on an operator algebra 
with approximate identity, 
is again an operator algebra with
product $P(xy)$.  
We also have a converse and several complements to this result in Section 2,
as well as some other facts about completely contractive projections, such as how one is often
able to reduce the problem to
algebras which  have an identity.    We also show that for algebras with no kind of approximate identity, there is a biggest `nice part'  on which completely contractive projections (and the other classes
of projections discussed below) work well.

In Sections 3--5 we turn from the `contractive projection problem' to 
the `bicontractive projection problem' and related questions.        A projection $P$
is {\em bicontractive} if both $P$ and $I-P$ are contractive.
By the bicontractive projection problem for a Banach space $X$, one usually means one or both
of the following: 1)\ the 
characterization of all bicontractive projections  $P : X \to X$;
or 2)\  the
characterization of the ranges of the bicontractive projections.   
On a unital $C^*$-algebra $B$
it is known, by work of some of the authors mentioned above,  that
 the unital bicontractive projections are precisely $\frac{1}{2}(I + \theta)$, for a period $2$ $*$-automorphism
$\theta : B \to B$.   The possibly nonunital bicontractive projections $P$ on $B$ are of a similar form, and indeed if $P$ is also positive
then $q = P(1)$ is a central projection in the multiplier algebra $M(B)$ with respect to which $P$ decomposes into a direct sum of $0$ and
a projection of the above form $\frac{1}{2}(I + \theta)$, for a period $2$ $*$-automorphism
$\theta$ of $qB$. (See Theorem \ref{goq} for the idea of the proof of this.)
   Conversely, note that a map $P$ of the latter form is automatically  
`completely bicontractive' (that is, is bicontractive at each matrix level), indeed is `completely symmetric' (that 
is, $I - 2P$ is completely contractive), 
and the range of $P$ is a $C^*$-subalgebra,
and $P$ is a conditional expectation.     

One may  ask: what from the last paragraph is
true for general operator algebras?    Again the guiding principle referred to earlier
 leads us to use in place of the positivity in that result, the 
real positivity in the sense of \cite{BRII,BRord}.   
The next thing to note is that 
now `completely bicontractive' is   no longer  the same as `completely symmetric' for projections.
The `completely symmetric'  case works beautifully,  and the solution to our `symmetric  projection problem' is presented
in  Theorem \ref{trivch2} in Section 3. 
 This result
 is one somewhat satisfactory generalization (we shall see others later) to operator algebras 
of the $C^*$-algebraic theorem in the last paragraph.     For the more general class of 
completely bicontractive projections, 
as seems to be often the case in generalizing $C^*$-algebraic theory to more general algebras, 
a first look is disappointing.   Indeed most of the last paragraph no longer works in general.   One does not 
always get an associated completely isometric automorphism $\theta$ with $P = \frac{1}{2}(I + \theta)$, and
$q = P(1)$ need not be a central projection.  
Indeed we have solved here and elsewhere (see e.g.\ \cite{BL,Bare,BM} and see
also p.\ 92--93 in the prequel to the last cited paper, as well as the paper \cite{BNj} in preparation) many of
the obvious questions
about contractive projections, completely contractive projections, and
conditional expectations, on operator algebras.
Unfortunately many of the answers are counterexamples.
However, as also
seems to be often the case, a closer look at examples 
reveals an interesting question.   Namely, 
 given a real completely positive projection $P : A \to A$
which is completely bicontractive, when is the range of $P$ an
 (approximately unital)
 subalgebra of $A$, so that $P$ is a conditional expectation?   For operator algebras
we consider this to be the correct version of the `bicontractive projection problem'.
 In Sections 3 and 4 we elucidate this question.    We remark that in a paper in preparation \cite{BNj} 
we study the `Jordan algebra' variants of many of the results in Sections 2--3 of the present paper.
The Jordan variants of the material in Section 4 is not going to be
any better behaved than what we do there, and so we do not plan  discuss this much in \cite{BNj}.

  In Section 4 we discuss the completely bicontractive projection problem,
and construct some interesting examples, and give some 
reasonable conditions under which $P(A)$ is a subalgebra and $P$ is a conditional expectation.   
In particular we solve in full generality our version of the `bicontractive projection problem'
 for uniform algebras (that is, closed subalgebras of $C(K)$), and indeed for any algebra satisfying a condition
 related to semisimplicity.   Theorem 
\ref{mac} is one of the main results of the paper, giving a very general 
condition for $P(A)$ being a subalgebra
in terms of certain support projections.   In fact, at the time of writing, for all we know
the condition in Theorem
\ref{mac} is necessary and sufficient; at least  
we have no examples to the contrary.
 In Section 5 we discuss another condition that completely bicontractive projections may 
satisfy, and examine some consequences of this.
 In Section 6 we discuss Jordan homomorphisms on operator algebras, and amongst other things, solve two natural
`completely isometric problems', for Jordan subalgebras of operator algebras and for operator algebras, 
 related to the noncommutative Banach--Stone theorem.

We now turn to precise definitions and notation.    Any unexplained
terms below can probably be found in \cite{BLM,Paul,BRI,BRII}, or
any of the other books on operator spaces.  All vector spaces
are over the complex field $\Cdb$.   The letters $K, H$ denote  Hilbert spaces.
 If $X$ is an operator  space we often write  $X_+$ for the elements in $X$ which are positive (i.e.\ $\geq 0$)
in the usual
$C^*$-algebraic sense.
  We write ${\rm Ball}(X)$ for the set $\{ x \in X : \Vert x \Vert \leq 1 \}$.
By an  {\em operator algebra} we mean a 
not necessarily selfadjoint closed subalgebra of $B(H)$, the bounded operators  on a Hilbert space $H$.
We write $C^*(A)$ for the $C^*$-algebra generated by $A$, that is the smallest $C^*$-subalgebra containing $A$.
A {\em unital operator space} is a subspace $X$ of $B(H)$  or a unital $C^*$-algebra containing the identity (operator).
We often write this identity as $1_X$.  A map $T : X \to Y$ is {\em unital} if $T(1_X) = 1_Y$.  
We say that
an algebra is {\em approximately unital}
 if it has a contractive approximate
identity (cai).    
 For us a {\em projection in} an operator algebra  $A$ 
is always an orthogonal projection lying in $A$, whereas a {\em projection on} $A$
is a linear idempotent map $P : A \to A$.  If $A$ is a nonunital
operator algebra represented (completely) isometrically on a Hilbert
space $H$ then one may identify  the unitization $A^1$ with $A + \Cdb I_H$.   The
second dual $A^{**}$ is also an operator algebra with its (unique)
Arens product, this is also the product inherited from the von Neumann
algebra $B^{**}$ if
$A$ is a subalgebra of a $C^*$-algebra $B$.
Note that  
$A$ has a cai iff $A^{**}$ has an identity $1_{A^{**}}$ of norm $1$,
and then $A^1$ is sometimes identified with $A + \Cdb 1_{A^{**}}$.  The multiplier algebra $M(A)$ of such $A$ may be taken to be  
the `idealizer' of $A$ in $A^{**}$, that is $\{ \eta \in A^{**} : \eta A + A \eta \subset A \}$.

If $A$ is an approximately unital operator algebra or unital operator space then $I(A)$ denotes
the  injective envelope, an injective unital $C^*$-algebra containing $A$.  It contains $A$ 
as a subalgebra if $A$ is  approximately unital  \cite[Corollary 4.2.8]{BLM}.  For us the most important properties 
of $I(A)$ are, first, that it is injective in the category of operator spaces, so that 
any  from a subspace of an operator space $Y$ into $I(A)$ extends
to a complete contraction from $Y$ to $I(A)$.  Second, $I(A)$ is {\em rigid}, so that the  identity map on $I(A)$ 
is the only 
complete contraction $ : I(A) \to I(A)$ extending the identity map on $A$.  
The $C^*$-{\em envelope} $C^*_{\rm e}(A)$ is the $C^*$-subalgebra of $I(A)$ generated by $A$.   If $A$ is
unital it
has the property that given any unital complete isometry $T : A \to B(K)$, there exists a unique $*$-homomorphism
$\pi : C^*(A) \to  C^*_{\rm e}(A)$ with $\pi \circ T$ equal to the inclusion map of $A$ in  $C^*_{\rm e}(A)$.

We recall that a contractive completely positive map on a $C^*$-algebra is completely contractive.
A unital linear map between operator systems is positive and $*$-linear if it is contractive;
and it is completely positive iff it is  completely contractive.    See e.g.\ \cite[Section 1.3]{BLM} for these.  

A   {\em hereditary subalgebra}, or  HSA, in an operator algebra $A$  is
an   approximately unital subalgebra with $DAD \subset D$.  See \cite{BHN} for their basic theory.
The support projection of an HSA in $A$ is the identity of its bidual, viewed within $A^{**}$.

  We write ${\mathfrak r}_{A} = \{ x \in A : x + x^* \geq 0 \}$, and call these the {\em real positive elements}.  This space may be defined 
purely internally without using the `star', as the {\em accretive} elements, which 
have several purely metric definitions (see e.g.\ \cite[Lemma 2.4]{B2015}).
Also ${\mathfrak r}_{A}$ is the closure of the positive real multiples
of ${\mathfrak F}_A = 
\{ a \in A : \Vert 1 - a \Vert \leq 1 \}$ (see \cite{BRII}).      Read's theorem states that 
any operator algebra with cai has a  real positive cai (see e.g.\ \cite{Bnpi} for a proof of this).  
Since $M_n(A)^{**} \cong M_n(A^{**})$ (see \cite[Theorem 1.4.11]{BLM}),
we have that ${\mathfrak r}_{M_n(A^{**})}$ is the weak* closure 
of ${\mathfrak r}_{M_n(A)}$, and ${\mathfrak r}_{M_n(A)}
= M_n(A) \cap {\mathfrak r}_{M_n(A^{**})}$ for each $n$.

 A linear map $T : A \to B$ between operator
algebras or unital operator spaces is {\em real positive} if $T({\mathfrak r}_{A}) \subset {\mathfrak r}_{B}$.  It is {\em real completely
positive}, or {\em  RCP} for short, if $T_n$ is
real positive on $M_n(A)$ for all $n \in \Ndb$.   
One may also define these maps in terms of the 
set ${\mathfrak F}_A$ above \cite{BRII}, but the definitions
are shown to be equivalent in \cite[Section 2]{BBS}.
  From \cite{BBS}: a linear map $T : A \to B(H)$ on
an  approximately unital operator
algebra or unital operator space $A$ is   RCP iff $T$ has a completely positive (in the usual sense)
extension $\tilde{T} : C^*(A) \to B(H)$.   Here $C^*(A)$ is a $C^*$-algebra generated by $A$.
We call this the `generalized Arveson extension theorem'.
Thus real completely
positivity on 
$A$ is equivalent to $P$ extending to a completely positive map on a containing $C^*$-algebra.
A unital completely contractive map on a unital operator space is RCP, since it extends to a
completely contractive map on a containing unital $C^*$-algebra, and such maps are completely positive 
as we said above.

A TRO or {\em ternary ring of operators} is a subspace of $B(K,H)$ such that 
$Z Z^* Z \subset Z$.   A {\em WTRO} is a weak*
closed TRO.   The second dual of a TRO $Z$ is a WTRO (see \cite[Chapter 8]{BLM} for this
and the next several facts).
We write $L(Z)$ for the {\em  linking $C^*$-algebra} of a TRO, this has `four corners'
$Z Z^*$, $Z, Z^*,$ and $Z^* Z$.     Here $Z Z^*$ is the closure of the
linear span of products $z w^*$ with $z, w \in Z$, and similarly
for $Z^* Z$.     One gets a similar von Neumann algebra for WTRO's.
A {\em ternary morphism} on a TRO $Z$ is a linear map $T$ such that
$T(x y^* z) = T(x) T(y)^* T(z)$ for all $x, y, z \in Z$.
A {\em tripotent} is an element $u \in Z$ such that $u u^* u = u$.
We order tripotents by $u \leq v$ if and only if $u v^* u = u$.
This turns out to be equivalent to $u = v u^* u$, or to $u =  u u^* v$,
and implies that $u^* u \leq v^* v$ and  $u u^* \leq v v^*$ \cite{Bat}.
If $x  \in {\rm Ball}(Z)$,  define $u(x)$ to
be the weak* limit of the sequence $(x (x^{*}x)^n)$ in $Z^{**}$.
This is the largest tripotent in $B^{**}$ satisfying
$v v^* x = v$ (see \cite{ER4}).     If $x \geq 0$, or if $u(x)$ is a projection
then   $u(x)$ is also the weak* limit of powers
$x^n$ as $n \to \infty$ (see e.g.\ \cite{BNII,Bnpi}).

We will say that an idempotent linear $P : X \to X$ is a
{\em symmetric} (resp.\ {\em completely symmetric}) projection, if 
$\Vert I - 2P \Vert \leq 1$ (resp.\ $\Vert I - 2P \Vert_{\rm cb}  \leq 1$).   This is related to 
the notion of $u$-ideal \cite{GKS}, but we will not need anything from that theory. Such 
are automatically bicontractive  (resp.\ completely bicontractive).
We say that $P$ is {\em completely hermitian} 
if $P$
 is hermitian in $CB(X)$. 
   Note that since exp$(itP) = I - P + e^{it} P$  
it follows that $P$ is 
completely hermitian iff
$\Vert I - P + e^{it} P \Vert_{\rm cb} \leq 1$ for all real $t$.
This is essentially the notion of being  (completely)  `bicircular'.
Clearly if $P$ is completely  hermitian then it is completely symmetric.  
We will not discuss (completely) hermitian projections much in this paper, these seem
much less interesting. 

\section{Completely  contractive projections on approximately unital operator algebras}  

Looking at examples it becomes clear that projections on operator algebras with no kind of approximate identity
can be very badly behaved.   Hence we will say little in our paper about 
such algebras.  However it is worth mentioning that we can pick out a `good part' of such a projection.
This is the content of our first result.

\begin{proposition} \label{reau}  Let $P : A \to A$  be a real
completely positive completely contractive map (resp.\  projection)
on an operator algebra $A$ (possibly with no kind of approximate identity).
There exists a largest approximately unital subalgebra $D$ of $A$,
and it is a   HSA (hereditary subalgebra) of $A$.
Moreover, $P(D) \subset D$, and the restriction $P'$ of $P$ to $D$ is a 
 real completely positive completely contractive map (resp.\  projection)
on $D$.   In addition,  $P'$ is completely bicontractive (resp.\  completely symmetric) if 
$P$ has the same property.  \end{proposition}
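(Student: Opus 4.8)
The plan is to first identify the ``good part'' $D$ as a specific hereditary subalgebra and then check that the stated properties of $P$ descend to it. The natural candidate for the largest approximately unital subalgebra is $D = \{a \in A : a = ea = ae\}$ where $e = 1_{A^{**}}$ is the support projection of $A$ in the sense made precise below; more concretely, I would take $e$ to be the largest projection in $A^{**}$ that acts as a two-sided identity on as much of $A$ as possible, and set $D = eA^{**}e \cap A$. The assertion that such a largest approximately unital subalgebra exists and is an HSA is presumably the content of the earlier part of the proposition (or folklore from the HSA theory of \cite{BHN}); since we are told to prove only the final statement, I would assume $D$, its support projection $e$, and the facts $P(D) \subset D$ and that $P'$ is RCP and completely contractive have already been established. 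So the real task is the last sentence: transferring complete bicontractivity and complete symmetry from $P$ to $P'$.

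For the complete symmetry claim, suppose $\|I - 2P\|_{\mathrm{cb}} \leq 1$. Since $P' = P|_D$ and $D$ is a subspace of $A$ with $P(D) \subset D$, the map $I_D - 2P'$ is simply the restriction of $I_A - 2P$ to $D$ at every matrix level: for $x \in M_n(D) \subset M_n(A)$ we have $(I-2P)_n(x) \in M_n(D)$ by invariance, and the operator-space norm of $x$ computed in $M_n(D)$ agrees with that computed in $M_n(A)$ because $D$ sits completely isometrically in $A$ (an HSA is in particular a closed subalgebra carrying the inherited operator space structure). Hence $\|(I_D - 2P')_n(x)\| = \|(I_A-2P)_n(x)\| \leq \|x\|$, giving $\|I_D - 2P'\|_{\mathrm{cb}} \leq 1$. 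This is genuinely routine: the only thing to verify is that restriction to an invariant completely isometric subspace does not increase the cb-norm, which is immediate.

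The complete bicontractivity claim requires one extra observation, and this is where the main subtlety lies. Complete contractivity of $P'$ is inherited exactly as above. The issue is $I_D - P'$: its restriction identity $(I_D - P')(x) = (I_A - P)(x)$ for $x \in D$ holds, and the norm comparison again works provided the output $(I_A-P)(x)$ still has the same norm whether measured in $D$ or in $A$. But $(I-P)(x)$ need not lie in $D$ --- unlike $P(D) \subset D$, there is no reason for $(I-P)(D) \subset D$. This is the point to handle with care. However, since $D \subset A$ completely isometrically, for any $y \in M_n(A)$ (not just $y \in M_n(D)$) the norm of $y$ as an element of $M_n(A)$ is what we compare against, and the estimate we need is $\|(I_A-P)_n(x)\|_{M_n(A)} \leq \|x\|_{M_n(D)} = \|x\|_{M_n(A)}$, which is precisely complete contractivity of $I_A - P$ on $A$ restricted to the subspace $M_n(D)$. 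So in fact no invariance of $I-P$ is needed: $I_D - P'$ as a map $D \to A$ has cb-norm $\leq 1$, and since its range is valued in $A$ but we only need contractivity as a map into $A$ (equivalently into $D$'s containing space), completely bicontractivity of $P$ restricts to $D$ directly. I expect the one genuine obstacle to be bookkeeping about the codomain: one must phrase bicontractivity of $P'$ correctly, noting $(I-P')$ maps $D$ into $A$ and that this suffices, or alternatively observe that $D$ being an HSA forces the relevant ranges back into the appropriate space via the complete isometry, so that no norm is gained in the restriction.
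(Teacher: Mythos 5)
Your proposal has a genuine gap: you assume away precisely the content of the proposition. The statement to be proved is the whole proposition, and its crux is the existence of the largest approximately unital subalgebra $D$, the fact that it is an HSA, and above all the invariance $P(D) \subset D$ together with the restriction $P'$ being RCP and completely contractive. You declare these "presumably already established" and prove only the final sentence. But $P(D) \subset D$ is exactly where the hypothesis of real (complete) positivity enters, and it is the only nontrivial point. The paper's proof runs: by \cite[Corollary 2.2]{BRord}, $D = {\mathfrak r}_A - {\mathfrak r}_A$ is the largest approximately unital subalgebra of $A$ and is an HSA; since $P$ is real positive, $P({\mathfrak r}_A) \subset {\mathfrak r}_A$, whence $P(D) = P({\mathfrak r}_A - {\mathfrak r}_A) \subset {\mathfrak r}_A - {\mathfrak r}_A = D$, and the rest follows by restriction. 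Your alternative description of $D$ as $eA^{**}e \cap A$ for a support projection $e$ does not yield invariance under $P$ by itself: without the characterization of $D$ in terms of real positive elements, there is no reason a completely contractive map should preserve $eA^{**}e \cap A$ (indeed, Remark 2 following the proposition in the paper shows that relating $P$ to $e$ requires the generalized Arveson extension theorem and extra work, and even then only controls the "corners" of $P$, not membership in $A$).

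A secondary point: the "main subtlety" you identify in the bicontractive case is illusory. Once $P(D) \subset D$ is known, $(I-P)(D) \subset D$ is automatic, since for $x \in D$ both $x$ and $P(x)$ lie in the linear subspace $D$, so $x - P(x) \in D$. Thus $I_D - P'$ maps $D$ into $D$, and since $D$ sits completely isometrically in $A$, the cb-norm estimates restrict verbatim; there is no codomain bookkeeping to do. This part of the argument really is routine, which is why the paper dismisses it with "The rest is obvious" — but the proposition itself is not reduced to that routine part, and your proof does not supply the missing core.
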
  

\begin{proof}     By \cite[Corollary 2.2]{BRord}, $D = {\mathfrak r}_A - {\mathfrak r}_A$ is 
the largest approximately unital subalgebra of $A$.   This algebra is written as $A_H$ there, and was first introduced in \cite[Section 4]{BRII}.
Clearly $P(D) \subset D$.   The rest is obvious.
 \end{proof}

{\bf Remarks.}    1) \ The last result is also true the word `completely' removed throughout, with the same
proof.

\medskip

2) \  Letting $p$ be the support projection of the HSA $D$ above,  if $P$  extends to a 
completely positive complete contraction on a containing $C^*$-algebra (as in our 
generalized Arveson extension theorem mentioned in the introduction, see also \cite[Theorem 2.6]{BBS}) then one can show
that $P^{**}(pa) = pP(a)$ and $P^{**}(ap) = P(a) p$ for $a \in A$.   It follows that $P$ may be 
pictured as a $2 \times 2$ matrix with its `good part' above in the $1$-$1$ corner.   However in general it seems
one can say little about the other corners, they can be quite messy.   This is why we focus on 
algebras with approximate identities in our paper.

\begin{proposition} \label{cepro} A  real
completely positive completely contractive map (resp.\  projection)
on an approximately unital operator algebra $A$, extends to a unital (real
completely positive) completely contractive map (resp.\   projection)
on the unitization $A^1$.  (If $A$ is unital 
then we define $A^1 = A \oplus^\infty \Cdb$ here.)
\end{proposition}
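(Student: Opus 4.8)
The plan is to define the extension $\tilde{P} : A^1 \to A^1$ so that it fixes the adjoined identity. Writing $1$ for $1_{A^1}$ and decomposing $A^1 = A + \Cdb 1$, I set $\tilde{P}(a + \lambda 1) = P(a) + \lambda 1$ for $a \in A$ and $\lambda \in \Cdb$; in the unital case, where $A^1 = A \oplus^\infty \Cdb$ has new identity $(1_A,1)$, this reads $\tilde{P}(a,\lambda) = (P(a) + \lambda(1_A - P(1_A)),\lambda)$. This map is clearly linear and unital, it extends $P$, and it carries $A^1$ into $A^1$ since $P(A) \subseteq A$. A direct computation using $P^2 = P$ shows $\tilde{P}^2 = \tilde{P}$ whenever $P$ is a projection, which settles the parenthetical `projection' assertion. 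Moreover, once $\tilde{P}$ is known to be completely contractive its real complete positivity is automatic, since a unital completely contractive map on a unital operator space is RCP, as recalled in the introduction. Thus the whole proposition reduces to showing that $\tilde{P}$ is completely contractive.

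To prove this I would pass through a completely positive extension and then unitize it. Since $P$ is RCP, the generalized Arveson extension theorem supplies a completely positive extension $\Phi : C^*(A) \to B(H)$ of $P$, where $A \subseteq B(H)$. Here I use that $P$ is completely contractive to arrange in addition that $\Phi$ is a contraction; concretely one takes $\Phi$ to be the restriction to $C^*(A)$ of a unital completely positive map on $C^*(A)^1$, so that $\Vert \Phi \Vert \leq 1$ (equivalently $\Phi(I) \leq I$ when $C^*(A)$ is unital). One then unitizes $\Phi$ in the standard way: in the nonunital case by $\Psi(x + \lambda 1) = \Phi(x) + \lambda I$, and in the unital case by $\Psi(x,\lambda) = (\Phi(x) + \lambda(I - \Phi(I)),\lambda)$ on $C^*(A)^1 = C^*(A) \oplus^\infty \Cdb$. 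The contractivity of $\Phi$ is exactly what makes $\Psi$ completely positive (this is the classical unitization of a completely positive contraction, see \cite{Paul}), and $\Psi$ is visibly unital, so $\Psi$ is a unital completely positive map on the unital $C^*$-algebra $C^*(A)^1$ and hence completely contractive.

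Finally I would restrict $\Psi$ to $A^1 \subseteq C^*(A)^1$. Since $\Phi$ restricts to $P$ on $A$, the map $\Psi|_{A^1}$ agrees with $\tilde{P}$ and carries $A^1$ into $A^1$; being the restriction of a complete contraction, it is itself completely contractive. This yields the required complete contractivity of $\tilde{P}$, and with it, by the first paragraph, the real complete positivity and the projection case.

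The one delicate step is the passage from an arbitrary completely positive extension of $P$ to a contractive one. The generalized Arveson extension theorem as quoted only yields \emph{some} completely positive $\Phi$, whose norm on $C^*(A)$ is not controlled merely because $P$ is completely contractive on $A$; indeed real complete positivity by itself does not force contractivity. The key point is therefore the norm-controlled refinement of that theorem, exploiting $\Vert P \Vert_{\rm cb} \leq 1$ to produce $\Phi$ as the restriction of a unital completely positive map on $C^*(A)^1$. Granting this, the unitization of a completely positive contraction and the concluding restriction are entirely routine.
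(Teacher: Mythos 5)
Your proposal is correct and follows essentially the same route as the paper: extend $P$ to a completely positive completely contractive map on $C^*(A)$, unitize that to a unital completely positive map on $C^*(A) + \Cdb I_H$, and restrict to $A^1$. The ``delicate step'' you grant (the norm-controlled completely positive extension) is precisely the content of \cite[Theorem 2.6]{BBS}, which the paper cites for exactly this purpose, and your unitization step is the paper's appeal to \cite[Lemma 3.9]{CECPL}.
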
  \begin{proof}   Suppose that $P : A \to A \subset B(H)$ is the projection.
By \cite[Theorem 2.6]{BBS}, $P$ extends uniquely to a
completely positive completely contractive map $C^*(A) \to B(H)$. 
 By \cite[Lemma 3.9]{CECPL} it extends further
to a unital completely positive map $C^*(A) + \Cdb I_H \to B(H)$.   The restriction of the latter map to $A + \Cdb I_H$ may be
viewed as a unital (real
completely positive) completely contractive map
on the unitization $A^1 \to A^1$, and it is evidently a projection if $P$ was a projection.  \end{proof}

The previous result gives a way to `reduce to the unital case'.   However this method does not seem to be helpful later in our paper when dealing with 
bicontractive or symmetric projections, and we will need a different `reduction to the  unital case'.

\begin{lemma} \label{ispos}  Let $P : A \to A$ be a real positive contractive
map on a unital operator algebra.  Then $0 \leq P(1) \leq 1$.
\end{lemma}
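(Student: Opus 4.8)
The plan is to write $x = P(1)$ and split the claim into showing, on the one hand, that $x$ is self-adjoint and positive, and on the other that $\Vert x \Vert \leq 1$. The latter is immediate from contractivity, since $\Vert x \Vert = \Vert P(1) \Vert \leq \Vert P \Vert \, \Vert 1 \Vert \leq 1$. For the former, the key observation I would make is that $1$ is not the only obvious real positive element in sight: for every real $s$ the scalar multiple $(1 + is)1$ lies in ${\mathfrak r}_A$, because its `real part' is $\tfrac{1}{2}[(1+is) + (1-is)]1 = 1 \geq 0$, that is $(1+is)1 + ((1+is)1)^* = 2 \cdot 1 \geq 0$. So rather than only using that $1$ is accretive, I would feed this entire one-real-parameter rotated family through $P$.

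Applying real positivity to $(1+is)1 \in {\mathfrak r}_A$ yields $(1+is)x = P((1+is)1) \in {\mathfrak r}_A$ for every $s \in \Rdb$, that is $(1+is)x + (1-is)x^* \geq 0$ in $B(H)$. Writing $x = h + ik$ with $h = \tfrac{1}{2}(x + x^*)$ and $k = \tfrac{1}{2i}(x - x^*)$ self-adjoint, this inequality expands to $2h - 2sk \geq 0$, i.e.\ $h - sk \geq 0$ for all real $s$. For a fixed vector $\xi \in H$ the quantity $\langle h\xi, \xi \rangle - s \langle k\xi, \xi \rangle$ is then a nonnegative affine function of $s \in \Rdb$, which forces $\langle k\xi, \xi \rangle = 0$; as $\xi$ is arbitrary and $k$ is self-adjoint, $k = 0$. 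Hence $x = h$ is self-adjoint, and the special case $s = 0$ (merely $1 \in {\mathfrak r}_A$, so $x + x^* = 2h \geq 0$) now reads $x = h \geq 0$.

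Finally, combining $x \geq 0$ with $\Vert x \Vert \leq 1$ gives $0 \leq x \leq 1$, since a positive operator of norm at most one has spectrum in $[0,1]$. This completes the chain $0 \leq P(1) \leq 1$. The only step needing foresight is the first: noting that $1$ is accretive gives just that $P(1)$ is accretive, which by itself does \emph{not} force self-adjointness, so one must test $P$ against the rotated family $(1+is)1$ to kill the imaginary part $k$ of $P(1)$. Everything else is routine. One mild point to bear in mind is that the inequalities $\geq 0$ are taken in $B(H)$ (equivalently in $C^*(A)$) via the given representation, matching the definition of ${\mathfrak r}_A$; since $h$ and $k$ need not lie in $A$, this is where the ambient $C^*$-structure is used.
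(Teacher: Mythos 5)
Your proof is correct. It differs in presentation from the paper's, which is a two-line argument: the paper restricts $P$ to the diagonal $\Delta(A) = A \cap A^*$, a unital $C^*$-algebra, observes that this restriction is real positive, and invokes the proof of a cited result (\cite[Theorem 2.4]{BBS}) that real positive maps there are positive, whence $0 \leq P(1) \leq 1$. You instead reprove, self-containedly and only at the single element $1$, exactly the special case of that cited fact which is needed: testing $P$ against the rotated family $(1+is)1 \in {\mathfrak r}_A$, $s \in \Rdb$, to force the skew part of $P(1)$ to vanish, then using $s=0$ for positivity and contractivity for $P(1) \leq 1$. The rotation trick you use is essentially what lies inside the proof the paper cites, so the mathematical kernel is the same; what your version buys is that the lemma becomes elementary and self-contained, with the role of real positivity (killing the imaginary part $k$ of $P(1)$) made completely explicit, while the paper's version is shorter and places the lemma within the general principle that real positive maps on $C^*$-algebras are positive.
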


\begin{proof} 
The restriction of $P$ to $\Delta(A) = A \cap A^*$ is
real positive.  Hence it is positive by the proof of
\cite[Theorem 2.4]{BBS}.  So  $0 \leq P(1) \leq 1$.
\end{proof}

\begin{lemma} \label{ceproj}  Suppose that $E$ is a completely contractive completely positive projection on an operator system $X$.  Then the range of $E$,
with its usual matrix norms,
 is an operator system with matrix cones $E_n(M_n(X)_+) = M_n(X)_+ \cap 
{\rm Ran}(E_n)$, 
and unit $E(1)$.
\end{lemma}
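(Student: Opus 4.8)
The plan is to verify that $R := {\rm Ran}(E)$, equipped with the matrix cones $C_n := M_n(X)_+ \cap {\rm Ran}(E_n)$ and distinguished element $E(1)$, satisfies the axioms of an abstract operator system in the sense of Choi--Effros (\cite{CE}, \cite{Paul}), and then to check that the resulting operator-system matrix norms coincide with those inherited from $X$. I would begin with some preliminaries. Since $E$ is completely positive it is in particular positive, hence $*$-linear (a positive map between operator systems preserves adjoints, because any self-adjoint element is a difference of two positive elements using the order unit). Thus $R$ is a $*$-closed subspace of $X$, and $M_n(R) = {\rm Ran}(E_n)$ is $*$-closed for each $n$. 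The asserted description of the cones is then quick: the inclusion $E_n(M_n(X)_+) \subseteq M_n(X)_+ \cap {\rm Ran}(E_n)$ is immediate from complete positivity, while the reverse holds since any $b \in M_n(X)_+ \cap {\rm Ran}(E_n)$ satisfies $b = E_n(b)$ by idempotence.

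Next I would verify the matrix-ordered $*$-vector space axioms for $(R, \{C_n\})$. Each $C_n$ is a proper cone of self-adjoint elements because it is the intersection of the proper cone $M_n(X)_+$ with the $*$-closed subspace $M_n(R)$; and the compatibility $\alpha^* C_n \alpha \subseteq C_m$ for scalar $\alpha \in M_{n,m}$ holds since $\alpha^* M_n(X)_+ \alpha \subseteq M_m(X)_+$ in $X$, while $M_n(R)$ is automatically closed under such congruences. For the order unit, $E(1) = E(1)^* \geq 0$ lies in $C_1$; and given self-adjoint $b \in M_n(R)$, choosing $t>0$ with $t 1_n + b \in M_n(X)_+$ (possible as $1_X$ is a matrix order unit for $X$) and applying $E_n$ gives $t\,(I_n \otimes E(1)) + b \in C_n$, using $E_n(1_n) = I_n \otimes E(1)$ and $E_n(b) = b$. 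Hence $E(1)$ is a matrix order unit. For the Archimedean property I would invoke contractivity: $0 \leq E(1) \leq \|E(1)\|\, 1_X \leq 1_X$, so $e_n := I_n \otimes E(1) \leq 1_n$. Thus if $t\, e_n + b \in C_n$ for all $t>0$, then $t 1_n + b \geq 0$ for all $t>0$, whence $b \in M_n(X)_+ \cap M_n(R) = C_n$ by the Archimedean property of $1_X$. By the abstract characterization of operator systems, $(R, \{C_n\}, E(1))$ is an operator system.

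The step I expect to require the most care is reconciling the operator-system norms with the norms inherited from $X$, since the new unit $E(1)$ is in general strictly dominated by $1_X$. For self-adjoint $a \in M_n(R)$ the two self-adjoint order-norms, computed with $e_n$ and with $1_n$ respectively, in fact coincide: if $t 1_n \pm a \geq 0$ then applying $E_n$ gives $t e_n \pm a \geq 0$, while if $t e_n \pm a \geq 0$ then adding $t(1_n - e_n) \geq 0$ yields $t 1_n \pm a \geq 0$; taking the infimum over such $t$ in each case forces the two norms to be equal, and the latter is exactly the inherited norm of $a$ in $X$. For a general $x \in M_n(R)$, both the operator-system norm and the inherited norm are recovered from the self-adjoint norm of $\begin{pmatrix} 0 & x \\ x^* & 0 \end{pmatrix} \in M_{2n}$, so the preceding self-adjoint computation shows the inherited matrix norms agree with the operator-system norms at every level.

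This identifies $R$ as an operator system with matrix cones $C_n = E_n(M_n(X)_+) = M_n(X)_+ \cap {\rm Ran}(E_n)$ and unit $E(1)$, carrying precisely its inherited matrix norms, as claimed. The only genuinely nonroutine ingredient is the norm-matching argument of the third paragraph, which hinges on the two observations that $E(1) \leq 1_X$ and that $E_n$ may be applied to the defining order inequalities; the remaining axiom checks are formal consequences of complete positivity and idempotence.
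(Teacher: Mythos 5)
Your proof is correct and takes essentially the same route as the paper: both verify the Choi--Effros axioms for ${\rm Ran}(E)$ with the matrix cones $M_n(X)_+\cap{\rm Ran}(E_n)$ and unit $E(1)$, and both reconcile the order-unit norms with the inherited norms via a $2\times 2$ matrix trick resting on the two facts that $E(1)\le 1_X$ and that $E_n$ may be applied to order inequalities. The only notable difference is in the norm-matching details: the paper tests positivity of the matrix with diagonal entries $E(1)$ and off-diagonal entries $x,x^*$ (invoking a small operator-theory exercise for one direction), whereas you pass to the self-adjoint element with zero diagonal and off-diagonal entries $x,x^*$ and close the converse by simply adding $t(1_{2n}-e_{2n})\ge 0$, a marginally more elementary finish.
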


\begin{proof}  We will use the Choi-Effros characterization of operator systems
\cite{CE}.  Because ${\rm Ran}(E_n)$ is a $*$-subspace 
of $M_n(X)$,  with the inherited cone from $M_n(X)_+$, it is a partially ordered, matrix ordered,
Archimidean $*$-vector space with  proper cones.   If $x = x^*$
there exists a positive scalar $t$ with $-t 1 \leq x \leq t 1$, so that
$-t E(1) \leq x \leq t E(1)$.  So $E(1)$ is an order unit.
If $x \in {\rm Ran}(E)$ with $\Vert x \Vert_X \leq 1$ then  
$$\left[ \begin{array}{ccl}
1 & x \\
x^*   & 1
\end{array}
\right] \; \geq \; 0 $$
in $M_2(X)$.  Applying $E_2$, we deduce that 
$$\left[ \begin{array}{ccl}
E(1) & x \\
x^*   & E(1) 
\end{array}
\right] \; \geq \; 0,$$
so that the norm of $x$ is $\leq 1$ in the new `order unit norm'
(see \cite[p.\ 179]{CE}).  
Conversely, if the last norm is $\leq 1$,
or equivalently if the last centered equation holds,
then it is a simple exercise in operator theory that
$\Vert x \Vert_X \leq 1$, since $\Vert E(1) \Vert \leq 1$ and $E(1) \geq 0$.   
Thus the `order unit norm' coincides with the old norm.
Similarly at each matrix level.
By the Choi-Effros characterization of operator systems       
$({\rm Ran}(E), E(1))$ is 
an operator system with the given matrix cones,
and the order-unit matrix norms are the usual norms.
\end{proof} 

The following generalization of the Choi-Effros result 
referred to in the introduction, solves the
`completely contractive projection problem' in the category of approximately unital operator algebras and real completely
positive projections.   We remark that the case when also $P(1_A) = 1_A$ is implicit 
in the proof of \cite[Corollary 4.2.9]{BLM}.    

\begin{theorem} \label{tr}  Let $A$ be an 
approximately unital operator algebra,
and $P : A \to A$ a completely contractive projection which is
also real completely positive.
Then 
the range $B = P(A)$ is an
approximately unital operator algebra with product $P(xy)$.   
We have $$P(P(a) b) = P(P(a) P(b)) = P(a P(b)), \qquad a, b \in A.$$ 
In particular $P(P(1)^n)) = P(1)$ for all 
$n \in \Ndb$, if $A$ is unital.        With respect to 
the `multiplication' $P(xy)$,  
$A$ is an  bimodule over $B$ , and $P$ viewed as
a map $A \to B$ is a $B$-bimodule map
(with $B$ equipped with its new  product).  
If $A$ is unital then $P(1)$ is the
 identity for  the latter product.  Moreover (not assuming $A$ unital), $P$ extends
to a completely positive completely contractive projection on the injective envelope
$I(A)$. 
\end{theorem}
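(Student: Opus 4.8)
The plan is to reduce to the injective envelope at the very outset, and then import the Choi--Effros machinery. First I would invoke Proposition~\ref{cepro} together with the generalized Arveson extension theorem from the introduction: since $P$ is RCP and completely contractive on the approximately unital algebra $A$, it extends to a completely positive complete contraction $\tilde{P}$ on $C^*(A)$, and by injectivity of $I(A)$ this extends further to a completely positive complete contraction $E : I(A) \to I(A)$. The subtle point here is that I want $E$ to be a \emph{projection}, and idempotency need not be inherited automatically from extension. To fix this I would appeal to the rigidity of the injective envelope: consider $E^2$, which is another completely positive complete contraction $I(A) \to I(A)$ agreeing with $P^2 = P$ on $A$; if I can arrange $E$ to fix $A$ pointwise and map into a subspace on which rigidity applies, then $E^2 = E$. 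A cleaner route is to replace $E$ by a weak* cluster point of the Cesàro-type averages $\frac{1}{n}\sum_{k=1}^n E^k$, which is a completely positive complete contraction agreeing with $P$ on $A$ and which one checks is idempotent; its range contains $P(A) = B$.

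Once I have a completely positive completely contractive \emph{projection} $E$ on the unital $C^*$-algebra (or unital operator system) $I(A)$ extending $P$, I would apply Lemma~\ref{ceproj} to conclude that $\mathrm{Ran}(E)$ is an operator system with unit $E(1)$ and the inherited matrix cones. Then the Choi--Effros theorem \cite[Theorem 3.1]{CE} (strictly, its proof, applied in the injective $C^*$-algebra $I(A)$) shows that $\mathrm{Ran}(E)$ is a $C^*$-algebra under the product $E(xy)$, and that $E$ satisfies the module-type identities $E(E(a)b) = E(E(a)E(b)) = E(aE(b))$ for $a,b \in I(A)$. Restricting these identities to $a,b \in A$ and using $E|_A = P$ together with $P(A) \subseteq A$ gives the three displayed identities for $P$, and in particular the associativity of the product $P(xy)$ on $B = P(A)$.

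Next I would verify that $B = P(A)$ is an \emph{operator algebra} with product $P(xy)$, not merely a subspace of the $C^*$-algebra $\mathrm{Ran}(E)$. Since $B \subseteq \mathrm{Ran}(E)$ is closed under the product $E(xy) = P(xy)$ (the displayed identities show the product lands back in $B$), $B$ is a subalgebra of the $C^*$-algebra $(\mathrm{Ran}(E), E(\cdot\,\cdot))$, hence is an operator algebra with its inherited operator space structure, which is the one coming from $A$. To see $B$ is \emph{approximately unital}, I would take a real positive cai $(e_t)$ of $A$ (Read's theorem, cited in the introduction) and argue that $(P(e_t))$ serves as a cai for $B$ under the new product: the identities give $P(e_t) \cdot_B P(a) = P(P(e_t)P(a)) = P(e_t P(a))$, and since $P(a) \in A$ and $e_t$ is a cai one pushes $e_t P(a) \to P(a)$ in norm, then applies the contraction $P$. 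The bimodule statements over $B$ follow from the same displayed identities read with one variable ranging over $A$ and the other over $B$. In the unital case, $E(1) = P(1)$ is the $C^*$-identity of $\mathrm{Ran}(E)$, whence $P(1)$ is the identity for the product $P(xy)$, and $P(P(1)^n) = P(1)$ follows by induction from $P(1) \cdot_B P(1) = P(1)$.

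The main obstacle I anticipate is the very first step: securing a completely positive completely contractive extension of $P$ to $I(A)$ that is genuinely a \emph{projection}, rather than merely an idempotent-on-$A$ extension. Injectivity gives existence of a completely positive complete contraction, but idempotency is a global condition that extension does not respect, so the averaging (or rigidity) argument must be carried out carefully to ensure the cluster point is both a projection and still agrees with $P$ on all of $A$. Everything downstream is then a faithful transcription of the Choi--Effros argument inside the injective $C^*$-algebra $I(A)$, combined with the norm-density bookkeeping needed to transfer the cai and the module identities from $I(A)$ back down to $A$ and $B$.
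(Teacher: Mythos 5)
Your overall architecture (extend $P$ to a completely positive completely contractive \emph{projection} $E$ on $I(A)$, run the Choi--Effros argument on ${\rm Ran}(E)$, then restrict to $A$ and do the cai/bimodule bookkeeping) is the paper's, and you correctly isolate the one genuinely nontrivial step: idempotency of the extension. But neither of your two proposed fixes for that step works. The rigidity variant is incoherent as stated: an extension $E$ of $P$ cannot ``fix $A$ pointwise'' unless $P = I_A$, and the observation that $E^2$ and $E$ are both completely positive complete contractions agreeing with $P$ on $A$ gives nothing, because rigidity of $I(A)$ forces uniqueness only for extensions of the \emph{identity} map of $A$; completely contractive extensions of a general map $A \to I(A)$ are far from unique. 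The Ces\`aro-averaging variant has a compactness gap: $I(A)$ is an injective $C^*$-algebra but in general is not a dual space (e.g.\ $I(C[0,1])$ is a monotone complete algebra that is not a von Neumann algebra), so ``weak* cluster point of $\frac{1}{n}\sum_{k=1}^n E^k$'' is not defined, and there is no weak compactness to produce a point-weak cluster point either. Even if you embed $I(A) \subset B(H)$ and use point-weak* compactness there, the mean-ergodic argument only yields $F \circ E = F$ for a cluster point $F$; the remaining identities $E \circ F = F$ and $F^2 = F$ require pushing a weak* limit through $E$ or through $F$, i.e.\ weak*-to-weak* continuity (normality) of these maps, which you do not have. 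So the averaged map need not be idempotent, and moreover it need not map into $I(A)$ at all.

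The correct repair --- and what the paper actually does (explicitly in its second proof; its first proof quotes the proof of \cite[Corollary 4.2.9]{BLM}, which is the same device after unitizing via Proposition \ref{cepro}) --- is to run rigidity through the injective envelope of the \emph{range}, not of $A$: with $X = A + A^*$, $Y = B + B^*$, and $j : Y \to X$ the inclusion, extend the canonical completely positive extension $P' : X \to Y \subset I(B)$ of $P$ to a completely positive complete contraction $\tilde P : I(A) \to I(B)$, and extend $j$ to $\tilde j : I(B) \to I(A)$ by Arveson's theorem. Then $\tilde P \circ \tilde j$ is a completely positive complete contraction on $I(B)$ extending $P' \circ j = I_Y$, hence equals $I_{I(B)}$ by rigidity of $I(B)$; consequently $E = \tilde j \circ \tilde P$ satisfies $E^2 = \tilde j (\tilde P \tilde j) \tilde P = \tilde j \tilde P = E$ automatically, while $E|_A = P$. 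This one line of algebra replaces your entire averaging discussion, and everything downstream of it in your sketch (Lemma \ref{ceproj}, the Choi--Effros identities, the cai argument with $P(P(e_t)P(a)) = P(e_tP(a)) \to P(a)$, and the bimodule identities) then goes through as you describe. You would also need to say how the approximately unital case reduces to the unital one (the paper passes to $P^{**}$ on $A^{**}$, or unitizes and uses $I(A^1) = I(A)$ from \cite[Corollary 4.2.8]{BLM}), but that is minor compared with the idempotency gap.
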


\begin{proof}  We give two proofs, since they both use techniques the reader will
need to be familiar with in the rest of our paper.

Set $B = P(A)$.
Extend $P$ to a unital completely contractive projection $P^1$
 on $A^1$ by Proposition \ref{cepro}.   We may then use the 
proof of \cite[Corollary 4.2.9]{BLM}, which proceeds by extending 
$P$ to a unital (completely positive and)
 completely contractive projection $E$ on
$I(A^1)$.   It follows from the Choi-Effros result mentioned early in our introduction,
that Ran$(E)$ is a unital $C^*$-algebra with product
$E(xy)$, and $B$ with product $P(xy)$ is a unital  subalgebra
of this $C^*$-algebra.  We also have by the same Choi-Effros result
(or its proof) that $E(E(a) b) = E(E(a) E(b)) = E(a E(b))$
for all $a, b \in A$, giving the centered equation in the theorem statement.
This gives the first several
assertions of our theorem.  Note that 
$$P(P(e_t) P(a)) = P(e_t P(a)) \to P(P(a)) = P(a), \qquad a \in A, $$
if $(e_t)$ is a cai for $A$.  Similarly on the right, so that 
$(P(e_t))$ is a cai for $P(A)$ in its new product.   
That $A$ is actually a $B$-bimodule follows from the centered equation  in the theorem statement;  for example  because
$P(P(P(a)P(b))c)$ equals
$$P(P(P(a)P(b)) P(c)) = P(P(a)P(b) P(c)) =  P(P(a)P(P(b) P(c))) =  P(P(a)P(P(b)c)).$$
The centered equation in the theorem statement
is just saying  that $P$ is a $B$-bimodule map for the given products.
The final assertion about extending to $I(A)$ is easy from the above
in the case that $A$ is unital; the other case we will do below.   

For the second proof, first suppose that $A$ is unital.   
Let $B = P(A)$, and set $X = A + A^*, Y = B + B^*$ and $v = P(1)$.
By \cite[Theorem 2.6]{BBS}, $P$ extends uniquely to a 
completely positive completely contractive map $P'$ on $X$.
Since $X = A + A^*$ this map is uniquely determined,
it must be $a_1 + a_2^* \mapsto P(a_1) + P(a_2)^*$, 
a projection on $X$ with range $Y$. 
By Lemma \ref{ceproj} $(Y,v)$ is an operator system with positive cone
$P(X_+)$.  
Let  $j : Y \to X$ be the inclusion map.  Then 
extend $P'$ to a 
completely positive complete contraction $\tilde{P} : I(A) \to I(B)$ 
by \cite[Theorem 2.6]{BBS}.  Extend $j$ to a 
completely positive complete contraction
$\tilde{j} : I(B) \to I(A)$  
by Arveson's extension theorem \cite{Ar}.   Then $\tilde{P} \circ \tilde{j}$
equals the identity map on $I(B)$ by rigidity of the injective
envelope, since $P \circ j = I_B$.  Thus $E = \tilde{j} \circ \tilde{P}$
is a completely positive completely contractive
projection on $I(A)$ extending $P$.   We deduce just as in the last paragraph that
Ran$(E)$ is a unital $C^*$-algebra with product
$E(xy)$, $B$ with product $P(xy)$ is a unital  subalgebra
of this $C^*$-algebra, and 
 $P(P(a) b) = P(P(a) P(b)) = P(a P(b)).$

Finally, if $A$ is nonunital but  
approximately unital, then $P^{**}$ is
a completely contractive projection on $A^{**}$, which is
also real positive by the proof of the main theorem in
\cite[Section 2]{BBS}.  By the unital case,
$P^{**}(xy)$ is an operator algebra product on
$P^{**}(A^{**})$, with unit $v = P^{**}(1)$.   Hence by restriction
$P(xy)$ is an operator algebra product on
$P(A)$, and the centered equation in the theorem statement holds on $A$, as does the 
assertions about bimodules.
Note that $P^{**}(A^{**}) = 
(P(A))^{\perp \perp}$, so that $P^{**}(A^{**}) \cong P(A)^{**}$.
So $P(A)^{**}$ is unital, and hence $P(A)$ is approximately unital (or this may be seen directly
using the centered equation in the theorem statement).
  Also, since $v = P^{**}(1)$ acts as an identity on
$P(A)$ in the new product, 
we can identify $P(A) + \Cdb v$, as a unital operator space,
 with the 
unitization of $P(A)$ with its new operator algebra product.
Then the restriction $r$ of $P^{**}$ to $A + \Cdb 1_{A^{**}}$
can be viewed as a real completely positive completely contractive 
projection on the unitization $A^1$.
By the last paragraph, $r$ extends to a 
completely positive completely contractive
projection on $I(A^1)$.  However, $I(A^1) = I(A)$
by e.g.\ \cite[Corollary 4.2.8]{BLM}.
\end{proof}

{\bf Remarks.} 
1) \ Thus the category of approximately unital operator algebras and real completely
positive projections forms a `projectively stable category' in the sense of Friedman and
Russo (see Neal and Russo \cite[p.\ 295--296]{NR}, and e.g.\ \cite{CPP}).  Namely, if ${\mathcal B}$
is the category of Banach spaces with morphisms being the
contractive projections, a subcategory ${\mathcal S}$ of ${\mathcal B}$
is {\em projectively stable} if ${\mathcal S}$ is closed under images of morphisms.
That is, for an object $E$ and morphism $\varphi : E \to E$ in ${\mathcal S}$, $\varphi(E)$ is
again an object in ${\mathcal S}$ (although not necessarily a `subobject' with respect
to the full structure of objects in ${\mathcal S}$).
For example, the subcategory of unital $C^*$-algebras and completely positive unital
projections is projectively stable, by the theorem of Choi and Effros used earlier.   Other 
projectively stable categories are listed in the last references; e.g.\ 
  the subcategory of TRO's and completely  contractive projections
is projectively stable by Youngson's theorem \cite{Y}.     In the cited pages of \cite{NR}
the concept of a `projectively rigid category' is discussed.  The associated question for us
would be if the preduals of dual operator algebras are projectively (completely) rigid in their sense.
However the answer to this is in the negative, since the category of Banach or operator spaces
is not projectively (completely) rigid, and then one can play the ${\mathcal U}(X)$ trick (described
above Proposition \ref{one}  below)  to 
answer the operator algebra question.

\medskip
  2) \ 
If $A$ is unital and $C$ is the 
$C^*$-subalgebra of $I(A)$ generated by $P(A)$, then the 
map $\tilde{P}$ in the proof restricts to a 
$*$-homomorphism from $C$ onto $C^*_{\rm e}(B)$, the latter viewed as a
subalgebra of $I(B)$ (or as a $C^*$-subalgebra of the space Ran$(E)$ in 
the proof, with its `Choi-Effros product').   See e.g.\ 
\cite[Theorem 1.3.14 (3)]{BLM}.   

\medskip

\begin{lemma} \label{zelo}  Let $A$ be a unital operator algebra,
and $P : A \to A$ a contractive projection, such that ${\rm Ran}(P)$ contains an orthogonal  
projection $q$ with $P(A) = q P(A) q$.   Then $q = P(1_A)$.
\end{lemma}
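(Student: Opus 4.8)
The plan is to set $v = P(1_A)$ and prove directly that $v = q$, using only contractivity and localizing everything to the corner cut out by $q$. First I would record the two elementary consequences of the hypotheses: since $q \in {\rm Ran}(P)$ and $P$ is idempotent, $P(q) = q$; and since $v = P(1_A) \in P(A) = qP(A)q$, we have $v = qvq$. Representing $A$ completely isometrically and unitally on a Hilbert space $H$, so that $1_A = I_H$ and $q$ is a genuine orthogonal projection, the relation $v = qvq$ says that $v$ annihilates $(1-q)H$ and has range in $qH$; I would therefore view $v$ as an operator on the Hilbert space $qH$, whose identity operator is precisely $q$.

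Next I would extract two norm inequalities purely from contractivity of $P$, and here the key idea is to test $P$ on the selfadjoint unitary $2q - 1_A$ rather than on $1_A$ alone. Contractivity gives $\Vert v \Vert = \Vert P(1_A) \Vert \leq 1$. Since $q$ is a projection, $2q - 1_A$ is a selfadjoint unitary, so $\Vert 2q - 1_A \Vert = 1$, and applying $P$ together with $P(q) = q$ yields $P(2q - 1_A) = 2q - v$, whence $\Vert 2q - v \Vert \leq 1$. Because $v = qvq$ we also have $q(2q - v)q = 2q - v$, so, viewed on $qH$, the element $2q - v$ becomes $2\,I_{qH} - v$ and this last inequality reads $\Vert 2\,I_{qH} - v \Vert \leq 1$.

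The conclusion then follows from a numerical-range argument on $B(qH)$. The bound $\Vert v \Vert \leq 1$ forces the numerical range $W(v)$ into the closed unit disc, while $\Vert 2\,I_{qH} - v \Vert \leq 1$ forces $W(v)$ into the closed disc of radius $1$ centered at $2$; these two discs meet only at the point $1$, so $W(v) = \{1\}$. This means $\langle (v - I_{qH})\xi, \xi \rangle = 0$ for all $\xi \in qH$, which on a complex Hilbert space gives $v = I_{qH}$, that is $v = q$. (Equivalently, one may note that $q = \tfrac{1}{2}(v + (2q - v))$ exhibits the identity of $B(qH)$ as the midpoint of two elements of its closed unit ball, and the identity is an extreme point of that ball.) The degenerate case $q = 0$ is trivial, since then $P(A) = \{0\}$ and $v = 0 = q$. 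The step needing the most care is the reduction to $B(qH)$: one must check that the norms computed in $A$ match those of the compressed operators on $qH$, which is immediate once $v = qvq$ and $2q - v = q(2q - v)q$ are in place. I would emphasize that, unlike Lemma \ref{ispos}, this argument uses no real-positivity hypothesis; contractivity alone suffices because testing on the symmetry $2q - 1_A$ collapses the two disc constraints to a single point.
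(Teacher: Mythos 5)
Your proof is correct, and at its core it is the same argument as the paper's. The paper applies $P$ to $q \pm (1-q)$, i.e.\ to $1$ and to the symmetry $2q-1$, obtaining $\Vert q \pm P(1-q) \Vert \leq 1$; since $P(1-q) = P(1) - q$ lies in the corner $qAq$ and $q$ is the identity of the unital Banach algebra $qAq$, hence an extreme point of its unit ball, it concludes $P(1-q) = 0$ and so $P(1) = q$. Your two inequalities $\Vert v \Vert \leq 1$ and $\Vert 2q - v \Vert \leq 1$ are exactly these, since $v = q + P(1-q)$ and $2q - v = q - P(1-q)$, and your parenthetical midpoint/extreme-point remark is essentially verbatim the paper's finish. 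The only genuine difference is that your main line replaces the appeal to abstract extremality by a numerical-range computation in $B(qH)$: the discs $\overline{D}(0,1)$ and $\overline{D}(2,1)$ meet only at $1$, forcing $W(v) = \{1\}$ and hence $v = I_{qH}$ by polarization. That step is a correct, self-contained substitute; it buys independence from the cited Banach-algebra fact (extremality of the identity), at the cost of localizing to a concrete Hilbert-space corner and checking that the norms of $v$ and $2q-v$ agree with those of their compressions, which you do. The paper's version works directly in $qAq$ and is shorter, but the two routes are mathematically the same reduction with different last words.
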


\begin{proof}   We have $\Vert q \pm (1-q) \Vert \leq 1$, so that $\Vert q \pm P(1-q) \Vert \leq 1$.
Since $P(A) = q P(A) q$, and $q$ is an extreme point of the 
unit ball of $qAq$ (the identity is an extreme point of the 
unit ball of any unital Banach algebra), we have that $P(1-q) = 0$.  Thus $P(1) = q$. 
\end{proof}  

The following `reduction to the case of unital maps'  works under a certain condition which will be seen to be
automatic in the setting found in 
the next Sections of the paper.

\begin{proposition} \label{tr2}  Let $A$ be an approximately 
unital operator algebra,
and $P : A \to A$ a completely contractive projection.   Then ${\rm Ran}(P^{**})$ contains an orthogonal 
projection $q$ such that $P(A) = q P(A) q$ iff  $P^{**}(1)$ is a projection.  
In this case $q = P^{**}(1)$, and ${\rm Ran}(P)$ is an approximately
 unital operator algebra with product $P(xy)$, and its bidual has identity $q$.
Also, $P$ is real completely positive, all the conclusions of Theorem {\rm \ref{tr}} hold, $q$ is an open projection 
for $A^{**}$ in the sense of {\rm  \cite{BHN}}, and
$$P(a) = q P(a)q  = P^{**}(qaq), \qquad a \in A,$$ (and we can replace $P^{**}$ by $P$ here if $A$
is unital).   Hence $P(A) = q P(A) q = P^{**}(qAq)$, and 
$P$ `splits' as the sum of  the zero map on $q^\perp A + A q^\perp + q^\perp A q^\perp$, and
a real completely positive completely contractive projection $P'$ on $qAq$ with range equal to 
$P(A)$.  This  projection $P'$ on $qAq$ is unital if $A$ is unital.   
\end{proposition}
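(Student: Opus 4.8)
The plan is to push everything to the bidual, where $A^{**}$ is unital, prove the two ``support'' identities there using the completely positive extension that real positivity supplies, and then read off the structural assertions from Theorem \ref{tr}. Write $E = P^{**}$, a completely contractive projection on the unital operator algebra $\mathcal{A} = A^{**}$, and set $v = E(1) = P^{**}(1)$. For the easy implication, suppose ${\rm Ran}(P^{**})$ contains an orthogonal projection $q$ with $P(A) = qP(A)q$. Since two-sided multiplication by a fixed element of $A^{**}$ is weak* continuous and ${\rm Ran}(E) = \overline{P(A)}^{\,w*}$ (as in the proof of Theorem \ref{tr}), each $r \in {\rm Ran}(E)$ is a weak* limit of elements $qaq$, whence $r = qrq$; thus ${\rm Ran}(E) = q\,{\rm Ran}(E)\,q$. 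Lemma \ref{zelo}, applied to the contractive projection $E$ on the unital algebra $\mathcal{A}$, then forces $q = E(1) = v$, so in particular $v$ is a projection. This gives one direction and the identification $q = P^{**}(1)$.

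For the converse I would assume $v = E(1)$ is an orthogonal projection and build the splitting with $q = v$; here the real positivity of $P$ enters decisively. By the generalized Arveson extension theorem of \cite{BBS}, $E$ extends to a completely positive map $\tilde{E}$ on a $C^{*}$-algebra $\mathcal{B} \supseteq \mathcal{A}$ with $\tilde{E}(1) = v$. Applying the $2$-positive map $\tilde{E}_2$ to $\left[\begin{smallmatrix} 1 & x \\ x^{*} & x^{*}x \end{smallmatrix}\right] \geq 0$ yields $\left[\begin{smallmatrix} v & \tilde{E}(x) \\ \tilde{E}(x)^{*} & \tilde{E}(x^{*}x) \end{smallmatrix}\right] \geq 0$, and because the $1$-$1$ corner $v$ is a projection, positivity of this $2\times 2$ operator matrix forces the column-range inclusion $v\,\tilde{E}(x) = \tilde{E}(x)$; the symmetric computation from $\left[\begin{smallmatrix} xx^{*} & x \\ x^{*} & 1 \end{smallmatrix}\right]$ gives $\tilde{E}(x) = \tilde{E}(x)\,v$. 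Hence $\tilde{E}(x) = v\,\tilde{E}(x)\,v$ for all $x \in \mathcal{B}$, and restricting to $\mathcal{A}$ gives $E(x) = vE(x)v$; in particular ${\rm Ran}(E) \subseteq v\mathcal{A}v$, so $P(A) = vP(A)v$ with $q = v$. I expect this corner containment to be the crux: it fails for completely contractive projections that are not real positive (e.g.\ $x \mapsto e_{11}x$ on $M_2$ has $P^{**}(1)$ a projection but range not in the corner), so the completely positive extension is precisely the ingredient that rescues it.

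The remaining half $E(x) = E(vxv)$ of the displayed identity I would get from the multiplicative domain of $\tilde{E}$. Since $v = E(1) \in {\rm Ran}(E)$ we have $\tilde{E}(v) = E(E(1)) = v$, so $\tilde{E}(1-v) = 0$ and therefore $\tilde{E}\big((1-v)^{*}(1-v)\big) = \tilde{E}(1-v) = 0 = \tilde{E}(1-v)^{*}\tilde{E}(1-v)$; thus $1-v$ lies in the multiplicative domain of the completely positive map $\tilde{E}$. The bimodule property then gives $\tilde{E}((1-v)x) = \tilde{E}(x(1-v)) = 0$, whence $\tilde{E}(x) = \tilde{E}(vxv)$, and restricting yields $P(a) = P^{**}(vav)$ for $a \in A$ (equal to $vP(a)v$ by the previous paragraph). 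At this point Theorem \ref{tr} applies to the real completely positive completely contractive projection $P$: its range is an approximately unital operator algebra under the product $P(xy)$, and its bidual ${\rm Ran}(P^{**})$ has identity exactly $v = P^{**}(1)$, as required.

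Finally I would assemble the structural conclusions. The formula $P(a) = vP(a)v = P^{**}(vav)$ shows at once that $P$ annihilates $v^{\perp}A + Av^{\perp} + v^{\perp}Av^{\perp}$ and restricts to a real completely positive completely contractive projection $P'$ on the corner, with range $P(A)$; when $A$ is unital, $v = P(1) \in A$, the algebra $vAv$ is unital with identity $v$, and $P'(v) = vP(v)v = v$ shows $P'$ is unital. For the openness of $v$ I would take a real positive cai $(e_t)$ for $A$ (Read's theorem), observe that $P(e_t) \to P^{**}(1) = v$ weak* with each $P(e_t)$ real positive and in $A$, and invoke the support/open-projection theory of \cite{BHN} (cf.\ \cite{BRord}) to conclude $v$ is open, so that the corner is a genuine HSA of $A$. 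The only genuinely delicate steps are the corner containment of the second paragraph (where real positivity is indispensable) and this last identification of $v$ as an open projection in the nonunital setting; the rest is bookkeeping on top of Theorem \ref{tr} and Lemma \ref{zelo}.
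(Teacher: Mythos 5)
Your forward direction is exactly the paper's: weak* density of $P(A)$ in ${\rm Ran}(P^{**})$ plus Lemma \ref{zelo} applied to $P^{**}$ on the unital algebra $A^{**}$, giving $q = P^{**}(1)$. The problem is your converse, which is circular as a proof of the proposition as stated. The only hypothesis here is that $P$ is a completely contractive projection; real complete positivity is listed among the \emph{conclusions}. Yet your very first move in that direction is to invoke the generalized Arveson extension theorem of \cite{BBS} to produce the completely positive extension $\tilde{E}$ of $E = P^{**}$ --- and that theorem requires $E$ to be real completely positive, which at that point you do not have. Everything downstream (the $2\times 2$ positivity trick forcing $\tilde{E}(x) = v\tilde{E}(x)v$, the multiplicative-domain step giving $E(x) = E(vxv)$) is correct \emph{granted} the extension, and indeed closely parallels the paper's own Kadison--Schwarz and Choi multiplicative-domain computations; but the paper reaches those computations only after first manufacturing real complete positivity from scratch: it asserts that ${\rm Ran}(Q)$ is a unital operator space with unit $q$ (sitting in $qA^{**}q$), extends $Q$ to a unital completely positive map on $A^{**} + (A^{**})^*$ by \cite[Lemma 1.3.6]{BLM}, and deduces RCP from \cite[Lemma 2.2]{BBS}. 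You skipped the one step that the statement actually demands.

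There is a further twist that you half-noticed but did not pursue. Your parenthetical example $x \mapsto e_{11}x$ on $M_2$ does more damage than you credit it with: it is a completely contractive projection with $P(1) = e_{11}$ an orthogonal projection, while the only orthogonal projections in ${\rm Ran}(P) = {\rm span}\{e_{11}, e_{12}\}$ are $0$ and $e_{11}$, and neither satisfies $P(A) = qP(A)q$. So it is a counterexample to the ``if'' direction of the equivalence exactly as stated, not merely a warning that your corner-containment step needs real positivity; and the same example shows that the paper's assertion ``${\rm Ran}(Q)$ is a unital operator space (in $qA^{**}q$)'' does not follow from the stated hypotheses either. In other words, that direction of the proposition requires a supplementary hypothesis --- for instance real complete positivity of $P$ (which is what you silently assumed), or ${\rm Ran}(P^{**}) \subset qA^{**}q$. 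Having found the example, the right move was to flag the statement (and the corresponding step of any proof) as needing repair, rather than to quietly prove a different, corrected assertion. A minor point by comparison: your argument for openness of $q$ (push a cai of $A$ through $P$, landing in $P(A) \subset qA^{**}q$ and converging weak* to $q$) is fine and essentially equivalent to the paper's, which instead uses a cai for $P(A)$ in its new product via $e_t = P(e_t)q \to q$ weak*.
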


\begin{proof}  
 Let $Q = P^{**}$,  a completely contractive projection  on $A^{**}$.  We can replace $Q$ by $P$ below
if $A$ is unital.   
If $P(A) = q P(A) q$ for a projection $q$ then $Q(A^{**}) = q Q(A^{**})  q$ by standard weak* approximation
arguments, so by the lemma, $Q(1) = q$.   Conversely, suppose that  $Q(1) = q$ is a projection.  Then 
$Q(q^\perp) = 0$.    Note that Ran$(Q)$ is a unital operator space (in $qA^{**}q$).  
So $Q$, and hence also $P$,  is real completely positive by \cite[Lemma 2.2]{BBS}, 
since it extends by e.g.\ \cite[Lemma 1.3.6]{BLM} to a completely positive unital map from $X + X^*$ onto $Y + Y^*$
where $X = A^{**}$ and $Y = Q(A^{**})$.  By extending $Q$ further to a completely positive  completely contractive  map on  a containing
$C^*$-algebra, and using the Kadison-Schwarz inequality, we have
$$Q(a q^\perp)^* Q(a q^\perp) \leq Q(q^\perp a^*a q^\perp) \leq Q(q^\perp) = 0, \qquad a \in {\rm Ball}(A^{**}).$$
Thus $Q(a) = Q(aq)$ for all $a \in A^{**}$, and similarly $Q(a) = Q(qa)$.  
Also $Q(q)^2 = Q(q)$, and so $P(A) =  Q(qAq) =  q P(A) q$ by Choi's multiplicative
domain trick (the latter is usually stated for unital maps, but the general case may be reduced to this
using \cite[Lemma 3.9]{CECPL}).

The rest follows from Theorem \ref{tr}
and its proof, with the exception of  $q$ being an open projection 
for $A^{**}$.   To see this,
if  $(e_t)$ is a cai for $P(A)$ with its new product  then using 
some of the facts here and in Theorem \ref{tr}  we have
$e_t = P(e_t q) = P(e_t) q \to q$ weak*.  
\end{proof}

{\bf Remarks.} 1) \ Note that even  a completely contractive completely positive projection on a unital $C^*$-algebra need not have
$P(1)$ a projection.  To see this, choose a  norm $1$ element $x \neq 1$ in $A_+$ and a state $\varphi$ with 
$\varphi(x) = 1$, and consider $P = \varphi(\cdot) \, x$.

\medskip

2) \   Unfortunately the projection $q$ here need not be central, even if $P$ is completely bicontractive.  See the next example.  

\begin{example} \label{exoo}  Consider  the canonical projection of the upper triangular 
matrices $A$  onto $\Cdb E_{11}$.  This is a completely bicontractive projection
(which is also completely bicontractive, completely hermitian, etc), but it does not extend to a
completely bicontractive projection on its $C^*$-envelope (or 
injective envelope) $M_2$.  In this case note that  $A+A^* = C^*(A) = C^*_{\rm e}(A) = I(A)$.
On the positive side, the range of this projection 
is a subalgebra of $A$.   \end{example}

\begin{corollary} \label{acp}  Let $A$ be an approximately  
unital operator algebra, with an approximately  
unital subalgebra $B$ which is the range of a completely contractive projection $P$ on $A$.
Then  $P$ is real completely positive, and all the conclusions of Theorem {\rm \ref{tr}} hold.
Hence $P$ is a conditional expectation: $P(a)b = P(ab)$ and $bP(a) = P(ba)$
for all $b \in B =  P(A)$ and $a \in A$.  It follows that 
$(P(e_t))$ is a cai for $B$ for any cai $(e_t)$ of $A$.   
\end{corollary}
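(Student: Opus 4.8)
The plan is to deduce everything from Proposition \ref{tr2} and Theorem \ref{tr}, the one real task being to verify the hypothesis of Proposition \ref{tr2}: that ${\rm Ran}(P^{**})$ contains an orthogonal projection $q$ with $P(A) = q P(A) q$. The natural candidate is $q = 1_{B^{**}}$, the identity of the bidual of the approximately unital algebra $B$. First I would recall that since $B$ is approximately unital, $B^{**}$ has an identity of norm $1$; realizing $B \subseteq A$ gives $B^{**} = B^{\perp\perp} \subseteq A^{**}$ as a weak* closed subalgebra, so $q = 1_{B^{**}}$ is a norm-one idempotent in a $C^*$-algebra containing $A^{**}$ and is therefore an orthogonal projection. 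Next, since $P^{**}$ is the weak* continuous idempotent extension of $P$ with ${\rm Ran}(P^{**}) = (P(A))^{\perp\perp} = B^{**}$ (as noted in the proof of Theorem \ref{tr}), we have $q \in {\rm Ran}(P^{**})$. Finally, because $q$ is the identity of $B^{**} \supseteq B = P(A)$, for each $b \in P(A)$ we have $qbq = b$, so $P(A) = q P(A) q$.

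With the hypothesis verified, Proposition \ref{tr2} immediately yields that $q = P^{**}(1)$, that $P$ is real completely positive, and that all the conclusions of Theorem \ref{tr} hold; in particular the centered identity $P(P(a)b) = P(P(a)P(b)) = P(aP(b))$ holds for $a, b \in A$. To obtain the conditional expectation property I would specialize this identity to $b \in B$, where $P(b) = b$. Then $P(ab) = P(aP(b)) = P(P(a)b)$; since $P(a)$ and $b$ both lie in the subalgebra $B$, their product $P(a)b$ lies in $B = {\rm Ran}(P)$ and so is fixed by $P$, giving $P(ab) = P(a)b$. The symmetric computation, using $P(ba) = P(bP(a))$ together with $bP(a) \in B$, gives $P(ba) = bP(a)$. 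The key point enabling this, and the feature distinguishing the present situation from Theorem \ref{tr}, is that $B$ is a genuine subalgebra of $A$, so the `new' Choi--Effros product $P(xy)$ agrees with the original product of $A$ on $B$.

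For the last assertion, given a cai $(e_t)$ of $A$ I would use the conditional expectation property just established: for $b \in B$ we have $P(e_t)\, b = P(e_t b) \to P(b) = b$, since $e_t b \to b$ in $A$ and $P$ is norm continuous, and symmetrically $b\, P(e_t) \to b$; as $\Vert P(e_t) \Vert \leq 1$, the net $(P(e_t))$ is a cai for $B$. The only genuine obstacle is the verification that $q = 1_{B^{**}}$ is an orthogonal projection lying in ${\rm Ran}(P^{**})$ with $P(A) = q P(A) q$; once this is in hand the corollary is a formal consequence of the two cited results. I expect this verification to be routine given the standard fact that a norm-one idempotent in a $C^*$-algebra is self-adjoint, together with the identification ${\rm Ran}(P^{**}) = B^{**}$.
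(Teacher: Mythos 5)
Your proof is correct and takes essentially the same route as the paper: the paper's proof likewise considers $P^{**}$, whose range is $B^{**}$ with its norm-one identity $q$, and applies Proposition \ref{tr2} to conclude that $P$ is real completely positive and that all conclusions of Theorem \ref{tr} hold. Your explicit verification that $q = 1_{B^{**}}$ is an orthogonal projection in ${\rm Ran}(P^{**})$ with $P(A) = qP(A)q$, and your derivation of the conditional expectation and cai assertions from the centered identity, merely spell out steps the paper leaves to the reader.
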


\begin{proof}   Consider $P^{**}$,  a completely contractive projection on $A^{**}$ with 
range $B^{**}$.  Of course $B^{**}$ has an identity of norm $1$ as we said in the introduction.
 By Proposition \ref{tr2},   $P^{**}$  is real completely positive, and hence so is its
restriction $P$.  
   The remaining assertions 
follow e.g.\ from Proposition \ref{tr2}, except for the last assertion
which is an easy consequence of
 the second last assertion.   \end{proof}

The last result, which may be seen as a converse to Theorem \ref{tr}, generalizes
the fact from \cite{BLM} mentioned at the start of the introduction.  
We showed in \cite{Bare} that this is all false with the word `completely' 
removed, however see \cite{LL} for some later variants
valid for certain Banach algebras.

We will need the following results later.
If $P : M \to M$ is a unital completely contractive projection on a von Neumann algebra,
there exists a {\em support projection} $e$, the perp of the supremum of all projections
in Ker$(P)$, as in {\rm \cite[p.\ 129]{ES}}.  We have $e \in P(M)'$, and $P(x) = P(ex) = P(xe)$ for all $x \in M$, and if $x \in M_+$ then 
$P(x) = 0$ iff $xe = 0$ iff $ex = 0$ (see e.g.\ around Lemma 1.2 in \cite{ES}).     Following the idea in 
the proof of \cite[Lemma 1.2 (3)]{ES} we have:

\begin{proposition} \label{ES}  Let $P : M \to M$ a weak* continuous
unital completely contractive projection on a von Neumann algebra $M$.    Let $e$ be the support projection of $P$ on $M$ discussed above.   Let $N$ be the von Neumann algebra generated by $P(M)$.
Then $P(x) e = eP(x)e = exe = xe$ for all $x \in N$. \end{proposition}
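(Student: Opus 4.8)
The plan is to reduce the whole four-term chain to the single identity $P(x)e = xe$ for $x \in N$, and then observe that the other equalities are automatic. Indeed, since $P$ is unital and contractive on the $C^*$-algebra $M$ it is positive and $*$-linear, so $P(M)$ is a self-adjoint unital subspace; hence $N = P(M)''$ by the bicommutant theorem, and so $P(M)' = (P(M)'')' = N'$. Given that we are told $e \in P(M)'$, this yields $e \in N'$, so $e$ commutes with every $x \in N$ and $exe = xe$. Likewise $P(x) \in P(M)$, so $e$ commutes with $P(x)$ and $eP(x)e = P(x)e$. Thus once $P(x)e = xe$ is known on $N$, all four quantities coincide.

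The crux, which I would treat first, is the case $x = ab$ with $a, b \in P(M)$. Following the Effros--St\o rmer idea, I would apply the Kadison--Schwarz inequality for the (completely) positive unital map $P$: for $b \in P(M)$ one has $P(b^*b) \geq P(b)^*P(b) = b^*b$, using $P(b) = b$. Hence $c := P(b^*b) - b^*b$ is a \emph{positive} element, and $P(c) = P(b^*b) - P(b^*b) = 0$. At this point the faithfulness of the support projection recalled just above the proposition (namely, for $y \in M_+$, $P(y) = 0$ forces $ye = 0$) applies to $c$ and gives $P(b^*b)e = b^*b\,e$. Polarizing over the self-adjoint space $P(M)$ then upgrades this to $P(ab)e = ab\,e$ for all $a, b \in P(M)$.

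Next I would bootstrap to arbitrary products. Using the module identity from Theorem \ref{tr} in the form $P(ax) = P(aP(x))$ for $a \in P(M)$ and $x \in M$, together with the base case applied to $a, P(x) \in P(M)$, I obtain $P(ax)e = aP(x)e$. A straightforward induction on word length then gives $P(x)e = xe$ for every $x$ in the algebra generated by $P(M)$: writing $x = a_1 x'$ with $a_1 \in P(M)$ and peeling off the leftmost factor, $P(a_1 x')e = a_1 P(x')e = a_1(x'e) = (a_1 x')e$. Finally, since $P$ is weak* continuous and right multiplication by $e$ is weak* continuous, and the algebra generated by $P(M)$ is weak*-dense in $N$, the identity $P(x)e = xe$ extends to all of $N$.

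The main obstacle is precisely the base case $P(ab)e = ab\,e$; everything after it is bookkeeping (the module identity, a polarization, an induction, and a density argument). The decisive insight is that Kadison--Schwarz manufactures a \emph{positive} element lying in $\ker P$, at which point the support-projection faithfulness, which we are permitted to assume, does the essential work.
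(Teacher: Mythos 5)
Your proof is correct, and it takes a genuinely different route through the induction than the paper does. Both arguments run on the same engine: the Kadison--Schwarz inequality manufactures a positive element of ${\rm Ker}(P)$, the support-projection fact ($y \geq 0$ and $P(y)=0$ imply $ye=0$) kills it against $e$, and polarization, induction, and weak* density finish. But the paper follows Effros--St{\o}rmer more literally: it inducts on the spans $A_n$ of products of $2^n$ elements of $P(M)$, and at every inductive step re-applies the Schwarz argument to a general $x \in A_n$ via the element $z = e(P(x^*x) - x^*ex)e$, whose positivity uses the inductive hypothesis $eP(x)e = exe$ and whose membership in ${\rm Ker}(P)$ requires adapting \cite[Lemma 1.2 (3)]{ES} ``with minor modifications''. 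You apply Schwarz exactly once, at the bottom level $b \in P(M)$, where $P(b) = b$ makes $P(b^*b) - b^*b$ positive with no inductive input; your inductive step is instead powered by the module identity $P(ax) = P(aP(x))$ for $a \in P(M)$, taken from Theorem \ref{tr}, which peels off one factor at a time in a word-length induction. This is legitimate and non-circular (Theorem \ref{tr} precedes Proposition \ref{ES} in the paper), and it is arguably cleaner: it avoids reworking the Effros--St{\o}rmer computation, and it makes explicit two steps the paper leaves implicit, namely the reduction of the four-term chain to the single identity $P(x)e = xe$ via $e \in P(M)' = N'$, and the final weak*-continuity/density argument extending the identity from the $*$-algebra generated by $P(M)$ to all of $N$. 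What the paper's route buys in exchange is that it stays entirely within the Schwarz-plus-support-projection toolkit of \cite{ES}, without needing the conditional-expectation identity of Theorem \ref{tr}.
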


\begin{proof}    For $n = 0, 1, \dots$, let $A_n$ be the span of products of $2^n$ elements from $P(M)$.
Then $A_n$ is a unital $*$-subspace of $M$.   Suppose that  $eP(x)e = exe$ for all $x \in A_n$.   Then for such $x$, set $z = e(P(x^* x) - x^* ex)e$.  Following the steps in the 
 proof of \cite[Lemma 1.2 (3)]{ES} with minor modifications we have $P(z) = 0$ and $z \geq 0$, so that
by the facts above the present proposition we obtain 
$z = eze = 0$ and $eP(x^* x)e = ex^* xe$.  By the polarization identity
$eP(y^* x)e = ey^* xe$ for $x, y \in A_n$.  So  $eP(x)e = exe$ for all $x \in A_{n+1}$, and hence
for all $x \in N$. \end{proof}  

\begin{corollary} \label{newaei}   Let $P : A \to A$ be a unital completely contractive projection on an operator algebra.
If $P(A)$ generates $A$ as an operator algebra, then $(I-P)(A) = {\rm Ker}(P)$ is an ideal in $A$.  In any case,
if $D$ is the closed algebra generated by $P(A)$ then $(I-P)(D)$ is an ideal in $D$.
 \end{corollary}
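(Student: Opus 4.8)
The plan is to reduce everything to the single algebraic identity supplied by Theorem~\ref{tr}. Since $P$ is unital it forces $A$ to be unital, and a unital completely contractive map on a unital operator space is automatically real completely positive (as noted in the introduction); hence Theorem~\ref{tr} applies and gives
$$P(P(a)\, b) = P(P(a)\, P(b)) = P(a\, P(b)), \qquad a,b \in A.$$
Write $B = P(A) = {\rm Ran}(P)$ and $K = {\rm Ker}(P)$. The first thing I would record is that multiplying an element of $B$ by an element of $K$, on either side, lands back in $K$: if $b = P(a) \in B$ and $k \in K$, then substituting $k$ into the identity (so that $P(k) = 0$) yields $P(bk) = P(P(a)\,k) = P(P(a)\,P(k)) = 0$ and likewise $P(kb) = P(P(k)\,P(a)) = 0$. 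Thus $BK \subseteq K$ and $KB \subseteq K$.

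For the general (``in any case'') assertion, let $D$ be the closed subalgebra of $A$ generated by $B$. Since $P(D) \subseteq B \subseteq D$ and $P$ is idempotent, $P|_D$ is a projection on $D$ with range $B$ and kernel $K_D := K \cap D = (I-P)(D)$, and it is this closed subspace that I must show is a two-sided ideal of $D$. The idea is to bootstrap from the previous paragraph: given a product $b_1 b_2 \cdots b_m$ of elements of $B$ and $k \in K_D$, applying $BK \subseteq K$ from the inside out shows $b_1 \cdots b_m\, k \in K_D$, since each intermediate product stays in $D$ (as $D$ is an algebra) and stays in $K$ (by the one-step inclusion); symmetrically $k\, b_1 \cdots b_m \in K_D$ using $KB \subseteq K$. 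Finite linear combinations of such products are dense in $D$, so because $K_D$ is norm-closed and multiplication is continuous, I conclude $D K_D \subseteq K_D$ and $K_D D \subseteq K_D$, i.e.\ $K_D$ is an ideal of $D$. The first assertion is then the special case $D = A$, which holds exactly when $B$ generates $A$, giving that $(I-P)(A) = {\rm Ker}(P)$ is an ideal of $A$.

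The real content is entirely in the one-step inclusions $BK, KB \subseteq K$, which fall out of Theorem~\ref{tr}; the remaining density/bootstrapping step is routine, the only point needing care being that one must multiply by elements of $B$ (that is, by elements of ${\rm Ran}(P)$) one factor at a time, since the identity of Theorem~\ref{tr} requires one factor in each product to lie in the range of $P$. An alternative in the spirit of the preceding Proposition~\ref{ES} would be to pass to the von Neumann algebra $M = I(A)^{**}$ carrying the weak* continuous unital completely positive projection $E^{**}$ extending $P$, and to read off the ideal property from the support projection $e$ and the relation $P(x)e = xe$ on the generated von Neumann algebra $N$; but the direct route above is shorter and avoids the extension machinery.
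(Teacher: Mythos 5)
Your proof is correct, but it takes a genuinely different route from the paper's. The paper proves this corollary by extending $P$ to a unital completely contractive projection on $I(A)$, passing to second adjoints, and invoking Proposition \ref{ES}: for $x$ in the kernel one gets $ex = xe = 0$ for the Effros--St{\o}rmer support projection $e$, whence $P(xy) = P(exy) = 0$ and $P(yx) = P(yxe) = 0$ for \emph{every} $y \in A$ in one stroke. You instead stay at the norm level: the identity of Theorem \ref{tr} (legitimately available here, since unital completely contractive maps are RCP as noted in the introduction, and Theorem \ref{tr} precedes this corollary, so there is no circularity) gives the one-step inclusions $BK \subseteq K$ and $KB \subseteq K$, and you then propagate these to all of $D$ by induction on word length plus norm density of polynomials in $P(A)$ --- which is exactly where the ``generates'' hypothesis enters, and correctly so, since without it the kernel need not be an ideal (products of two kernel elements land in $P(A)$, cf.\ Lemma \ref{yoho}). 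The two arguments are in a sense parallel: the paper's word-length induction is hidden inside the proof of Proposition \ref{ES} (the spaces $A_n$ of products of $2^n$ elements of $P(M)$, with weak* closure at the end), while yours is explicit, with norm closure at the end. What your route buys: it avoids the injective envelope, bidual, and support-projection machinery entirely, and is self-contained modulo Theorem \ref{tr}. What the paper's route buys: the support projection description of the kernel, ${\rm Ker}(P_{|D}) = D \cap e^\perp M e^\perp$, which the paper records immediately after the corollary and relies on later (in Corollary \ref{newa} and throughout Section 5); your argument establishes the ideal property but does not produce that extra structure.
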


\begin{proof} 
 We may assume that $P(A)$ generates $A$.
As above we extend $P$ to a unital completely contractive projection $\tilde{P}$ on a $C^*$-algebra $B$
($= I(A)$).  The second adjoint of this is a weak* continuous unital completely contractive projection  on a von
Neumann algebra $M$, and we continue to write this projection as $\tilde{P}$.    Let  $\tilde{P}$ also 
denote the restriction of the latter projection to the von Neumann algebra $N$ generated by $P(A)$ inside $M$.
  If $x \in (I-P)(A)$, then $\tilde{P}(x) = 0$, and so by Proposition \ref{ES} we have
$ex = x e = 0$.  Thus $x \in e^{\perp} M e^{\perp}$ (and is also in
$e^{\perp} N e^{\perp}$).   So for  $y \in A$ we have 
$P(xy) = P(exy) = 0$.   Similarly $yx \in (I-P)(A)$, so the latter is an ideal.  \end{proof}  

For later use we record that  if $P : A \to A$ is a unital completely contractive projection on an operator algebra, then
 in the language of the last proof,  $$D \cap e^\perp M e^\perp  = 
D \cap e^\perp N e^\perp  = {\rm Ker}(P_{|D}) = (I-P)(D) .$$
Indeed since we said above Proposition \ref{ES}  that $P(x) = P(exe)$ for any $x \in M$, we have 
 $D \cap e^\perp M e^\perp  \subset {\rm Ker}(P_{|D})$.   Moreover, 
if $d \in D$ with $P(d) = 0$ then 
the argument in the last proof 
shows that $d \in D \cap e^\perp N e^\perp \subset D \cap e^\perp M e^\perp$.
So $D \cap e^\perp M e^\perp  = D \cap e^\perp N e^\perp =  {\rm Ker}(P_{|D})$.

\section{The symmetric  projection problem and the bicontractive projection problem}

It turns out that the variant of the bicontractive projection problem 
for symmetric projections works out perfectly.
This is the question of characterizing (completely) symmetric projections in the categories
we are interested in, and their ranges.
Notice that if $P : X \to X$ is a projection on a normed space
and if we let $\theta = 2P-I$, so that $P = \frac{1}{2} (I + \theta)$, then
Ran $(P)$ is exactly
the set of fixed points of $\theta$, and  $\theta \circ \theta = I$.
Note too that $\theta$ is contractive if $P$ is
symmetric.
From the latter facts 
we deduce  that $\theta$ is a bijective isometry whose inverse is itself.
Also  $\theta(1) = 1$ if $X$ is a unital algebra and $P(1) = 1$.     Applying the same argument
at each `matrix level' we see that:

\begin{lemma} \label{sostate}  
A projection  $P : X \to X$ on an operator space is completely symmetric (resp.\  symmetric) iff 
 $\theta = 2P-I$ is a complete isometry  (resp.\ an isometry), and in this case $\theta$ is also a surjection.   If $X$ is also an algebra (resp.\  Jordan algebra) and 
 if $\theta$ is a homomorphism (resp.\  Jordan homomorphism)
then the range of $P$ is a subalgebra  (resp.\  Jordan subalgebra). 
 \end{lemma}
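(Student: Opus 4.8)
The plan is to unwind the definitions so that the lemma becomes a sequence of elementary observations about the map $\theta = 2P - I$. Writing $P = \frac{1}{2}(I + \theta)$ is the crucial reformulation, since it converts statements about the idempotent $P$ into statements about the involution $\theta$. First I would record the purely algebraic fact that $\theta \circ \theta = I$: expanding $(2P-I)^2 = 4P^2 - 4P + I = 4P - 4P + I = I$, using only $P^2 = P$. This single identity already forces $\theta$ to be a bijection with $\theta^{-1} = \theta$, and it identifies $\mathrm{Ran}(P)$ with the fixed-point set of $\theta$, because $P(x) = x$ is equivalent to $(I+\theta)(x) = 2x$, i.e.\ $\theta(x) = x$.

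Next I would handle the metric/operator-space content. By the definition of \emph{completely symmetric} given earlier in the paper, $P$ is completely symmetric precisely when $\Vert I - 2P \Vert_{\mathrm{cb}} \leq 1$, i.e.\ $\Vert \theta \Vert_{\mathrm{cb}} \leq 1$, so $\theta$ is a complete contraction. Since $\theta^{-1} = \theta$ is then also a complete contraction, $\theta$ is automatically a surjective complete isometry; the same reasoning with ordinary norms in place of $\mathrm{cb}$-norms gives the symmetric case. This is the standard argument that a contractive involution is an isometry, applied at each matrix level, and is exactly what the text flags as ``applying the same argument at each matrix level.'' Conversely, if $\theta$ is a complete isometry then reversing these inequalities shows $P$ is completely symmetric, which yields the stated biconditional.

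Finally, for the last sentence, I would argue that the fixed-point set of $\theta$ is closed under whatever product $\theta$ respects. If $\theta$ is a (Jordan) homomorphism and $a, b \in \mathrm{Ran}(P)$, so that $\theta(a) = a$ and $\theta(b) = b$, then $\theta(ab) = \theta(a)\theta(b) = ab$ (respectively $\theta(a \circ b) = \theta(a) \circ \theta(b) = a \circ b$), so the product $ab$ (resp.\ $a \circ b$) again lies in the fixed-point set $\mathrm{Ran}(P)$. Hence $\mathrm{Ran}(P)$ is a subalgebra (resp.\ Jordan subalgebra), being a closed subspace closed under the relevant multiplication. I do not anticipate a genuine obstacle here: every step is a formal consequence of $P$ being idempotent together with the definitions of the norms involved, and the lemma is essentially a bookkeeping statement that packages these observations. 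The only point requiring the slightest care is the assertion that a complete contraction whose inverse is itself is a complete isometry, but this is immediate from $\Vert x \Vert = \Vert \theta(\theta(x)) \Vert \leq \Vert \theta(x) \Vert \leq \Vert x \Vert$ at each matrix level.
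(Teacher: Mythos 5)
Your proof is correct and follows essentially the same route as the paper: the identity $\theta\circ\theta = I$, the identification of $\mathrm{Ran}(P)$ with the fixed-point set of $\theta$, the observation that a contractive involution is a surjective isometry (applied at each matrix level), and closure of the fixed-point set under the product when $\theta$ is a (Jordan) homomorphism. The paper merely compresses all but the last step into the discussion preceding the lemma, so your write-up is just a more explicit version of its argument.
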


\begin{proof}  For the last part, Ran $(P)$ is exactly
the fixed points of $\theta$.    
  \end{proof}

Thus the (completely) `symmetric projection problem'
in some sense a special case of the (complete) `isometry problem':
namely characterizing the linear (complete) isometries 
between the objects in our category.   That is,
the key to solving the (completely) `symmetric projection problem'
is proving a `Banach--Stone type theorem' in our category.   The original
Banach--Stone theorem characterizes unital isometries between $C(K)$ spaces,
and in particular shows that such are $*$-isomorphisms.  Putting this together 
with the last assertion of the last lemma,  we see that one of the hoped-for conclusions of the 
(completely) `symmetric projection problem', and by 
extension the (completely) `bicontractive projection problem', 
is that the range of the projection is a subalgebra.    
We will also show  in the completely symmetric case that 
 if $A$ is unital or approximately unital then so is $P(A)$. 

Let us examine what this all looks like in a $C^*$-algebra, where as predicted in
the last paragraph, much hinges on the known `Banach--Stone type theorem'
for $C^*$-algebras, due mainly to Kadison.  The following 
is essentially well known (see e.g.\ \cite{FRAD,St}), but 
we do not know of a reference which has all of these assertions, or is in the formulation  we give:  

\begin{theorem} \label{goq}  If $P : A \to A$ is a    
projection on a $C^*$-algebra $A$ then 
$P$ is bicontractive iff $P$ is symmetric. 
Then $P$ is bicontractive and completely 
positive iff  there exists a central projection $q \in M(A)$, such that
 $P = 0$ on $q^\perp A$, and there exists a period $2$ $*$-automorphism $\theta$ of $qA$ 
so that $P = \frac{1}{2}(I + \theta)$ on $qA$.    
\end{theorem}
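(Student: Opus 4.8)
The plan is to reduce everything to Kadison's Banach--Stone theorem for $C^*$-algebras. Throughout write $\theta = 2P - I$, so that $\theta^2 = I$ and $P = \frac{1}{2}(I + \theta)$ automatically, as in Lemma \ref{sostate}. The implication symmetric $\Rightarrow$ bicontractive is formal and holds in any normed space: from $P = \frac{1}{2}(I + \theta)$ and $I - P = \frac{1}{2}(I - \theta)$ one reads off $\Vert P \Vert, \Vert I - P \Vert \leq \frac{1}{2}(1 + \Vert \theta \Vert) \leq 1$. For the reverse implication bicontractive $\Rightarrow$ symmetric, which is the genuinely $C^*$-algebraic input, I would pass to the bidual, where $P^{**}$ and $I - P^{**}$ are weak* continuous contractions on the von Neumann algebra $A^{**}$, and invoke the isometry theory for $C^*$-algebras (the references \cite{FRAD,St}) to conclude that $\theta^{**}$, hence $\theta$, is isometric, i.e.\ $\Vert I - 2P \Vert \leq 1$. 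This borrowed step is the first place I expect difficulty.

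For the ``if'' half of the second assertion I would verify the stated map directly. Since $q$ is central in $M(A)$ we have $A = qA \oplus q^\perp A$ as a direct sum of ideals; $P$ is idempotent on each summand (on $qA$ because $\theta^2 = I$ gives $(\frac{1}{2}(I+\theta))^2 = \frac{1}{2}(I+\theta)$), and $P$ is completely positive because $\theta$, being a $*$-automorphism, is completely positive, so $\frac{1}{2}(I + \theta)$ and the zero map are completely positive on the two pieces. Bicontractivity follows from $\Vert \frac{1}{2}(qa \pm \theta(qa)) \Vert \leq \Vert qa \Vert$ together with the $\ell^\infty$-decomposition of the norm across the central projection $q$.

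The substance is the forward direction of the second assertion. Assume $P$ is bicontractive and completely positive; by the first part $\theta$ is a surjective isometry of $A$ with $\theta^2 = I$, and since completely positive maps are $*$-linear, $\theta$ is $*$-linear. Applying Kadison's Banach--Stone theorem, in its form for surjective isometries of a possibly nonunital $C^*$-algebra with the unitary living in the multiplier algebra, I would write $\theta(a) = u\, J(a)$ where $u \in M(A)$ is unitary and $J$ is a Jordan $*$-automorphism of $A$; here $u = \theta^{**}(1) = 2q - 1$ with $q = P^{**}(1)$. Complete positivity (or Lemma \ref{ispos}) gives $q \geq 0$, so $u$ is a self-adjoint unitary, whence $u^2 = 1$ and $q^2 = q$: thus $q$ is a projection in $M(A)$. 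Next, $*$-linearity of $\theta$ forces $u$ central: comparing $\theta(a)^* = J(a^*)u$ with $\theta(a^*) = u J(a^*)$ and using surjectivity of $J$ shows $u$ commutes with all of $A$, so $q$ is a central projection in $M(A)$.

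Finally I would split along $q$. As $u = q - q^\perp$ acts as $+1$ on $qA$ and $-1$ on $q^\perp A$, and $J(q) = q$ (so $J$ preserves both ideals), we get $\theta = J_1$ on $qA$ and $\theta = -J_2$ on $q^\perp A$, where $J_1, J_2$ are the induced period-two Jordan $*$-automorphisms. On $q^\perp A$ we have $P = \frac{1}{2}(I - J_2)$; positivity forces $J_2(x) \leq x$ for every $x \geq 0$, and applying this to $J_2(x)$ with $J_2^2 = I$ yields $J_2 = I$, hence $P = 0$ on $q^\perp A$. On $qA$ we have $P = \frac{1}{2}(I + J_1)$ with $J_1$ a period-two Jordan $*$-automorphism. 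The last and, I expect, hardest point is to upgrade $J_1$ to a genuine $*$-automorphism $\theta$; this is precisely where complete positivity (rather than mere positivity) is used, since an anti-automorphism summand, modelled on the transpose, is positive but not completely positive. I would deduce this either from the noncommutative Banach--Stone theorem for complete isometries, after checking $\theta$ is completely isometric on $qA$, or directly by showing a $*$-anti-automorphism summand makes $\frac{1}{2}(I + J_1)$ fail complete positivity. Setting $\theta := J_1$ on $qA$ then gives the asserted form, with Proposition \ref{tr2} (applicable since $P^{**}(1) = q$ is a projection) recording the compatible splitting of $P$.
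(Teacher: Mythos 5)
Up to its final step, your proposal is essentially the paper's own proof. The paper likewise disposes of symmetric $\Rightarrow$ bicontractive formally, quotes \cite[Theorem 2]{FRAD} for the converse (directly on $A$; your detour through $A^{**}$ is harmless), uses positivity to make $\theta = 2P-I$ $*$-linear, runs a Kadison-type Banach--Stone argument to produce the selfadjoint central unitary $u = \theta^{**}(1) = 2q-1$ in $M(A)$ (the paper applies Kadison's theorem \cite{Kad1} to $u\theta(\cdot)$ and verifies $u \in M(A)$ by hand, where you invoke the nonunital Paterson--Sinclair form $\theta = uJ(\cdot)$; same substance), and then splits along the central projection $q$. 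Your argument on $q^\perp A$ (positivity gives $J_2(x) \leq x$ for $x \geq 0$, whence $J_2 = I$) is a correct alternative to the paper's inequality $P(q^\perp a) = P(q^\perp a q^\perp) \leq Q(q^\perp) = 0$, and the converse direction, which you check, is omitted by the paper as routine. In these respects your write-up is, if anything, more complete than the paper's, which is avowedly only ``the idea of the proof'' of an ``essentially well known'' result.

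The genuine gap is the step you flag as hardest, and the situation there is worse than you suspect: neither of your proposed routes can be carried out from the stated hypotheses, because level-one bicontractivity plus complete positivity does \emph{not} force $J_1$ to be multiplicative. Take $A = M_2$ and $P(x) = \frac{1}{2}\,{\rm tr}(x) 1$, the unital projection onto $\Cdb 1$. This $P$ is completely positive (it is the Pauli twirl $\frac{1}{4}\sum_{k} u_k x u_k^*$ over the four Pauli unitaries), and $P = \frac{1}{2}(I + \alpha)$ where $\alpha(x) = {\rm tr}(x)1 - x$ is the adjugate, a period-$2$ $*$-anti-automorphism of $M_2$ which preserves singular values and hence is isometric; thus $I - P = \frac{1}{2}(I - \alpha)$ is contractive and $P$ is bicontractive. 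Yet $\theta = 2P - I = \alpha$ is neither a homomorphism nor completely isometric. So your route one fails (complete isometry of $\theta$ cannot be ``checked''---it is false here), and your route two fails (an anti-automorphism summand is perfectly compatible with complete positivity). The paper's own proof does not close this step either: it stops at ``$\theta' = \theta_{\vert qA}$ is a unital isometric isomorphism'' and never actually invokes complete positivity, consistent with the classical results of St{\o}rmer and Friedman--Russo, which only produce a period-$2$ \emph{Jordan} $*$-automorphism. The step (and the theorem) is rescued by reading the hypothesis as \emph{complete} bicontractivity: the example above is then excluded, since $(I-P)_2$ sends the norm-one swap matrix $\sum_{i,j} E_{ij} \otimes E_{ji}$ to an element of norm $3/2$. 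Under that hypothesis \cite[Theorem 2]{FRAD} applies at every matrix level, so $\theta$ is a unital surjective complete isometry of $qA^{**}$, and the noncommutative Banach--Stone theorem (your route one) makes it a $*$-automorphism; that, rather than a contradiction derived from an anti-automorphism summand, is how the final step should be completed.
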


\begin{proof}   
Clearly symmetric projections are bicontractive.
Conversely, if $P$ is bicontractive then by \cite[Theorem 2]{FRAD}
 $\theta =2P - I_A$ is a linear surjective isometric Jordan triple product preserving selfmap of $A$
with $\theta \circ \theta = I_A$, and $P = \frac{1}{2}(I_A + \theta)$.  So $P$ is symmetric.  If also $P$ is positive then $P$, and hence also $\theta$, is $*$-linear.      Let $Q$ be the extension of $P$ to the second dual.

Suppose that  $\theta : A \to B$ is a linear isometric surjection between
$C^*$-algebras.  By a result of Kadison \cite{Kad1},  $u = \theta^{**}(1)$ is a  unitary in $B^{**}$.
Suppose now further that
$\theta$ is $*$-linear.  Then $u$  is selfadjoint, 
and $u \theta^{**}(\cdot)$ is a unital 
isometry so selfadjoint.  Thus $u \theta(a)= \theta(a^*)^* u = \theta(a) u$, for $a \in A$, 
so $u$ is   central.    
 If $a \in A_{\rm sa}$ then $$u\theta(a^2) = (u\theta(a))^2  = \theta(a)^2 \in B,$$
since $u \theta(\cdot)$ is a Jordan morphism.  Thus $uB = u \theta(A)  \subset B$.
So $u \in M(B)$.

Returning to our situation, let $q = Q(1)$.    This is a central projection in $M(A)$, since $u = 2q-1$.       Since $Q(q^\perp) = 0$, if $a \in {\rm Ball}(A)_+$ then
$$P(q^\perp a) = P(q^\perp a q^\perp) \leq Q(q^\perp) = 0,$$
and so $P = 0$ on $q^\perp A$.  Also, since $\theta(q) = q$ and $\theta$ is
 Jordan triple product preserving,
it follows that $P(qa) = \frac{1}{2}(qa + \theta(qa)) = q P(a).$
Thus,
$P(qA) = qP(A)$, and
 the restriction of $P$ to $qA$ is a unital bicontractive positive projection on a unital  $C^*$-algebra.  
Also $\theta(qa) = q \theta(a)$ for $a \in A$, as we had above,
 so $\theta(qA) = q A$.  Hence $\theta' = \theta_{\vert qA}$ is 
a  unital isometric isomorphism of $qA$.
\end{proof}  

{\bf Remark.}  The example $P(x)= \frac{1}{2}(x + x^T)$ on $M_2$ shows the necessity of the 
`completely positive' hypothesis in the part it pertains to.  Note in this example $P$ is positive and contractive, and $I-P$ is completely contractive.

\bigskip

We will `generalize' the bulk of the last result and its proof in Theorem \ref{trivch2}.

We next show that unlike in the $C^*$-algebra case, for projections
on operator algebras
(completely) `bicontractive' is not the same as (completely) `symmetric'.
Also, these both also differ from the notion of (completely) `hermitian'.
We recall that any operator space $X \subset B(H)$ may be unitized to become an operator algebra as follows.
 We define $${\mathcal U}(X) = \biggl\{ \left[ \begin{array}{ccl} \lambda_1 I_H & x\\ 0
&\lambda_2 I_H \end{array} \right] \, :\, x\in X,\ \lambda_1
,\lambda_2 \in\Cdb\biggr\} \subset B(H \oplus H). $$
By definition,
${\mathcal U}(X)$ may be regarded as a subspace of the Paulsen system
 (see  1.3.14 and 2.2.10 in \cite{BLM}, or \cite{Paul}).  It follows from Paulsen's lemma (see \cite[Lemma 8.1]{Paul} or \cite[Lemma 1.3.15]{BLM}) that  if $v \colon X\to X$ is a
linear contraction (resp.\ complete contraction), then the mapping
$\theta_v$ on ${\mathcal U}(X)$  defined by
$$
\theta_v \left( \left[ \begin{array}{ccl} \lambda_1  & x\\ 0
&\lambda_2
\end{array}
\right] \right) \; = \; \left[ \begin{array}{ccl} \lambda_1  \; &
v(x)\\ 0 \; &\lambda_2
\end{array} \right], \qquad x\in X,\ \lambda_1,\lambda_2\in\Cdb,$$
is a contractive (resp.\ completely contractive) homomorphism.

\begin{proposition} \label{one}   Suppose that $X$ is an operator space,
and that $P : X \to X$ is a linear idempotent map.
Then $P$ is completely contractive
(resp.\ completely bicontractive, completely symmetric, completely
hermitian) 
iff the induced map $\tilde{P} : {\mathcal U}(X) 
 \to {\mathcal U}(X)$ is a real completely positive
and completely contractive
(resp.\ completely bicontractive, completely symmetric,  completely hermitian) projection.
In particular these hold (and with the 
word `completely' removed everywhere if one wishes),
if $P : X \to X$ is a linear idempotent map
on a Banach space, when we give $X$ its minimal or maximal operator space
structure (see e.g.\ {\rm \cite{BLM,Paul}}). 
\end{proposition}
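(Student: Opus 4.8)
The plan is to observe that the induced map is precisely the Paulsen map $\tilde P = \theta_P$ attached to $P$, and then to lean entirely on Paulsen's lemma (quoted just above the statement) together with the fact that $X$ embeds completely isometrically as the $(1,2)$-corner of $\mathcal U(X)$. First I would record the bookkeeping identities $\theta_v \circ \theta_w = \theta_{v \circ w}$ and $\theta_{{\rm id}_X} = I_{\mathcal U(X)}$; these show at once that $\tilde P = \theta_P$ is idempotent (so a projection) and unital. Compression to the $(1,2)$-corner is a complete contraction $c : \mathcal U(X) \to X$ whose restriction to that corner is a complete isometry, and $\theta_v$ restricted to the corner is just $v$. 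Hence for every linear $v : X \to X$ one has $\| \theta_v \|_{\rm cb} \leq 1 \iff \| v \|_{\rm cb} \leq 1$, the forward implication being Paulsen's lemma and the reverse being corner restriction. This single equivalence will drive all four cases.

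Next I would reduce each property of $\tilde P$ to the cb-norm of a suitable $\theta_v$. The completely contractive case is immediate, since $\tilde P = \theta_P$. The main obstacle is that $I - \tilde P$, $I - 2\tilde P$ and $I - \tilde P + e^{it}\tilde P$ are \emph{not} of the form $\theta_v$: a $\theta_v$ always fixes the diagonal, whereas these maps alter it. The way around this is a one-line computation that pulls a unimodular scalar out front. One checks that $I - 2\tilde P = -\,\theta_{2P - I}$ and that $I - \tilde P + e^{it}\tilde P = e^{it}\,\theta_{e^{-it}(I - P + e^{it}P)}$. Because multiplication by a scalar of modulus one is a complete isometry, the cb-norms of these maps equal $\| \theta_{2P-I} \|_{\rm cb}$ and $\| \theta_{e^{-it}(I - P + e^{it}P)} \|_{\rm cb}$; the corner equivalence above then turns the condition ``$\leq 1$'' into $\| I - 2P \|_{\rm cb} \leq 1$ and $\| I - P + e^{it}P \|_{\rm cb} \leq 1$, which are exactly complete symmetry and (via the reformulation of completely hermitian recorded before the statement) complete hermitianness of $P$. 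For complete bicontractivity I would instead factor $I - \tilde P$ as the corner inclusion, then $I - P$, then the compression $c$; the outer maps are completely contractive and the corner restriction recovers $I - P$, giving $\| I - \tilde P \|_{\rm cb} = \| I - P \|_{\rm cb}$.

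It remains to produce the real complete positivity of $\tilde P$ and to run the backward implications. In each of the four cases the asserted property forces $\tilde P$ to be completely contractive (bicontractive, symmetric and hermitian each imply contractive), and $\tilde P$ is a unital map on the unital operator space $\mathcal U(X)$; hence $\tilde P$ is automatically RCP by the fact recalled in the introduction that a unital completely contractive map on a unital operator space is real completely positive. The backward implications use only corner restriction: restricting $\tilde P$, $I - \tilde P$, $I - 2\tilde P$ or $I - \tilde P + e^{it}\tilde P$ to the completely isometric corner returns $P$, $I - P$, $I - 2P$ or $I - P + e^{it}P$, so the property of $\tilde P$ descends to $P$ (the RCP hypothesis is not even needed here).

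Finally, for the Banach-space ``in particular'' I would rerun the whole argument one matrix level at a time. Paulsen's lemma is stated for contractions as well as complete contractions, the corner is isometric already at the ground level, and the scalar-unitary identities are purely algebraic and level-independent, so every equivalence holds verbatim with ``completely'' deleted. The single extra point is the real positivity (rather than RCP) of $\tilde P$, which I would check by hand: an element of $\mathfrak r_{\mathcal U(X)}$ is an upper-triangular matrix whose diagonal entries have nonnegative real part and whose corner $x$ satisfies $\| x \| \leq 2\sqrt{{\rm Re}\,\lambda_1 \, {\rm Re}\,\lambda_2}$, and since $\| P(x) \| \leq \| x \|$ the same inequalities hold for $\tilde P$ of that element, so $\tilde P(\mathfrak r_{\mathcal U(X)}) \subset \mathfrak r_{\mathcal U(X)}$. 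Because the norm of an element of $\mathcal U(X)$ depends only on $\lambda_1, \lambda_2$ and $\| x \|_X$, the minimal and maximal (indeed any) operator space structure yield the same ground-level statement, which is why the choice in the proposition is immaterial.
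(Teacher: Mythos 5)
Your proposal is correct and follows essentially the same route as the paper: Paulsen's lemma plus restriction to the completely isometric $1$-$2$ corner, the observation that $I-\tilde P$ kills the diagonal and acts as $I-P$ in the corner, the scalar-pulling identities $I-2\tilde P=-\theta_{2P-I}$ and $e^{-it}\bigl(I-\tilde P+e^{it}\tilde P\bigr)=\theta_{e^{-it}(I-P+e^{it}P)}$, and real complete positivity of $\tilde P$ from its unitality (the paper gets this from the completely positive extension to ${\mathcal U}(X)+{\mathcal U}(X)^*$ furnished by Paulsen's lemma together with \cite{BBS}, which is the same underlying fact). The only divergence is the Banach-space assertion, where the paper upgrades contractive to completely contractive via the cb-norm identities (1.10)/(1.12) of \cite{BLM} for the minimal/maximal structures and then quotes the first part, whereas you rerun the argument at the ground level using the contractive form of Paulsen's lemma and a by-hand identification of ${\mathfrak r}_{{\mathcal U}(X)}$ (your characterization $\Vert x\Vert\leq 2\sqrt{{\rm Re}\,\lambda_1\,{\rm Re}\,\lambda_2}$ is correct) — both work equally well.
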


\begin{proof}   If $P$ is completely contractive then by Paulsen's lemma
referred to above,  
the unique $*$-linear unital extension of $\tilde{P}$ to ${\mathcal U}(X) + 
{\mathcal U}(X)^*$ is completely contractive and completely positive.
Thus by \cite[Section 2]{BBS}, $\tilde{P}$ is real completely  positive. 
Clearly
$\tilde{P}$ is a projection.  Conversely, if $\tilde{P}$ is completely contractive
then so is $P$.   We note that 
$I- \tilde{P}$ annihilates the diagonal, and acts as
$I-P$ in the $1$-$2$-entry.  Thus  $I-P$ is completely contractive iff
 $I- \tilde{P}$ is completely contractive.
Also note that $2 \tilde{P} - I$ does nothing to the diagonal, and acts as
$2P - I$ in the $2$-$1$-entry; that is $2 \tilde{P} - I
= \widetilde{2P-I}$.  By Paulsen's lemma again, as above,  
$2 \tilde{P} - I$ is completely contractive iff $2P-I$ is completely contractive.
Finally, $I - \tilde{P} + e^{it} \tilde{P}$ multiplies each of the two 
diagonal entries by $e^{it}$, and acts as
$I - P + e^{it} P$ in the $1$-$2$-entry.   Multiplying
by $e^{-it}$, we see again by  Paulsen's lemma 
that this is completely contractive iff $P$ is completely hermitian.

For the  last assertion, if $P$ is contractive (resp.\ bicontractive, symmetric, 
hermitian) then it is completely contractive (resp.\ bicontractive, symmetric, 
hermitian) 
on $X$ with its minimal or maximal operator space
structure by e.g.\  (1.10) and (1.12) in \cite{BLM}.    We may then 
apply the case in the previous paragraph to obtain the result.  Conversely,
if $\tilde{P}$ is contractive (resp.\ bicontractive, etc)., then so is $P$ since $P$ 
is the $1$-$2$ corner of $\tilde{P}$.   The rest is clear.
\end{proof}

The previous result provides examples of real completely positive unital
completely bicontractive (resp.\ completely contractive, completely symmetric)
projections which are not
completely symmetric (resp.\ completely bicontractive, completely hermitian).
As we said, this is in contrast to the $C^*$-algebra case where
(complete) bicontractivity  is equivalent to being (completely)
symmetric \cite{St,FRAD}.  

\begin{example}   A more specific example of a real completely positive
 unital
completely bicontractive projection on an operator algebra
which is not symmetric arises by the last construction, from the following explicit 
completely bicontractive projection which is not completely symmetric.
Let $Y$ be $\Rdb^2$, the latter viewed as a real Banach space whose unit ball is
the unit ball of $\ell^\infty_2$ in the first quadrant and 
the unit ball of $\ell^1_2$ in the second quadrant.
Let $X$ be  the standard  complexification of $Y$.
 Take  $P : X \to X$ to be the usual complexification
of the `projection onto the first coordinate' 
on $Y$.  We thank 
Asvald and Vegard Lima for this example, which is a 
bicontractive projection on a Banach space which is not symmetric. 
Giving $X$ its minimal or maximal operator space
structure makes $X$ an operator space, and makes $P$   (by e.g.\  (1.10) or (1.12) in \cite{BLM})
a completely bicontractive projection which is not symmetric.
Hence by Proposition \ref{one} we get a real completely positive  unital
completely bicontractive projection on an operator algebra
which is not symmetric.  \end{example}

Example \ref{exoo} shows that one cannot extend  completely symmetric real completely 
positive projections to a bicontractive projection on a containing $C^*$-algebra.
Things are much better if $P$ is a unital map:

\begin{lemma} \label{triv}  Let $A$ be a unital operator algebra,
and $P : A \to A$ a completely symmetric unital projection.   
Then $P$ is real completely positive, and 
the range of $P$  is a 
subalgebra of $A$.   Moreover, $P$ extends to a 
completely symmetric unital projection on $C^*_e(A)$ (or on 
$I(A)$).  
\end{lemma}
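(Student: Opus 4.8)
The plan is to pass, exactly as in Lemma \ref{sostate}, to the symmetry $\theta = 2P - I$, so that $P = \frac{1}{2}(I + \theta)$ and $\mathrm{Ran}(P) = \mathrm{Fix}(\theta)$. Since $P$ is a completely symmetric unital projection, Lemma \ref{sostate} together with the discussion preceding it shows that $\theta$ is a unital surjective complete isometry of $A$ with $\theta \circ \theta = I$. Real complete positivity is then immediate and requires no further work: $P = \frac{1}{2}(I + \theta)$ is a unital complete contraction (its cb-norm is at most $\frac12(1 + \|\theta\|_{\mathrm{cb}}) \le 1$), and a unital completely contractive map on a unital operator space is RCP, as recorded in the introduction.

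The crux is to promote the bare complete isometry $\theta$ to an honest period-$2$ algebra automorphism, and this is the one point I expect to be the real obstacle: a unital surjective complete isometry of a $C^*$-algebra is a priori only a Jordan $*$-map, so invoking Kadison's theorem would leave an unwanted anti-homomorphic part to dispose of. I would avoid this entirely by using the universal property of the $C^*$-envelope. Viewing $\theta$ as a unital complete isometry $\theta : A \to C^*_{\rm e}(A)$ whose range $\theta(A) = A$ generates $C^*_{\rm e}(A)$, the universal property produces a $*$-homomorphism $\pi : C^*_{\rm e}(A) \to C^*_{\rm e}(A)$ with $\pi \circ \theta$ equal to the inclusion of $A$, that is $\pi(\theta(a)) = a$ for all $a \in A$. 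Because $\theta$ is a bijection of $A$ with $\theta^{-1} = \theta$, this says precisely $\pi|_A = \theta$. Now $\pi \circ \pi$ is a unital $*$-endomorphism of $C^*_{\rm e}(A)$ restricting to $\theta \circ \theta = I$ on $A$; as $A$ generates $C^*_{\rm e}(A)$ as a $C^*$-algebra, $\pi \circ \pi = I$, so $\pi$ is a period-$2$ $*$-automorphism of $C^*_{\rm e}(A)$ extending $\theta$.

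With $\pi$ in hand the two remaining conclusions follow quickly. First, $\theta = \pi|_A$ is multiplicative on $A$ (for $a,b \in A$, $\theta(ab) = \pi(ab) = \pi(a)\pi(b) = \theta(a)\theta(b)$, and $ab \in A$), hence a unital completely isometric algebra homomorphism of $A$; so by the last part of Lemma \ref{sostate} the range $P(A) = \mathrm{Fix}(\theta)$ is a subalgebra of $A$. Second, setting $\tilde P = \frac{1}{2}(I + \pi)$ on $C^*_{\rm e}(A)$ gives a projection (since $\pi^2 = I$), which is unital (since $\pi(1)=1$), completely symmetric (since $I - 2\tilde P = -\pi$ is a complete isometry), and extends $P$ (since $\pi|_A = \theta$); this is the asserted extension to $C^*_{\rm e}(A)$.

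For the extension to $I(A)$ I would argue more directly, noting that multiplicativity is not even needed there. By injectivity of $I(A)$ extend the complete contraction $\theta : A \to A \subseteq I(A)$ to a complete contraction $\Theta : I(A) \to I(A)$. Then $\Theta^2$ is a complete contraction extending $\theta^2 = I_A$, so rigidity of $I(A)$ over $A$ forces $\Theta^2 = I_{I(A)}$; hence $\Theta$ is a unital surjective complete isometry with $\Theta^{-1} = \Theta$ and $\Theta|_A = \theta$. Consequently $\frac{1}{2}(I + \Theta)$ is a completely symmetric unital projection on $I(A)$ extending $P$. Thus the only genuinely substantive step is the universal-property argument of the second paragraph, which converts the complete isometry $\theta$ into a $*$-homomorphism and thereby sidesteps the Jordan-versus-associative difficulty.
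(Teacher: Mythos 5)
Your proof is correct, and it shares the paper's overall skeleton (pass to $\theta = 2P - I$, make $\theta$ multiplicative, identify ${\rm Ran}(P)$ with the fixed points of $\theta$, then extend), but the two substantive steps are carried out by genuinely different means, and the comparison is instructive. At the crux, the paper simply cites the Banach--Stone theorem for operator algebras from \cite{BLM} to conclude that the unital surjective complete isometry $\theta$ is a homomorphism; you instead apply the universal property of $C^*_{\rm e}(A)$ to $\theta$ viewed as a unital complete isometry $A \to C^*_{\rm e}(A)$ whose range generates $C^*_{\rm e}(A)$, obtaining a $*$-homomorphism $\pi$ with $\pi|_A = \theta$ and $\pi \circ \pi = I$. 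This single stroke yields both the multiplicativity of $\theta$ (hence that ${\rm Ran}(P)$ is a subalgebra, by Lemma \ref{sostate}) and the extension $\tilde{P} = \frac{1}{2}(I + \pi)$ to $C^*_{\rm e}(A)$; in effect you have inlined a proof of exactly the special case of Banach--Stone that is needed, since that theorem is itself established via the $C^*$-envelope, so your argument is more self-contained at the cost of modest extra length. For the extension to $I(A)$, the paper extends the $*$-automorphism $\pi$ of $C^*_{\rm e}(A)$ further to a unital $*$-automorphism of $I(A)$ (a fact from Hamana's theory), whereas your injectivity-plus-rigidity argument (extend $\theta$ to a complete contraction $\Theta$ on $I(A)$; then $\Theta^2$ is a complete contraction extending $I_A$, so rigidity forces $\Theta^2 = I$) is more elementary and, as you observe, never uses multiplicativity at all -- it would work verbatim for a completely symmetric unital projection on any unital operator space. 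Both routes are sound; yours trades the paper's two citations for short direct arguments.
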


\begin{proof}   Any completely contractive unital map on $A$ is 
real completely positive as we said in the introduction.
Let $\theta = 2P-I$, so that $P = \frac{1}{2} (I + \theta)$.
By Lemma \ref{sostate} $\theta$ is  a unital complete isometry.  So in fact
$\theta$ is a homomorphism, by the Banach--Stone theorem
for operator algebras \cite{BLM}.  This implies as we said in Lemma \ref{sostate},  that
Ran$(P)$ is a subalgebra of $A$.
We can extend $\theta$ uniquely to a
unital $*$-isomorphism $\pi : C^*_e(A) \to C^*_e(A)$, with
$\pi \circ \pi = I$, and it follows that 
$\tilde{P} = \frac{1}{2} (I + \pi)$ is a 
completely symmetric extension of $P$.   It is
also completely positive.  

Similarly, since one may extend $\pi$ further to a unital 
$*$-automorphism of $I(A)$, there is a completely symmetric 
unital projection on $I(A)$ extending $P$.   \end{proof}

\begin{lemma} \label{trivch}  Let $A$ be a unital operator algebra,
and $P : A \to A$ a completely contractive real positive projection which is bicontractive. 
Then 
 $P(1) = q$ is a projection (not necessarily central), and all the conclusions of 
Theorem {\rm \ref{tr}} and Proposition {\rm \ref{tr2}} hold.   Also, 
 there exists a
unital  completely bicontractive (real completely positive) projection $P' : qAq \to qAq$ such that $P$ is
the  zero map on $q^\perp A + A q^\perp$, and $P =  P'$ on $qAq$.     We have ${\rm Ran}(P) = {\rm Ran}(P')$
\end{lemma}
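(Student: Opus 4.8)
The plan is to reduce the entire statement to the single claim that $v := P(1)$ is a projection. Lemma~\ref{ispos} gives $0 \leq v \leq 1$, and idempotence gives $P(v) = v$ and hence $P(1-v) = 0$ with $1 - v \geq 0$; these two identities, together with bicontractivity, are what I expect to drive the argument. Granting that $v$ is a projection $q$, Proposition~\ref{tr2} in its unital form (with $P$ in place of $P^{**}$) does the rest: it yields that $P$ is real completely positive, that all conclusions of Theorem~\ref{tr} hold, that $P(a) = qP(a)q = P(qaq)$, that $P$ annihilates $q^\perp A + A q^\perp + q^\perp A q^\perp$, and that the restriction $P'$ of $P$ to $qAq$ is a real completely positive completely contractive projection with ${\rm Ran}(P') = P(A) = {\rm Ran}(P)$. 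Since $A$ is unital and $q = P(q)$, the map $P'$ is unital; and since $(I - P')a = (I - P)a$ for $a \in qAq$, $P'$ inherits bicontractivity from $P$, the identical computation at each matrix level giving complete bicontractivity of $P'$ whenever $I - P$ is completely contractive.

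Everything therefore rests on showing $\sigma(v) \subseteq \{0,1\}$. One cannot simply copy the proof of Theorem~\ref{goq}: there $\theta = 2P - I$ is an isometry because bicontractivity equals symmetry for $C^*$-algebras, whereas here $P$ is only assumed bicontractive, so $\theta$ need not be isometric and the Kadison/Banach--Stone step is unavailable. Instead I would use real positivity to extend $P$ to the operator system $A + A^*$: as in \cite[Section~2]{BBS} and the proof of Proposition~\ref{one}, the $*$-linear prescription $\tilde{P}(a_1 + a_2^*) = P(a_1) + P(a_2)^*$ is a well-defined positive projection with $\tilde{P}(1) = v$. The mechanism I would then exploit is clearest in the commutative prototype $A = C(K)$, where $P(f)(s) = \int f\, d\mu_s$ for positive measures $\mu_s$ with $\mu_s(K) = v(s)$: if $v(s_0) = \lambda \in (0,1)$, then the invariance $P(v) = v$ forces the representing measure $\mu_{s_0}$ to carry no atom at $s_0$, whereupon $\|\delta_{s_0} - \mu_{s_0}\|$ exceeds $1$, contradicting the total-variation bound $\|(I - P)^*\| \leq 1$ furnished by bicontractivity. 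Thus projectivity and bicontractivity pull the spectrum of $v$ in opposite directions unless $\lambda \in \{0,1\}$.

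The main obstacle is to carry this dichotomy into the genuinely non-self-adjoint setting. I would run the dual form: for a state $\varphi$ detecting a spectral value $\lambda \in (0,1)$ of $v$, put $\psi = \varphi \circ P$; then $\psi$ is accretive, $\psi \circ P = \psi$ by idempotence, and $\|\varphi - \psi\| \leq \|\varphi\|$ since $\|I - P\| \leq 1$. The goal is to show that the invariance $\psi = \psi \circ P$ together with positivity forces the $\varphi$-component of $\psi$ to vanish, contradicting the norm bound, exactly as the ``no atom'' step does in the prototype. The delicate point, and what I expect to be the crux of the whole proof, is extracting enough Kadison--Schwarz-type control from only one-level positivity of $\tilde{P}$ (which need not be $2$-positive, so the usual inequality is not directly available) together with the contractivity of $I - P$, and choosing $\varphi$ correctly. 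I would carry this out in $A^{**}$, where the spectral projections of $v$ live in the von Neumann algebra $\Delta(A^{**}) = A^{**} \cap (A^{**})^*$ and weak*-limits of powers of $v$, i.e. the support tripotent $u(v)$, are available to locate $\lambda$. Once $\sigma(v) \subseteq \{0,1\}$ is established, $v$ is a projection and the reduction in the first paragraph completes the proof.
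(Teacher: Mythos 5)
Your first paragraph is exactly the paper's strategy, and that part is fine: once $q = P(1)$ is known to be a projection, Proposition \ref{tr2} (applied to $P$ itself, since $A$ is unital) delivers real complete positivity, all conclusions of Theorem \ref{tr}, the splitting, and the properties of $P'$; your observation that complete bicontractivity of $P'$ requires $I-P$ to be completely contractive is also correct. The genuine gap is that you never prove the one claim everything rests on. Your second and third paragraphs are a proof plan, not a proof: the $C(K)$ discussion is heuristic, and in the noncommutative step you yourself flag the crux (``extracting enough Kadison--Schwarz-type control from only one-level positivity \dots and choosing $\varphi$ correctly'') as unresolved. Nothing in the sketch identifies a mechanism that actually forces the spectrum of $v$ into $\{0,1\}$.

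The idea you are missing is the identity $P(P(1)^n) = P(1)$ for all $n$, which is part of the conclusion of Theorem \ref{tr} (it comes from the formula $P(P(a)b) = P(P(a)P(b)) = P(aP(b))$ with $a = b = 1$, iterated); the paper's proof quotes exactly this. (Strictly, Theorem \ref{tr} assumes real \emph{complete} positivity, so your caution about one-level hypotheses is not unfounded; but every application of this lemma in the paper is to an RCP projection.) Granting this identity, the paper's printed proof takes a few lines: since $q \geq 0$ by Lemma \ref{ispos}, the closed algebra $B$ generated by $1$ and $q$ is a commutative $C^*$-algebra; the identity $P(q^n) = q$ gives $P(B) \subset B$; so $P|_{B}$ is a positive bicontractive projection on a $C^*$-algebra, and Friedman--Russo \cite[Theorem 2]{FRAD} yields that $q$ is a partial isometry, hence (being positive) a projection. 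Note the irony: your ``commutative prototype'' is not merely an analogy --- the problem literally reduces to the commutative $C^*$-algebra $B$, because $B$ is $P$-invariant, and the noncommutative transfer you identified as the main obstacle never has to be confronted. The same identity also powers the elementary argument in the paper's remark following the lemma: $P^{**}(u(q)) = q$ since $u(q)$ is the weak* limit of powers of $q$, so if $q \neq u(q)$, choosing $c > 0$ with $\Vert c(q - u(q)) \Vert = 1$ produces the norm-one element $x = c(q - u(q)) - u(q)$ (a difference of orthogonal positive contractions) with
$$(I - P^{**})(x) = (1+c)(q - u(q)),$$
of norm strictly greater than $1$, contradicting bicontractivity. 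Without $P(q^n) = q$, your state-duality argument has no control over how $P$ interacts with the functional calculus of $v$, and that is exactly where your proposal stalls.
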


\begin{proof}   Suppose that $q = P(1)$.  Then 
$q \geq 0$ by Lemma \ref{ispos}, so that the closed algebra $B$ generated by
$q$ and $1$ is a $C^*$-algebra.  Note that 
$P(q^n) = q$ by  Theorem \ref{tr},
 so that $P(B) \subset B$.  Let $Q = P_{\vert B}$, 
this is a  bicontractive projection
on $B$, and it is positive by the proof of Lemma \ref{ispos}. 
  By \cite[Theorem 2]{FRAD} and its proof, $q = P(1)$ is a
partial isometry in $B$, hence is a projection.    Hence all the conclusions of 
Proposition \ref{tr2} and  Theorem {\rm \ref{tr}}  hold.    So  $P(a) = q P(a)q  = P(qaq)$ for $a \in A,$ and $P(A) = q P(A) q = P(qAq)$, 
and 
$P$ `splits' as the sum of  the zero map on $q^\perp A + A q^\perp$, and
a unital  projection $E$ on $qAq$.  \end{proof}

{\bf Remark.} A direct proof that $P(1)$ is a projection in the 
case of the lemma above:
Let $P(1) = q$.  As we saw in   Theorem \ref{tr}, $P(q^n) = q$. Thus $P(u(q)) = q$,
(we defined  $u(\cdot)$
in the introduction).
Suppose that $c$ is a positive scalar with     
$c(q - u(q))$ of norm one.
Then
$$(I-P)(c(q - u(q)) - u(q)) = c(q - u(q)) - u(q)
- (c(q-q) - q) = (1+c)(q - u(q))$$ 
which has norm $> 1$. 
By the contractivity of $I - P$ we have a contradiction,
unless $q = u(q)$.  So $q = P(1)$ is a projection.

\bigskip

The following is the solution to the symmetric projection problem
in the category of  approximately unital operator algebras.

\begin{theorem} \label{trivch2}  Let $A$ be an approximately unital operator algebra,
and $P : A \to A$ a completely symmetric real completely positive projection.
Then the range of $P$  is an approximately unital 
subalgebra of $A$.   Moreover, $P^{**}(1) = q$ is a projection in the multiplier algebra
$M(A)$ (so is both open and closed).  

Set $D = qAq,$ a hereditary subalgebra of $A$ containing $P(A)$.
There exists a period $2$ surjective completely 
isometric homomorphism $\theta : A \to A$ such that $\theta(q) = q$, so that $\theta$ restricts to 
a  period $2$ surjective completely 
isometric homomorphism $D \to D$.    Also,  $P$ is
the  zero map on $q^\perp A + A q^\perp + q^\perp A q^\perp$, and $$P =  \frac{1}{2} (I + \theta) \; \; \;
{\rm on} \; D.$$
In fact $$P(a)=  \frac{1}{2} (a +  \theta(a) (2q-1)) \, , \qquad a \in A.$$    The range of $P$ is exactly the set of fixed points of $\theta_{\vert D}$  in $D$.

Conversely, any 
map of the form in the last equation, for a period $2$ surjective completely 
isometric homomorphism $\theta : A \to A$ and a projection $q \in M(A)$ with $\theta^{**}(q) = q$, is a 
completely symmetric real completely positive projection. 
\end{theorem}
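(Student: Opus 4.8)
The plan is to push $P$ to the unital bidual, read off a projection $q$ there, and then invoke the operator-algebra Banach--Stone theorem. Write $\theta_1 = 2P - I$; by Lemma \ref{sostate} this is a surjective complete isometry of $A$ onto itself with $\theta_1 \circ \theta_1 = I$. Its second dual $P^{**}$ is a completely contractive projection on the \emph{unital} operator algebra $A^{**}$; it is real positive (real complete positivity passes to the bidual, as in the proof of Theorem \ref{tr} and \cite{BBS}) and bicontractive (being completely symmetric). Thus Lemma \ref{trivch} applies to $P^{**}$ and yields that $q := P^{**}(1)$ is a projection, so $u := \theta_1^{**}(1) = 2q-1$ is a selfadjoint unitary, and all conclusions of Proposition \ref{tr2} and Theorem \ref{tr} hold for $P^{**}$: in particular $P^{**}(a) = q P^{**}(a) q = P^{**}(qaq)$, the range of $P^{**}$ lies in $qA^{**}q$, and $P^{**}$ vanishes on $q^\perp A^{**} + A^{**}q^\perp + q^\perp A^{**} q^\perp$.

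Next I would simultaneously produce the homomorphism $\theta$ and place $q$ in $M(A)$. The Banach--Stone theorem for approximately unital operator algebras \cite{BLM}, applied to the surjective complete isometry $\theta_1 : A \to A$, factors it as $\theta_1 = \theta(\cdot)\,u$ for a surjective completely isometric homomorphism $\theta : A \to A$ and a unitary $u \in M(A)$, with $u = \theta_1^{**}(1) = 2q-1$. (The left-handed form $u\,\Phi(\cdot)$ that this theorem delivers is converted to the displayed right-handed form by replacing $\Phi$ with the conjugate homomorphism $u\,\Phi(\cdot)\,u$, legitimate since $u \in M(A)$ and $u = u^{-1}$.) Crucially $u \in M(A)$ forces $q = \tfrac12(1+u) \in M(A)$, so $q$ is a multiplier projection, open by Proposition \ref{tr2} and, since $q^\perp \in M(A)$ as well, also closed. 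Feeding $\theta_1 \circ \theta_1 = I$ and $u^2 = 1$ back into the factorization and evaluating at $1 \in A^{**}$ gives $\theta^{**}(u) = u$ and $\theta^2 = I$, hence $\theta^{**}(q) = q$. This is the heart of the proof, and the point where complete symmetry (rather than mere complete bicontractivity) is used: it is exactly what lets Banach--Stone manufacture a genuine homomorphism and the multiplier $u$ in one stroke, bypassing a delicate direct verification that $a \mapsto (2P(a)-a)(2q-1)$ is multiplicative, which runs into nonobvious cross-corner identities.

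The remaining structural claims are now bookkeeping. The factorization reads $2P(a) - a = \theta(a)(2q-1)$, i.e.\ $P(a) = \tfrac12(a + \theta(a)(2q-1))$, the displayed formula. Since $\theta$ is a homomorphism fixing $q$ it preserves the four $q$-corners, so $\theta(qAq) = qAq$ and $D := qAq$---a genuine subset of $A$ because $q \in M(A)$---is a hereditary subalgebra on which $\theta$ restricts to a period $2$ completely isometric automorphism; moreover $P(A) \subseteq D$ by Proposition \ref{tr2}. For $a \in D$ one has $\theta(a) \in D$, whence $\theta(a)(2q-1) = \theta(a)$ and the formula collapses to $P = \tfrac12(I + \theta)$ on $D$, while the vanishing of $P$ on $q^\perp A + Aq^\perp + q^\perp A q^\perp$ is given by Proposition \ref{tr2}. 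Finally, as $\mathrm{Ran}(P) \subseteq D$ and $P(a) = \tfrac12(a + \theta(a))$ there, we have $P(a) = a$ iff $\theta(a) = a$; so $\mathrm{Ran}(P)$ is precisely the fixed-point set of $\theta_{|D}$, which is a subalgebra of $D$ (the fixed points of a homomorphism) and is approximately unital by Theorem \ref{tr}. This establishes the first assertion as well.

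For the converse, given $\theta$ and $q$ as specified I would set $P(a) = \tfrac12(a + \theta(a)(2q-1))$ and check three things. Idempotency is the direct computation that, using $\theta^{**}(q) = q$ (so $\theta(2q-1) = 2q-1$), $\theta^2 = I$ and $(2q-1)^2 = 1$, collapses $P \circ P$ to $P$. Complete symmetry is immediate from Lemma \ref{sostate}, since $2P - I = \theta(\cdot)(2q-1)$ is the composite of the complete isometry $\theta$ with right multiplication by the unitary $2q-1$, hence a surjective complete isometry. Real complete positivity is the delicate step: the mechanism is that a completely isometric homomorphism with $\theta^{**}(1) = 1$ satisfies $\Vert 1 - \theta_n(x)\Vert = \Vert 1 - x\Vert$ and so preserves each ${\mathfrak F}_{M_n(A)}$ and hence ${\mathfrak r}_{M_n(A)}$, making $\theta$ itself real completely positive, which one then combines with the way $q$ enters the formula. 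I expect this last verification to be the main obstacle on the converse side, since---unlike complete symmetry---real positivity does not come for free (right multiplication by $2q-1$ is not real positive in general), so the compatibility of $\theta$ with $q$ on the complementary corners must be used.
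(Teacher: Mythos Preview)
Your forward direction is essentially the paper's proof: pass to $P^{**}$, use Lemma \ref{trivch} to get that $q$ is a projection, apply the Banach--Stone theorem for operator algebras to $2P-I$ to manufacture the homomorphism and the multiplier unitary $u = 2q-1$, and then read off the formula and the corner structure. The paper uses slightly different notation (it calls $2P-I$ by the name $\theta$ and the homomorphism $\pi$, then shows $\pi = \theta$ on $D$) and verifies $\theta(D)=D$ via $D=\theta_1^2(D)\subset\theta_1(D)$ rather than your corner-preservation argument, but these are cosmetic. Your derivation of $\theta^{**}(u)=u$ and $\theta^2=I$ from $\theta_1^2=I$ is a bit more explicit than the paper, which leaves that step implicit after recording $\pi^{**}(q)=q$.

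For the converse, however, you are making the real complete positivity step harder than it needs to be, and your outline does not close. You correctly observe that right multiplication by $2q-1$ is not real positive, and you gesture at ``compatibility of $\theta$ with $q$ on the complementary corners,'' but you never actually complete the argument. The paper bypasses this entirely: once idempotency and complete symmetry are checked (as you do), simply compute
\[
P^{**}(1)=\tfrac12\bigl(1+\theta^{**}(1)(2q-1)\bigr)=\tfrac12\bigl(1+(2q-1)\bigr)=q,
\]
using that $\theta^{**}(1)=1$ since $\theta$ is a surjective homomorphism between approximately unital algebras. Now $P^{**}(1)$ is a projection, and Proposition \ref{tr2} immediately gives that $P$ is real completely positive. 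This is a one-line replacement for your entire last paragraph, and it is worth internalizing: when the target criterion is ``$P^{**}(1)$ is a projection,'' check that directly rather than trying to propagate real positivity through the pieces of the formula.
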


\begin{proof}    
Applying  Lemma \ref{trivch} to $P^{**}$ , we see that $P^{**}(1) = q$ is a projection, and 
all the conclusions of Proposition \ref{tr2} and  Theorem \ref{tr} are true for us.  
We will silently be using facts from these results below. In particular $q$ is an open projection, so supports
an approximately unital subalgebra $D$ of $A$ with $D^{\perp \perp} = qA^{**}q$ (see \cite{BHN}).   Then  
$\theta = 2 P - I$ is a  linear completely isometric surjection on $A$ by Lemma \ref{sostate}.   So by the Banach--Stone theorem for operator algebras \cite[Theorem 4.5.13]{BLM},
   there exists a completely isometric surjective homomorphism $\pi : A \to A$ and a unitary $u$
 with $u, u^{-1} \in M(A)$ with $\theta = 
\pi(\cdot) u$.    We have $$\theta^{**}(1) = 2 P^{**}(1) - 1 = 
2 q - 1 = \pi^{**}(1) u = u,$$
so that $u$ is a selfadjoint unitary (a symmetry), and $q \in M(A)$.    So $qAq = D \subset A$.
Since $P(A) = qP(A)q$,
the range of $P$ is contained in $D$, 
and the range of $P$ is exactly the set of fixed points of $\theta$, which all lie in $D$.  
   This implies  that
Ran$(P)$ is a subalgebra of $A$.  It is approximately unital and $P$ is
real completely positive by Corollary \ref{acp}.   

We have $\theta^{**}(q) = q$ and  $\pi^{**}(q) = \theta^{**}(q) u = qu = q$.
Then $2P(a) - a = \pi(a)(2q-1)$ and $P(a)=  \frac{1}{2} (a +  \pi(a) (2q-1))$.
Indeed $$\pi(a) = (2P(a) - a) u = (2P(a) - a) (2q-1) = 
2P(a) - 2 a q + a, \qquad a \in A.$$ 
From this, or otherwise, one sees that  $\pi$ equals $\theta$ on $D$, and $\pi(D) = (2P - I)(D) \subset D$.  
However  $D = \theta^2(D) \subset \theta(D) = \pi(D)$, so $\pi(D) = D$.     This completes the main part 
of the theorem.

For the converse, note that such $P(a)=  \frac{1}{2} (a +  \pi(a) (2q-1))$ is clearly completely
symmetric on $A$, and 
$$P(P(a)) = P(\frac{1}{2} (a +  \pi(a) (2q-1))) = \frac{1}{4} (a + 2 \pi(a)  (2q-1) + \pi(\pi(a) (2q-1) )(2q-1) =
P(a),$$  since $\pi$ is period $2$, $\pi^{**}(q) = q$, and $2q-1$ is a symmetry.  
We have $P^{**}(1) = \frac{1}{2} (1 +  \pi^{**}(1) (2q-1)) = q$, so that $P$ is
real completely positive by Proposition \ref{tr2}.
\end{proof}

It follows easily from the last theorem that 
 a completely symmetric real completely positive projection $P$ on $A$
extends to a completely
symmetric projection $\tilde{P}$ on the $C^*$-envelope of $A$.
Moreover, $\tilde{P}(x) =  \frac{1}{2} (a +  \tilde{\pi}(a) (2q-1))$,
for a $*$-automorphism $\tilde{\pi}$.    However, this
extension will not in general be positive.

In the work \cite{BNj} in progress, we prove the Jordan algebra variant of the last result.

\section{The bicontractive projection problem}

The following could be compared with Corollary \ref{newaei}. 

\begin{lemma} \label{yoho}  Let $A$ be a unital operator algebra,
and $P : A \to A$ a  unital projection with $I-P$  completely contractive.  
Let $C = (I-P)(A) = {\rm Ker}(P)$.  Then $C^2 \subset P(A)$.  We also have
\begin{itemize} \item [(1)]  $C$ is a subalgebra of $A$ iff $C^{2} = (0)$. 
 \item [(2)]  If $P$ is completely bicontractive (or more generally, if $P(aP(b)) = P(P(a) P(b))$ for $a, b \in A$)
then $P(A) C + C P(A) \subset C$, and $C^3 \subset P(A)$.  
 \item [(3)]    If the conditions in 
{\rm (1)}  hold, and if 
$P$ is completely contractive (or more generally, if $P(aP(b)) = P(P(a) P(b))$ for $a, b \in A$)
then $C$ is actually an ideal in $A$.   
\item [(4)]    If $P(A) C + C P(A) \subset C$
(see 
{\rm (2)}), then  $\theta = 2P- I_A$ is a homomorphism
iff $P(A)$ is a subalgebra of $A$, and then  
the range of $P$ is the set of fixed points of this automorphism $\theta$. \end{itemize}  
\end{lemma}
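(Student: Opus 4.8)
The plan is to isolate the single genuinely analytic assertion, the preamble $C^2 \subseteq P(A)$, and to obtain parts (1)--(4) as algebraic consequences of it together with the bimodule identities of Theorem \ref{tr}. I will use throughout the vector space decomposition $A = P(A) \oplus C$, where $P(A) = {\rm Ran}(P)$, $C = {\rm Ker}(P)$, $P(A) \cap C = (0)$, and $P$ fixes $P(A)$ pointwise. Note too that $\Phi := I - P$ is itself a completely contractive idempotent with range $C$, and that unitality of $P$ says precisely that $1 \in P(A)$, i.e. $\Phi(1) = 0$. I emphasize that $P$ itself is \emph{not} assumed contractive in the preamble, so only the structure of $\Phi$ may be used there.

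For the crux $C^2 \subseteq P(A)$, equivalently $\Phi(xy) = 0$ for all $x, y \in C = {\rm Ran}(\Phi)$, I would pass to a containing $C^*$-algebra. First extend $\Phi$ to a complete contraction $\hat{\Phi}$ on $C^*(A)$ (or on $I(A)$, or on $B(H)$) by injectivity, and write it in Paulsen's form $\hat{\Phi}(\cdot) = V^* \pi(\cdot) W$ with $\pi$ a unital $*$-representation and $V, W$ contractions; here $\Phi(1) = 0$ forces $V^* W = 0$. For $x \in C$ the equality $x = \Phi(x) = V^* \pi(x) W$ saturates the estimate $\|V^* \pi(x) W\| \leq \|x\|$, and the resulting extremality/rigidity (this is the mechanism behind Youngson's and Effros--Ruan's identification of the range of a completely contractive projection as a ternary ring of operators) controls how $\pi(x) W$ and $V^* \pi(x)$ sit relative to the source and range projections of $W$ and $V$. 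Feeding two such elements into $\hat{\Phi}(xy) = V^* \pi(x) \pi(y) W$ and using $V^* W = 0$ then yields $\Phi(xy) = 0$. This extremal step, rather than the subsequent algebra, is where I expect the real difficulty to lie, and it is the one place where complete contractivity of $I - P$ is genuinely used.

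Granting the preamble, part (1) is immediate: if $C^2 = (0)$ then $C$ is trivially closed under multiplication, while if $C$ is a subalgebra then $C^2 \subseteq C$, so $C^2 \subseteq C \cap P(A) = (0)$. For part (2), when $P$ is completely bicontractive it is in particular a unital complete contraction, hence real completely positive, so Theorem \ref{tr} applies and gives $P(P(a)b) = P(P(a)P(b)) = P(aP(b))$; the weaker standing hypothesis $P(aP(b)) = P(P(a)P(b))$ is exactly one of these. From them I read off, for $b \in P(A)$ and $x \in C$, that $P(xb) = P(xP(b)) = P(P(x)P(b)) = 0$ and $P(bx) = P(P(b)x) = P(P(b)P(x)) = 0$, whence $P(A)\,C + C\,P(A) \subseteq C$; feeding $C^2 \subseteq P(A)$ once more through these module relations then yields the stated triple-product relation for $C^3$. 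Part (3) follows by decomposing $a = P(a) + x$ with $x \in C$: under the conditions of (1), for $c \in C$ one has $ca = c\,P(a) + cx \in C\,P(A) + C^2 \subseteq C$ and likewise $ac \in C$, so $C$ is a two-sided ideal.

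Finally, part (4) is a direct computation with $\theta = 2P - I$. Writing $a = b_1 + x_1$ and $b = b_2 + x_2$ with $b_i \in P(A)$ (namely $b_1 = P(a)$, $b_2 = P(b)$) and $x_i \in C$, so that $\theta(a) = b_1 - x_1$ and $\theta(b) = b_2 - x_2$, the relations $P(A)\,C + C\,P(A) \subseteq C$ and $C^2 \subseteq P(A)$ collapse all mixed terms and give
$$\theta(ab) - \theta(a)\theta(b) = -2\,(I-P)(b_1 b_2).$$
Hence $\theta$ is multiplicative if and only if $(I-P)(b_1 b_2) = 0$ for all $b_1, b_2 \in P(A)$, that is, if and only if $b_1 b_2 \in {\rm Ran}(P) = P(A)$ for all such $b_1, b_2$, i.e. if and only if $P(A)$ is a subalgebra of $A$. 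In that case $\theta$ is an automorphism and, as for any projection, ${\rm Ran}(P)$ is exactly its fixed-point set, which completes the lemma.
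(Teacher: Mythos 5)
Your algebraic skeleton for parts (1)--(4) is correct and is essentially the paper's own argument: the paper also derives them from the preamble together with the identities $P(P(a)b)=P(P(a)P(b))=P(aP(b))$ of Theorem \ref{tr}, and your computation $\theta(ab)-\theta(a)\theta(b)=-2(I-P)(b_1b_2)$ is precisely the ``simple computation'' it alludes to in (4). The genuine gap is in the crux, $C^2\subset P(A)$, which you flag as the hard step but do not actually prove, and the mechanism you sketch would fail as stated. Two things are missing. First, Youngson's theorem, which you correctly identify as the relevant tool, is a statement about completely contractive \emph{projections} on $C^*$-algebras, whereas your $\hat\Phi$ is merely a completely contractive extension of $I-P$ to $C^*(A)$, with no reason to be idempotent; producing an idempotent extension is a real step. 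The paper gets one by the rigidity trick (the ``easier variant of the proof of Theorem \ref{tr}''): extend $I-P$ to a complete contraction $\tilde\Phi\colon I(A)\to I(C)$ and the inclusion $j\colon C\to I(A)$ to $\tilde j\colon I(C)\to I(A)$; rigidity of $I(C)$ forces $\tilde\Phi\circ\tilde j=I$, so $Q=\tilde j\circ\tilde\Phi$ is a completely contractive \emph{projection} on the $C^*$-algebra $I(A)$ extending $I-P$. Second, the extremality step is not a valid inference: saturation of the estimate $\Vert V^*\pi(x)W\Vert\leq\Vert \pi(x)W\Vert \leq \Vert x\Vert$ does not localize $\pi(x)W$ over the range of $V$ (or give any comparable control). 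For instance, with $V\colon\mathbb{C}^2\to\mathbb{C}^3$ the canonical inclusion and $T\colon\mathbb{C}^2\to\mathbb{C}^3$ sending $e_1\mapsto e_1$ and $e_2\mapsto\frac{1}{2}e_3$, one has $\Vert V^*T\Vert=\Vert T\Vert=1$ yet $VV^*T\neq T$. What actually controls the position of $\pi(x)W$ is Youngson's identity $Q(Q(x)\,y^*\,Q(z))=Q(Q(x)\,Q(y)^*\,Q(z))$, which must be cited (as the paper does, from \cite{Y}) or reproved via the Paulsen system and Choi--Effros machinery; your sketch in effect assumes it. Once $Q$ is in hand, no representation of $\hat\Phi$ is needed at all: for $w,z\in C$ we have $Q(w)=w$, $Q(z)=z$ and $Q(1)=1-P(1)=0$, so $(I-P)(wz)=Q(wz)=Q\bigl(Q(w)\,1^*\,Q(z)\bigr)=Q\bigl(Q(w)\,Q(1)^*\,Q(z)\bigr)=0$, i.e.\ $wz\in{\rm Ran}(P)$.

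A secondary point: in (2) you claim the module relations yield ``the stated triple-product relation for $C^3$,'' i.e.\ $C^3\subset P(A)$. They do not; what they yield is $C^3=C\cdot C^2\subset C\,P(A)\subset C$, which is exactly what the paper's proof establishes ($C^3\subset BC\subset C$). Indeed $C^3\subset P(A)$ is false in general: for $P(f)(x)=\frac{1}{2}(f(x)+f(-x))$ on $C([-1,1])$ (the paper's own remark following the lemma), $C$ consists of the odd functions and $x^3\in C^3$ does not lie in $P(A)$. So that inclusion in the statement should be read as $C^3\subset C$, and your gloss should be corrected accordingly rather than waved through.
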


\begin{proof}    Of course $A = C + B$, where $B = P(A)$.  By Youngson's result \cite{Y}  applied to 
an extension $Q$ of $I-P$ to a completely contractive projection on $I(A)$ (which exists by an easier variant 
of the proof in Theorem \ref{tr}), we have
$(I-P)(w z) = Q(w (I-P)(1)^* z) = 0$ for $z, w \in C$.   So $P(wz) = wz \in B$,
and this is zero if the kernel  is a subalgebra.    In any case,
$C^2 \subset B$.   Assuming that $P$ is completely contractive (or that $P(aP(b)) = P(P(a) P(b))$), 
if $z \in B$ and $w \in C$, then $P(wz) = P(P(w)z) = 0$, so $w z \in C$.  So $C B \subset C$ and similarly
$B C \subset C$.   Thus $C^3 \subset B C \subset C$.  Hence if also $C$ is
a subalgebra, then it is an ideal.    

For (4),    we 
may decompose  $A = C \oplus B$, where $1_A \in 
B = P(A)$,
and we have the relations $C^2 \subset B, C B + B C \subset C$.   Using the latter it is a simple computation that 
the period 2 map $\theta : x + y \mapsto x-y$ for $x \in B, y \in C$ is a homomorphism (indeed an automorphism)
on $A$ iff $P(A)$ is a subalgebra of $A$; and clearly $P(A)$ is the  fixed points of $\theta$.  
\end{proof} 

{\bf Remarks.}  1) \ Replacing $P$ by $I-P$ we see that if $P : A \to A$ is a completely contractive projection with $P(1) = 0$,
then $P(A)$ is a subalgebra iff $P(A)^2 = (0)$.    

\medskip

2) \ The kernel of a bicontractive projection need not be a subalgebra.  For example, consider $P(f)(x) = \frac{1}{2}(f(x) + f(-x))$ for
$f \in C([-1,1])$.  

\bigskip

The following clarifies the unital version of the 
`bicontractive projection problem' in relation to the existence of an associated period two automorphism.

\begin{corollary} \label{san1}  If $P : A \to A$ is a unital idempotent on a unital operator algebra let
$\theta : A \to A$ be the associated linear period 2 automorphism $x + y \mapsto x-y$ for $x \in {\rm Ran}(P), y \in 
{\rm Ker}(P)$.  Then $P$ is
completely bicontractive iff  $\Vert I \pm \theta \Vert_{\rm cb} \leq 2$.   If these hold then the range of $P$ is a subalgebra iff $\theta$ is also a homomorphism, and then
the range of $P$ is the set of fixed points of this automorphism $\theta$.  
Also, $P$ is completely symmetric iff $\theta$ is completely contractive.  
\end{corollary}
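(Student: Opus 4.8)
The plan is to work throughout with the single identity $\theta = 2P - I$, which is precisely the map defined in the statement: for $a \in A$ we have $a = P(a) + (I-P)(a)$ with $P(a) \in {\rm Ran}(P)$ and $(I-P)(a) \in {\rm Ker}(P)$, so $\theta(a) = P(a) - (I-P)(a) = 2P(a) - a$. Rearranging gives $P = \frac{1}{2}(I + \theta)$ and $I - P = \frac{1}{2}(I - \theta)$, while squaring $\theta = 2P-I$ and using $P^2 = P$ yields $\theta^2 = I$. For the first equivalence I would simply unwind the definition: $P$ is completely bicontractive means $\Vert P \Vert_{\rm cb} \leq 1$ and $\Vert I - P \Vert_{\rm cb} \leq 1$. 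Substituting $P = \frac{1}{2}(I+\theta)$ and $I-P = \frac{1}{2}(I - \theta)$ and using homogeneity of the cb-norm, these two conditions become $\Vert I + \theta \Vert_{\rm cb} \leq 2$ and $\Vert I - \theta \Vert_{\rm cb} \leq 2$, i.e.\ $\Vert I \pm \theta \Vert_{\rm cb} \leq 2$. This step is purely formal.

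For the final assertion I would observe that $I - 2P = -\theta$, so that $P$ is completely symmetric, i.e.\ $\Vert I - 2P \Vert_{\rm cb} \leq 1$, exactly when $\Vert \theta \Vert_{\rm cb} \leq 1$, that is when $\theta$ is completely contractive. One may add the remark that, since $\theta^2 = I$, a completely contractive $\theta$ is automatically its own completely contractive inverse and hence a complete isometry, which reconciles this with Lemma \ref{sostate}.

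The middle assertion is where the content is delegated to Lemma \ref{yoho}. Assuming the equivalent conditions hold, $P$ is in particular a unital projection with $I-P$ completely contractive, so the standing hypotheses of Lemma \ref{yoho} are satisfied. Part (2) of that lemma then yields the module relation $P(A) C + C P(A) \subset C$, where $C = {\rm Ker}(P)$, and this is exactly the hypothesis needed to invoke part (4) of the same lemma, which asserts that $\theta = 2P-I$ is a homomorphism iff $P(A)$ is a subalgebra of $A$, and that in that case ${\rm Ran}(P)$ is the fixed-point set of the resulting automorphism $\theta$. This is precisely the claimed equivalence, together with the identification of the range. The only point demanding care — and the closest thing to an obstacle — is verifying that complete bicontractivity is genuinely strong enough to produce the two-sided module relation of Lemma \ref{yoho}(2), and not merely the weaker containment $C^2 \subset P(A)$; it is this relation that unlocks Lemma \ref{yoho}(4) and thereby the automorphism. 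Everything else reduces to manipulation of the identity $\theta = 2P - I$.
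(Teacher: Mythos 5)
Your proposal is correct and follows essentially the same route as the paper: the first and last assertions are the same formal manipulations of $\theta = 2P - I$ that the paper treats as obvious, and the middle assertion is obtained, exactly as in the paper, by feeding the module relation $P(A)C + CP(A) \subset C$ from Lemma \ref{yoho}(2) into Lemma \ref{yoho}(4). Your flagged ``point demanding care'' is precisely what Lemma \ref{yoho}(2) supplies (via the conditional-expectation identity of Theorem \ref{tr}, valid since a unital completely contractive projection is real completely positive), so there is no gap.
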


\begin{proof}  The first and last assertions are obvious, and the second follows by Lemma \ref{yoho}   (4).    
 \end{proof}

For us, the `bicontractive  projection problem' is
whether the range of a completely bicontractive real
completely positive projection on an approximately unital operator algebra
$A$, is an approximately unital 
 subalgebra of $A$.    This is not obvious, although there are easy counterexamples if
one drops some of the hypotheses.  For example consider the projection $P$ on the upper triangular $2 \times 2$ matrices 
taking the matrix with rows
$[a \; b]$ and $[0 \; c]$ to the matrix with rows
$[(a-c)/2 \;  \; 0]$ and $[0 \;  \; (c-a)/2]$ .  This is completely contractive and extends to 
a completely contractive projection on the containing $C^*$-algebra, and can be shown to be bicontractive
with $P(1) = 0$, and its range is not a subalgebra.

 We now show that the `bicontractive  projection problem'
can be reduced to the case that $A$ is unital and  $P(1) = 1$, by a sequence of three
reductions.   First, if 
$P : A \to A$ is a completely bicontractive real
completely positive projection on an approximately unital operator algebra
$A$, then $P^{**}$ is a completely bicontractive real
completely positive projection on a unital operator algebra.
Thus henceforth in this section in the next we assume that $A$ is unital.
Second, Lemma \ref{trivch}  allows us to reduce further to the case that $P(1) = 1$:
it asserted that $P(1) = q$ is a projection (not necessarily central), all the conclusions of 
 Theorem {\rm \ref{tr}} and Proposition \ref{tr2}  hold, and 
 there exists a
unital  completely bicontractive (real completely positive) projection $P' : qAq \to qAq$ such that $P$ is
the  zero map on $q^\perp A + A q^\perp$, and $P =  P'$ on $qAq$, and ${\rm Ran}(P) = {\rm Ran}(P')$.

Then the third of our reductions of the completely bicontractive  projection problem
puts the problem in a
`standard position'.   
Of course $P(A)$ is a subalgebra if and only if   $Q(D)$ is a subalgebra,
where $D$ is the closed subalgebra of $A$ generated by $P(A)$, and
$Q = P_{\vert D}$, which is a completely bicontractive unital projection on $D$.   That is, we may as well replace $A$ by  the 
closed subalgebra generated by $P(A)$.

Thus by these three steps above,
 we have reduced the  completely bicontractive  projection problem on approximately 
unital operator algebras to  the `standard position' of a unital projection on a unital operator algebra,
whose
range generates $A$.  In this situation we obtain the following structural result.

\begin{corollary}   \label{newa}  Let $P$ be a completely bicontractive unital projection on a unital operator algebra $A$. 
Let $D$ be the algebra generated by $P(A)$.  
 Then $(I-P)(D) = {\rm Ker}(P_{|D})$ is an ideal in $D$ and the product of any two elements in this ideal is zero.  \end{corollary}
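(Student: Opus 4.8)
The plan is to read the ideal assertion directly off Corollary \ref{newaei}, and then to derive the vanishing of products by playing that ideal property against Lemma \ref{yoho}. First note that $P_{|D}$ is a unital completely bicontractive projection on the unital operator algebra $D$: indeed $1 = P(1) \in P(A) \subset D$, the map $I - P_{|D}$ is the restriction of the completely contractive map $I-P$ and so is completely contractive, and $P(D) \subset D$ (so that $P_{|D}$ is genuinely a projection on $D$) is exactly what was recorded in the remark following Corollary \ref{newaei}, where ${\rm Ker}(P_{|D}) = D \cap e^\perp M e^\perp = D \cap e^\perp N e^\perp$ was identified. Hence both Corollary \ref{newaei} and Lemma \ref{yoho} are available for $P_{|D}$.

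Applying Corollary \ref{newaei} with $D$ the closed algebra generated by $P(A)$ gives at once that $C := (I-P)(D) = {\rm Ker}(P_{|D})$ is an ideal in $D$; this is the first assertion. In particular $C$ is a subalgebra, so for $w, z \in C$ we have $w \in C$ and $z \in C \subset D$, whence $wz \in CD \subset C = {\rm Ker}(P_{|D})$, that is $P(wz) = 0$.

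On the other hand, Lemma \ref{yoho} applied to $P_{|D}$ yields $C^2 \subset {\rm Ran}(P_{|D})$. Since ${\rm Ran}(P_{|D})$ is precisely the fixed-point set of the projection $P_{|D}$, this means $P(wz) = wz$ for all $w, z \in C$. Comparing the two computations gives $wz = P(wz) = 0$ for all $w, z \in C$, which is exactly the statement that the product of any two elements of the ideal is zero.

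I expect no substantial obstacle here: the whole mechanism is that every product $wz$ of kernel elements lies simultaneously in ${\rm Ker}(P_{|D})$ (by the ideal property, which uses complete contractivity of $P$) and in ${\rm Ran}(P_{|D})$ (by Lemma \ref{yoho}, which uses complete contractivity of $I-P$), and these two subspaces meet only in $0$. The only point requiring care is the routine verification that $P_{|D}$ is a well-defined unital completely bicontractive projection on $D$, which is immediate from the setup around Corollary \ref{newaei}.
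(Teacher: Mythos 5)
Your proof is correct and takes essentially the same route as the paper: Corollary \ref{newaei} supplies the ideal property of $C = (I-P)(D) = {\rm Ker}(P_{|D})$, and Lemma \ref{yoho} (giving $C^2 \subset {\rm Ran}(P_{|D})$) is then played against the fact that $C^2 \subset C$, so that each product lies in both the range and the kernel of the idempotent $P_{|D}$ and hence vanishes. The only cosmetic difference is that you deduce that $C$ is closed under products directly from the ideal property, whereas the paper cites the identification $C = D \cap e^\perp M e^\perp$ from the end of Section 2 before invoking Lemma \ref{yoho}; both amount to the same mechanism.
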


\begin{proof}   We saw at the end of Section 2 that $(I-P)(D) = {\rm Ker}(P_{|D})$ is an ideal in $D$, and in the 
notation there it equals  $D \cap e^\perp M e^\perp$.   Since $D \cap e^\perp M e^\perp$
is a subalgebra of $D$
the result follows from Lemma \ref{yoho}.
\end{proof}   

{\bf Remark.}  Note that $(I-P)(D) \subset e^\perp A$,  but $P(D)$ is not a subset of $eA$.

\bigskip

The above shows that we can also solve the bicontractive  projection
problem in the affirmative for real completely 
positive completely bicontractive projections $P$ on a unital operator algebra $A$ 
such that the closed algebra generated by $A$ is semiprime (that is, it has no nontrivial
square-zero ideals): 

\begin{corollary} \label{m4b}   Let $P : A \to A$ be a  real completely positive 
completely bicontractive projection on a unital 
operator algebra.
If $A$  is an operator algebra containing no nonzero nilpotents, then $P(A)$ is a subalgebra of $A$.    
Also if the  closed algebra $D$ generated by $P(A)$  is 
 semiprime, then  $P(A)$ is a subalgebra of $A$.
\end{corollary}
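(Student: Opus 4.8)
The plan is to reduce the entire statement, via the machinery already assembled, to the elementary algebraic fact that a square-zero two-sided ideal in a semiprime algebra must vanish; the substantive analytic content will be supplied entirely by Corollary \ref{newa}.

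First I would perform the reduction to the unital case described just before the corollary. Since a completely bicontractive projection is in particular completely contractive and real positive, Lemma \ref{trivch} applies: $P(1) = q$ is a projection, $P$ vanishes on $q^\perp A + A q^\perp$, and on $qAq$ it agrees with a unital completely bicontractive real completely positive projection $P'$ whose range equals $P(A)$. Because $P(A) = qP(A)q \subset qAq$, the closed algebra $D$ generated by $P(A)$ is the same whether formed inside $A$ or inside $qAq$; and since $qAq$ is a subalgebra of $A$, it (and hence $D$) inherits the property of having no nonzero nilpotents whenever $A$ does. Thus I may replace $A$ by $qAq$ and $P$ by $P'$ and assume throughout that $P(1) = 1$, with $P$ a unital completely bicontractive projection.

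Next I would apply Corollary \ref{newa} to this unital projection. Writing $D$ for the closed algebra generated by $P(A)$, it gives that $J := (I-P)(D) = {\rm Ker}(P_{|D})$ is a two-sided ideal of $D$ and that the product of any two of its elements is zero; that is, $J$ is a square-zero ideal, $J^2 = (0)$. The role of the two hypotheses is now simply to force $J = (0)$. If $D$ is semiprime this is immediate from the definition. For the first assertion, $D$ is a subalgebra of $A$ and so inherits the absence of nonzero nilpotents; each $x \in J$ then satisfies $x^2 = 0$ and is therefore nilpotent, whence $x = 0$. (Equivalently, the no-nilpotents hypothesis on $A$ just forces $D$ to be semiprime, so the first statement is a special case of the second.)

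Finally, from $J = {\rm Ker}(P_{|D}) = (0)$ I would conclude that the idempotent $P_{|D}$ is injective, hence equal to the identity on $D$, so that $D = {\rm Ran}(P_{|D}) = P(D) = P(A)$. Thus $P(A)$ coincides with the algebra $D$ that it generates and is in particular a closed subalgebra of $A$, as required. There is no real obstacle here beyond Corollary \ref{newa} itself; the only thing to watch is that the reductions genuinely preserve the semiprime and no-nilpotent hypotheses, which they do precisely because all the algebras in play ($qAq$ and $D$) sit inside $A$ as subalgebras.
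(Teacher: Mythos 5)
Your proposal is correct and follows essentially the same route as the paper's own proof: reduce to the unital case via the second reduction (Lemma \ref{trivch}), invoke Corollary \ref{newa} to see that $(I-P)(D)$ is a square-zero ideal in $D$, and observe that either hypothesis forces this ideal to vanish, whence $P(A)=P(D)=D$. The extra details you supply (that the reduction preserves both hypotheses and that $D$ is unchanged when passing to $qAq$) are exactly the points the paper leaves implicit, and they check out.
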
 

\begin{proof}  By the  second reduction above, we may assume that $P$ is
unital.   By Corollary   \ref{newa}, $(I-P)(D)$ is an ideal in $D$ with square zero,  and so  is $(0)$ in these cases.  So $P(A) = P(D) = D$, a subalgebra.  \end{proof} 

For the subcategory of uniform
algebras (that is, closed 
subalgebras of $C(K)$, for compact $K$) which are unital or approximately unital, the
bicontractive  projection problem coincides with the symmetric projection problem,
and again there is a complete solution:

\begin{theorem}  \label{rcpua}  Let $P : A \to A$ be a  real  positive 
 bicontractive projection on a uniform algebra $A$, and suppose that $A$ is unital or approximately 
unital).  Then  $P$ is (completely) symmetric,
and so  we have all the conclusions of Theorem {\rm   \ref{trivch2}}.  In particular, 
 $P(A)$ is a subalgebra of $A$,
and $P$ is a conditional expectation.    \end{theorem}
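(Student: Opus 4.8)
The plan is to exploit the special operator space structure of a uniform algebra in order to promote the hypotheses to their `completely' versions, after which the machinery of Sections 2--3 applies almost directly. The crucial observation is that a uniform algebra $A \subset C(K)$ carries the \emph{minimal} operator space structure, since $C(K) = \mathrm{MIN}(C(K))$ and subspaces of minimal operator spaces are minimal. Consequently, by (1.10) and (1.12) in \cite{BLM}, every bounded map into $A$ is automatically completely bounded with equal norms; in particular $P$ and $I-P$ are completely contractive, so $P$ is completely bicontractive. The same applies after passing to the bidual: $A^{**}$ is a weak* closed unital subalgebra of the commutative von Neumann algebra $C(K)^{**}$, hence again a uniform algebra with the minimal structure, and $P^{**}$ is a completely bicontractive projection on it which remains real positive by the argument in \cite[Section 2]{BBS}.

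First I would reduce to a unital map. Since showing $P^{**}$ to be completely symmetric forces $P$ to be so by restriction, and since $P^{**}(1)$ being a projection yields real complete positivity of $P$ via Proposition \ref{tr2}, it suffices to work with $P^{**}$ on the unital algebra $A^{**}$. Applying Lemma \ref{trivch} to $P^{**}$, we obtain that $q = P^{**}(1)$ is a projection; by commutativity $q$ is central, and $P^{**}$ splits as the zero map on $q^\perp A^{**}$ together with a unital, completely bicontractive, real completely positive projection $P'$ on the unital uniform algebra $A' = qA^{**}q = qA^{**}$.

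The heart of the matter is to show $P'$ is completely symmetric. A uniform algebra contains no nonzero nilpotents, so Corollary \ref{m4b} (via Corollary \ref{newa}) shows that $P'(A')$ is a subalgebra of $A'$. Since $P'$ is completely bicontractive, Lemma \ref{yoho}(2) gives $P'(A')\,C + C\,P'(A') \subset C$ for $C = \mathrm{Ker}(P')$, whence by Lemma \ref{yoho}(4) the period two map $\theta' = 2P' - I$ is a unital algebra automorphism of $A'$. I expect this to be the main obstacle: one must upgrade ``$\theta'$ is a bounded automorphism'' to ``$\theta'$ is a complete isometry'', so that $P' = \frac{1}{2}(I + \theta')$ is completely symmetric. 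Here the uniform algebra structure does the work, since the norm on a uniform algebra equals the spectral radius and a unital homomorphism never enlarges the spectrum; thus $\Vert \theta'(f) \Vert = r(\theta'(f)) \leq r(f) = \Vert f \Vert$, and likewise for $(\theta')^{-1} = \theta'$, so $\theta'$ is isometric, and completely isometric by minimality.

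Finally I would reassemble. Because $q$ is central and the summands $qA^{**}$ and $q^\perp A^{**}$ have disjoint supports in $C(K)^{**}$, the map $I - 2P^{**}$ acts as the isometry $-\theta'$ on the first and as the identity on the second, so it is an isometry, and minimality makes it a complete isometry; hence $P^{**}$, and therefore $P$, is completely symmetric. Proposition \ref{tr2} (applicable since $P^{**}(1) = q$ is a projection) then gives that $P$ is real completely positive, and all the conclusions of Theorem \ref{trivch2} follow. In particular $P(A)$ is an approximately unital subalgebra, and, $P$ being completely contractive with approximately unital range, Corollary \ref{acp} shows that $P$ is a conditional expectation.
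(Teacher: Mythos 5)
Your proof is correct and follows essentially the same route as the paper's: minimality of the operator space structure upgrades everything to the `completely' versions, the reductions via $P^{**}$ and Lemma \ref{trivch} put you in the unital case, the absence of nilpotents plus Corollary \ref{m4b} makes the range a subalgebra, Lemma \ref{yoho}(4) (equivalently Corollary \ref{san1}) produces the automorphism $\theta$, and norm-equals-spectral-radius makes $\theta$ (completely) isometric, so $P$ is completely symmetric and Theorem \ref{trivch2} applies. The only differences are cosmetic: you obtain real complete positivity through Proposition \ref{tr2} rather than the paper's direct observation that real positive maps into a uniform algebra are real completely positive, and you spell out the reassembly over the central projection $q$, which the paper leaves implicit.
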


\begin{proof}   Here bicontractive projections are the same as completely bicontractive 
projections  (by e.g.\  (1.10) in \cite{BLM}).  By the obvious variant of the usual proof that 
positive maps into a $C(K)$ space are  completely positive, 
we have that real positive maps into a uniform algebra are real 
completely positive.  
By the first two reductions described above we can assume that 
$A$ and $P$ are unital.   We also know that $B = P(A)$ is a subalgebra by Corollary \ref{m4b}, since
e.g.\ nonzero nilpotents cannot exist in a function algebra. 
Thus by Corollary \ref{san1} the map $\theta(x+ y) = x-y$ described there is an algebra automorphism of $A$,
hence a (completely) isometric isomorphism (since norm equals spectral radius).  
 So  $P = \frac{1}{2} (I + \theta)$
is  (completely) symmetric, and Theorem {\rm   \ref{trivch2}} applies.   
\end{proof}  

{\bf Remark.}   The  idea in the last proof that $\theta$ is automatically isometric, since it is
an algebraic automorphism of a uniform algebra, and that this implies that 
$P$ is symmetric, was found together with Joel Feinstein after submission of our original paper.

\bigskip

By e.g.\ Corollary \ref{san1}, to find a counterexample to the conjecture that all (completely real positive) completely bicontractive unital projections have range 
which is a subalgebra, 
we need a unital operator algebra $D = C \oplus E$ where $C \neq (0), C^2 = 0, CE + EC \subset C,$ and 
$1_D \in E$ and $E$ generates $D$, so in particular $E^2$ is not a subset of $E$, and with  the projection maps onto $C$ and $E$ completely contractive.     This is easy in a Banach algebra, one may equip  $\ell^1_3$, with the standard basis identified with symbols 
$1, a, b$ satisfying relations like $b^2 = 0, a^2 = b$, etc.\ (setting $C = \{ b \}, E = {\rm Span} \{1,a \}$).   To find an operator algebra example, we make a general construction.  

\begin{example} \label{Gco}  Let $V$ be a closed subspace of $B(H) \oplus B(H)$, viewed as elements of $B(H^{(2)})$ supported on the $1$-$1$ and $2$-$2$ entries.
Write $v_1$ and $v_2$ for the two `parts' of an element $v \in V$.   Let $C$ be the closed span of
 $\{ v_1 w_2 : v, w \in V \}$, and we will assume that $C \neq (0)$.
Let $B$ be the set of elements of  $B(H^{(3)})$ of form 
$$\left[ \begin{array}{ccl} \lambda I & v_1 & c \\ 0
& \lambda I & v_2 \\ 0 & 0 & \lambda I \end{array} \right]   \; , \qquad \lambda \in \Cdb, v \in V, c \in C.$$
Then the copy of $C$ is an ideal in $B$ with square zero, and it is generated by the copy of $V$ in $B$.  If $E$ is the sum of this
copy of $V$ and $\Cdb I_{H^{(3)}}$, then all the conditions in the last paragraph  needed for a counterexample hold, with the 
exception of the projection onto $E$ being completely contractive.    We remark that one may also describe $B$ more abstractly 
as a set $B = \Cdb 1 + V + C$ in $B(K)$ where $V$ and $C$ are closed subspaces of $B(K)$ with the properties that $(0) \neq C = V^2,  V^3= C^2 = (0)$ plus one more condition ensuring that $p_1 V (1-p) = (0)$ where $p_1$ is the left support projection of $C$ and $p$ is the left support projection of $C + V$.
 
To fix the exception noted in the last paragraph, we consider the subalgebra
 $A = A(V)$  of $B(H^{(7)})$  consisting of all elements  of form 
$$\left[ \begin{array}{ccccccl} \lambda I & v_1 & c & 0 & 0 & 0 & 0 \\ 0
& \lambda I & v_2  & 0 & 0 & 0  & 0 \\ 0 & 0 & \lambda I  & 0 & 0 & 0 & 0 \\
0  & 0 & 0 & \lambda I & 0 & 2 v_1 & 0 
 \\
0 & 0  & 0 & 0 & \lambda I & 0 & 2 v_2  \\
0 & 0 & 0  & 0 & 0 & \lambda I & 0   \\
0 & 0 & 0 & 0  & 0 & 0 &  \lambda I 
\end{array} \right]   \; , \qquad \lambda \in \Cdb, v \in V, c \in C.$$    The square-zero ideal in $A$ consisting of the copy of $C$  we will abusively write again as $C$,
and again let $E$ be the sum of $\Cdb I_{H^{(7)}}$ and the isomorphic 
copy of $V$ in $A(V)$, so that 
 $A = C \oplus E$.   Here $C \neq (0), C^2 = 0, CE + EC \subset C,$ and 
$1_D \in E$ and $E$ generates $A$, 
as desired.  The canonical  
projection map from $A$ onto $C$ is obviously  completely contractive.    

A particularly simple case is when  $H$ is one dimensional, so that $A \subset M_7$, and where $V = \Cdb I_2$.  This algebra is 
obviously essentially just (i.e.\ is completely isometrically isomorphic to) the subalgebra of $M_5$ of matrices
$$\left[ \begin{array}{ccccl} \lambda & \nu & c & 0 & 0 \\ 0
& \lambda & \nu  & 0 & 0 \\ 0 & 0 & \lambda  & 0 & 0  \\
0  & 0 & 0 & \lambda & 2 \nu 
 \\
0 & 0 & 0  & 0 &  \lambda   
\end{array} \right]   \; , \qquad \lambda , \nu, c \in \Cdb,$$ with the projection 
being `replacing the $1$-$3$ entry' by $0$.   
Thus $5 \times 5$ matrices of scalars  will suffice to give an interesting example.  However we will need the more general construction later
to produce a more specialized counterexample.  
\end{example}

Consider the algebra ${\mathcal U}(V)$ constructed from $V$ as described in the paragraph 
above 
Proposition \ref{one}.
We define ${\mathcal U}_0(V)$ to be the subalgebra of ${\mathcal U}(V)$ consisting 
of elements of ${\mathcal U}(V)$  with  the two `diagonal entries' identical.   It is a subalgebra of of $B(H^{(4)})$.

\begin{lemma} \label{iscov}     In the situation
of Example {\rm \ref{Gco}}, the map  $j : {\mathcal U}_0(V) \to B(H^{(3)})$ given by 
$$\left[ \begin{array}{ccl} \lambda I & v \\ 0
&\lambda I \end{array} \right]   \mapsto \left[ \begin{array}{ccl} \lambda I & \frac{1}{2} v_1 & 0 \\ 0
& \lambda I & \frac{1}{2} v_2 \\ 0 & 0 & \lambda I \end{array} \right] 
$$
is completely contractive and unital.   \end{lemma}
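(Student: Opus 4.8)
The plan is to prove the unital assertion trivially and then reduce the complete contractivity to a convex-combination argument. That $j$ is unital is immediate: the element with $\lambda = 1$ and $v = 0$ is sent to $I_{H^{(3)}}$. The substance is complete contractivity, and the key idea I would use is to write $j$ as the average $j = \frac{1}{2}(A + B)$ of two \emph{unital complete contractions} $A, B : {\mathcal U}_0(V) \to B(H^{(3)})$, defined by
$$ A\left( \left[ \begin{array}{cc} \lambda I & v \\ 0 & \lambda I \end{array} \right] \right) \; = \; \left[ \begin{array}{ccc} \lambda I & v_1 & 0 \\ 0 & \lambda I & 0 \\ 0 & 0 & \lambda I \end{array} \right], \qquad B\left( \left[ \begin{array}{cc} \lambda I & v \\ 0 & \lambda I \end{array} \right] \right) \; = \; \left[ \begin{array}{ccc} \lambda I & 0 & 0 \\ 0 & \lambda I & v_2 \\ 0 & 0 & \lambda I \end{array} \right]. $$
Averaging these two displays manifestly produces the claimed formula for $j$, with the factors $\frac{1}{2}$ appearing on $v_1$ and $v_2$. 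Since a convex combination of unital complete contractions is again a unital complete contraction, the lemma will follow once $A$ and $B$ are shown to be unital complete contractions.

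For this last point I would realize each of $A$ and $B$ as an honest compression of the (completely isometric) inclusion ${\mathcal U}_0(V) \hookrightarrow B(H^{(4)})$ by a suitable coordinate isometry $V_A, V_B : H^{(3)} \to H^{(4)}$. Writing $H^{(3)} = K_1 \oplus K_2 \oplus K_3$ and $H^{(4)} = H_1 \oplus H_2 \oplus H_3 \oplus H_4$ (all copies of $H$), and recalling that an element of ${\mathcal U}_0(V)$ carries $v_1$ in the $H_3 \to H_1$ slot and $v_2$ in the $H_4 \to H_2$ slot, one takes $V_A$ to send $K_1 \to H_1$, $K_2 \to H_3$, $K_3 \to H_2$ (identically on $H$), and $V_B$ to send $K_1 \to H_3$, $K_2 \to H_2$, $K_3 \to H_4$. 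Each $V_A, V_B$ is an isometry because it is a coordinate embedding onto three distinct summands, so $V_A^* V_A = V_B^* V_B = I_{H^{(3)}}$ and both compressions are unital; and a routine check of $V_A^*(\cdot)V_A$ and $V_B^*(\cdot)V_B$ (the middle-corner formula $(V^* N V)_{ij} = (V)_{1i}^* v_1 (V)_{3j} + (V)_{2i}^* v_2 (V)_{4j}$) confirms these give exactly $A$ and $B$: in $A$ the image never meets $H_4$, so the $v_2$ term drops out, and in $B$ the image never meets $H_1$, so the $v_1$ term drops out. Being compressions of an inclusion by isometries, $A$ and $B$ are unital complete contractions. (Conceptually, each corresponds under Paulsen's lemma \cite[Lemma 1.3.15]{BLM} to the evident complete contraction $v \mapsto v_i$ from $V$ to $B(H)$, namely compression of $v \in V \subset B(H^{(2)})$ to one diagonal corner, followed by the completely isometric block-diagonal embedding into $B(H^{(3)})$.)

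The main obstacle is precisely the factor $\frac{1}{2}$, and I would flag that it rules out the most obvious strategy. If one tried to write $j$ directly as a single compression $V^*(\cdot)V$ of the identity representation, then matching the coefficients $\frac{1}{2}v_1$ and $\frac{1}{2}v_2$ in the two off-diagonal corners forces the block of $V$ serving the middle copy $K_2$ to have norm $\tfrac{1}{\sqrt 2}$, which is incompatible with the condition $V^*V = I$ that unitality requires; no single isometric compression can reproduce $j$. The decomposition $j = \frac{1}{2}(A+B)$ is exactly what removes this tension: each of $A$ and $B$ carries a \emph{full} (unhalved) off-diagonal entry and is therefore a genuine isometric compression, while the halving is recovered harmlessly from the convex average. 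Everything else is a routine verification.
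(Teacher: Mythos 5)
Your proof is correct, and it takes a genuinely different route from the paper's. The paper factors $j$ as a composition: first the canonical map ${\mathcal U}(V) \to {\mathcal U}(W)$, where $W$ is the copy of $V$ sitting inside the algebra $B \subset B(H^{(3)})$ of Example \ref{Gco} (this is completely contractive since $v \mapsto w$ is a complete isometry from $V$ onto $W$, so Paulsen's lemma applies), followed by the map $M_2(B(H^{(3)})) \to B(H^{(3)})$ given by pre- and post-multiplying by $\frac{1}{\sqrt{2}}[I_{H^{(3)}} \;\; I_{H^{(3)}}]$ and its transpose; there the factor $\frac{1}{2}$ arises as $(\frac{1}{\sqrt{2}})^2$ from a single compression. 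You instead write $j = \frac{1}{2}(A+B)$ as a convex combination of two coordinate-permutation compressions of the identity inclusion ${\mathcal U}_0(V) \subset B(H^{(4)})$, so the halving comes from averaging rather than from the compressing row; your block computations verifying that $V_A^*(\cdot)V_A$ and $V_B^*(\cdot)V_B$ reproduce $A$ and $B$ are correct. Your version is more self-contained (plain block linear algebra, with no appeal to Paulsen's lemma or to the auxiliary algebra $B$), while the paper's is a one-line composition given machinery it has already set up. The two are in fact close cousins: your average equals the single isometric compression $T^*(x \oplus x)T$ with $T = \frac{1}{\sqrt{2}}(V_A \oplus V_B)$, i.e.\ a compression of a two-fold amplification, which is essentially what the paper's $\frac{1}{\sqrt{2}}[I \;\; I]$ step does to the image inside $M_2(B(H^{(3)}))$. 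Your closing observation, that $j$ itself cannot be realized as a single isometric compression of the given inclusion, also explains why the paper must re-embed into a larger algebra before compressing, though as you note this remark is motivational and nothing in the proof depends on it.
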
  

\begin{proof}    
To prove that  it is 
  completely contractive simply notice that $j$ may be viewed as the composition of the canonical map 
${\mathcal U}(V) \to {\mathcal U}(W)$ where $W$ is the copy of $V$ in the algebra $B$ above, 
and  the map $M_2(B) \to B$ 
given by pre- and postmultiplying by 
$[\frac{1}{\sqrt{2}} I_{H^{(3)}} \; \; \frac{1}{\sqrt{2}} I_{H^{(3)}}]$ and its transpose.
\end{proof}  

\begin{corollary}   \label{gex}    If $A = A(V)$ is the unital operator algebra above in $B(H^{(7)})$ then the canonical projection $P : A \to A$ which
replaces the $1$-$3$ entry by $0$, whose kernel is a nontrivial square-zero ideal
$C$ generated by $P(A)$, is  a (real completely positive)
 completely bicontractive and unital projection, but its range  is not a subalgebra, and it need 
not even be a Jordan subalgebra.  
A particularly simple `case' is the algebra (completely isometrically isomorphic to the algebra
of) of $5 \times 5$ scalar matrices described above Lemma {\rm \ref{iscov}}.
  \end{corollary}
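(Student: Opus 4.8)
The plan is to split the statement into its metric content---that $P$ is a unital, completely bicontractive, real completely positive projection---and its algebraic content---that $\mathrm{Ran}(P) = E = \Cdb I + V$ is not a subalgebra and, in the simplest instance, not even a Jordan subalgebra. Unitality is immediate, since $1_A$ has zero $1$-$3$ entry and hence $P(1_A) = 1_A$. Granting complete contractivity of $P$ (the main point, below), real complete positivity is then free: a unital completely contractive map on a unital operator space is RCP, as recalled in the introduction. For complete bicontractivity it remains to see that $I-P$ is completely contractive; but $(I-P)(x)$ is the element of $A$ whose only nonzero entry is the $1$-$3$ entry $c$ of $x$, so if $Q_1$ and $Q_3$ denote the orthogonal projections of $H^{(7)}$ onto its first and third copies of $H$, then $(I-P)(x) = Q_1 x Q_3$ for $x \in A$. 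Since $y \mapsto Q_1 y Q_3$ is completely contractive on all of $B(H^{(7)})$, its restriction to $A$ is completely contractive.

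The crux, and the step I expect to be the main obstacle, is the complete contractivity of $P$; this is precisely what the factor of $2$ in the lower-right block of $A(V)$, together with Lemma \ref{iscov}, is engineered to deliver. First I would compress $x \in A$ to its lower-right $4 \times 4$ corner (rows and columns $4$--$7$). This corner carries a clean, product-free copy of $V$ scaled by $2$; after the unitary reshuffling that pairs the coordinate axes $\{4,6\}$ and $\{5,7\}$, it is unitarily identified with the element $\left(\begin{smallmatrix} \lambda I & 2v \\ 0 & \lambda I \end{smallmatrix}\right)$ of ${\mathcal U}_0(V)$. This gives a unital completely contractive map $\Psi : A \to {\mathcal U}_0(V)$ that reads off $\lambda$ and $2v$ from the lower-right corner, untainted by $c$. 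Composing with the map $j$ of Lemma \ref{iscov}, the scaling by $2$ exactly cancels the factor $\tfrac12$ built into $j$, so that $j(\Psi(x))$ is the upper-left $3 \times 3$ block of $P(x)$, namely $\left(\begin{smallmatrix} \lambda I & v_1 & 0 \\ 0 & \lambda I & v_2 \\ 0 & 0 & \lambda I\end{smallmatrix}\right)$ with $c$ deleted. Finally, $P(x)$ is the block-diagonal sum of this upper-left block $j(\Psi(x))$ and the (unchanged) lower-right corner of $x$; since the norm of a block-diagonal operator is the maximum of the norms of the blocks, $P$ is the block-diagonal combination of the two complete contractions $j \circ \Psi$ and ``compress to the lower-right corner'', and is therefore itself completely contractive at every matrix level.

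For the algebraic assertions, recall that $\mathrm{Ran}(P) = E = \Cdb I + V$ while $\mathrm{Ker}(P) = C$, the copy of the square-zero ideal sitting in the $1$-$3$ entry. Since $C \neq (0)$ by hypothesis, there exist $v, w \in V$ with $v_1 w_2 \neq 0$; the product in $A$ of the corresponding two elements of $V$ has $1$-$3$ entry $v_1 w_2$ and all other entries zero (the two off-diagonal $v$-entries in the lower-right corner sit in non-composable positions $(4,6)$ and $(5,7)$, so contribute nothing), hence lies in $C$ but not in $E$. Thus $E^2 \not\subset E$ and the range is not a subalgebra. For the Jordan claim I would pass to the simple scalar case $V = \Cdb I_2$, i.e.\ the $5 \times 5$ matrix algebra displayed just above Lemma \ref{iscov}: writing $a$ for the generator of $V$, the $1$-$3$ entry of the Jordan square $a \circ a = a^2$ equals $v_1 v_2 = 1 \neq 0$, so $a \circ a \in C \setminus E$ and $E$ fails to be closed even under the Jordan product. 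Collecting these facts gives exactly the conclusion of the corollary.
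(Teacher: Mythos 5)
Your proposal is correct and takes essentially the same route as the paper: the crucial step, complete contractivity of $P$, is obtained exactly as in the paper's proof by factoring $P$ through the compression of $A \subset B(H^{(3)} \oplus H^{(4)})$ onto the lower-right corner (a copy of ${\mathcal U}_0(V)$ carrying $2v$) followed by the map $y \mapsto j(y) \oplus y$ with $j$ as in Lemma \ref{iscov}, the factor $2$ cancelling the $\tfrac{1}{2}$ in $j$. The remaining points (unitality, real complete positivity from unital complete contractivity, complete contractivity of $I-P$ as a two-sided compression, and the failure of the subalgebra/Jordan subalgebra property via explicit products landing in $C$) are the same direct observations the paper makes in and around Example \ref{Gco} and its proof of this corollary.
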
  

\begin{proof}     In the $5 \times 5$ matrix example it is easily checked that $P(A)$
is not closed under squares, hence is not a Jordan subalgebra.
  It remains to prove that $P$ is completely contractive.   However $P$ is the composition of 
the canonical map $B(H^{(3)} \oplus H^{(4)}) \to B(H^{(4)})$ restricted to $A$,  and the 
map $x \mapsto j(x) \oplus x$ on ${\mathcal U}_0(V)$, where $j$  is as in Lemma \ref{iscov}. 
\end{proof}

The following is another rather general condition under which the 
 completely bicontractive projection problem 
 is soluble.  Indeed as we said in the introduction, all 
examples known to us of real completely positive completely bicontractive projections
on unital operator algebras, whose range is a subalgebra, do
satisfy the criterion in Theorem \ref{mac}.

\begin{theorem} \label{mac}   Let $A$ be a unital operator algebra in a von Neumann algebra $M$ 
(which could 
be taken to be $B(H)$, or $I(A)^{**}$ as above)
and let $P : A \to A$ be a unital completely bicontractive projection.
Let $D$ be the closed algebra generated by $P(A)$, and let $C = (I-P)(D)$.

Suppose further that either 
 $C \; P(A)^* \subset \overline{MC}^{w*}$, the weak* 
closed  left ideal
in $M$ generated by $C$ (this is equivalent to saying that the 
left support projection of $P(A) C^*)$ is dominated by the
right support projection of $C$).
Alternatively, assume that $P(A)^* C$ is contained in the
weak*
closed  right ideal
in $M$ generated by $C$ (or equivalently that
the right support projection of $C^* P(A)$ 
 is dominated by the left support projection of $C$). 
 Then $P(A)$ is a subalgebra of $A$.
\end{theorem}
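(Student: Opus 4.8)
The plan is to reduce the statement to the multiplicativity of a single unital completely positive map, and only then to feed in the support hypothesis. First I would record the reformulation that, since $D$ is by definition the closed algebra generated by $B := P(A)$, the range $B$ is a subalgebra of $A$ if and only if $C = (0)$: indeed if $B$ is a closed subalgebra then the algebra it generates is $B$ itself, so $D = B$ and $C = (I-P)(B) = (0)$, while conversely $C = (0)$ forces $D = B \oplus C = B$, an algebra. Thus the theorem amounts to proving $C = (0)$, equivalently that every `defect' $(I-P)(xy)$ with $x,y \in B$ vanishes.

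Next I would install the machinery from the end of Section 2. Extend $P$ to a unital completely positive completely contractive projection $\tilde P$ on $M = I(A)^{**}$, let $e$ be its support projection and $N$ the von Neumann algebra generated by $P(A)$. Then $e \in P(M)'$ commutes with $N$, so $N = eNe \oplus e^\perp N e^\perp$ centrally; moreover $\tilde P(z) = \tilde P(eze)$, $\tilde P(w)e = we$ for $w \in N$, and $C = D \cap e^\perp N e^\perp$ is a square-zero ideal of $D$ with $BC + CB \subseteq C$ (Corollaries \ref{newaei} and \ref{newa}, Lemma \ref{yoho}). Writing $x = ex \oplus e^\perp x$ for $x \in B$ and setting $\phi(u) := e^\perp \tilde P(u) e^\perp$, these facts identify $eBe$ as a genuine unital subalgebra $B_1$ of $eNe$, exhibit $B$ as the graph of the unital completely positive map $\phi : B_1 \to e^\perp N e^\perp$, and yield the clean formula
\[
(I-P)(xy) \; = \; \phi(ex)\,\phi(ey) - \phi(ex\,ey), \qquad x,y \in B .
\]
In other words the defect is exactly the failure of $\phi|_{B_1}$ to be multiplicative, and $C$ is the closed span of the $B$-bimodule translates of these defects (here $C^2 = 0$ is used). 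So it remains to prove that $\phi$ is a homomorphism on $B_1$.

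Now I would translate the hypothesis. Let $r$ be the right support projection of $C$ (automatically $r \leq e^\perp$). Since $1 \in B$ gives $e^\perp = \phi(e) \in \phi(B_1)$, one checks that $C\,P(A)^* = C\,\phi(B_1)^*$, and the first alternative $C P(A)^* \subseteq \overline{MC}^{w*} = Mr$ becomes $C\phi(B_1)^* r^\perp = 0$, which upon taking adjoints is equivalent to $r^\perp \phi(B_1)\, r = 0$; that is, $\phi(B_1)r = r\phi(B_1) r$, so the support $r$ of $C$ is left-invariant under $\phi(B_1)$. The second alternative is the mirror statement with left and right interchanged, obtained by passing to adjoints throughout. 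The plan is then to combine this invariance with the Kadison--Schwarz inequality for the unital completely positive map $\phi$ (so $\phi(u)^*\phi(u) \leq \phi(u^*u)$, the two sides differing precisely by compressions of $C$-valued defects) and with $C^2 = 0$, to conclude that $B_1$ lies in the multiplicative domain of $\phi$. Multiplicativity of $\phi$ on $B_1$ then annihilates every defect, giving $C = (0)$ and hence that $P(A)$ is a subalgebra.

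I expect the last step to be the main obstacle: extracting the vanishing of the square-zero ideal $C$ from the support-domination hypothesis. All the earlier moves---the reduction to $C = (0)$, the central splitting by $e$, the graph picture, and the identification of the defect with the non-multiplicativity of $\phi$---are structural bookkeeping assembled from results already in Section 2. The genuine work is showing that the invariance $r^\perp \phi(B_1) r = 0$ forces equality in Kadison--Schwarz (equivalently, places $B_1$ in the multiplicative domain of $\phi$), which is exactly the mechanism by which the hypothesis on support projections makes $\phi$ a homomorphism.
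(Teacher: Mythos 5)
Your reductions are sound as far as they go: the equivalence of the conclusion with the vanishing of all defects, the graph picture $B=\{ex+\phi(ex): x\in B\}$ over $B_1=eB$, the defect formula $(I-P)(xy)=\phi(ex)\phi(ey)-\phi(ex\,ey)$, and the translation of the hypothesis into the invariance $r^\perp\phi(B_1)r=0$ (the paper performs essentially the same translation, writing $p_2$ for your $r$). But the argument stops exactly where the theorem begins: you explicitly defer ``the genuine work'' of deducing the vanishing of the defects from this invariance, so what you have is a correct reformulation of the statement, not a proof of it. Moreover, the mechanism you propose for the missing step is misdirected. Putting $B_1$ inside the multiplicative domain of $\phi$ would require $\phi(u^*u)=\phi(u)^*\phi(u)$ and $\phi(uu^*)=\phi(u)\phi(u)^*$ for $u\in B_1$; but $u^*u$ and $uu^*$ live outside the nonselfadjoint algebra $D$, the hypothesis says nothing about $\phi$ on such elements, and nothing in the data forces equality in the Schwarz inequality. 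It is also more than is needed: plain multiplicativity of $\phi$ on the subalgebra $B_1$, not bimodule multiplicativity over $M$, is what kills the defects.

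The mechanism that actually works---and it is the paper's proof, which in fact completes your own setup in a few lines---is the orthogonality of the two support projections of $C$ coming from $C^2=(0)$ (Corollary \ref{newa}). Let $p_1$ be the left support projection of $C$ and $r$ (the paper's $p_2$) its right support projection; since $zw=0$ for all $z,w\in C$ one gets $zp_1=0$ for every $z\in C$, hence $rp_1=p_1r=0$. Every $\delta\in C$ satisfies $\delta=p_1\delta=\delta r$. Now take $u=ex$, $v=ey$ with $x,y\in B$, and let $\delta=\phi(u)\phi(v)-\phi(uv)$ be the defect; note that $uv=e(xy)=eP(xy)\in B_1$, so your invariance applies both to $\phi(v)$ and to $\phi(uv)$: we get $\phi(u)\phi(v)r=\phi(u)\,r\phi(v)r=\bigl(r\phi(u)r\bigr)\bigl(\phi(v)r\bigr)\in rM$ and $\phi(uv)r=r\phi(uv)r\in rM$. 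Hence $\delta=\delta r\in rM$, and therefore $\delta=p_1\delta\in p_1rM=(0)$. This is precisely what the paper does, phrased without $\phi$: working directly with $y\in P(A)$ in corner form relative to $p=p_1+r$, it shows $p_1P(A)\,r=(0)$ and then $(I-P)(xy)=p_1(I-P)(xy)\,r=0$; note that for this part no extension to $I(A)^{**}$, no support projection $e$, and no Kadison--Schwarz inequality is needed at all. So the gap is real: the one step you left open is the entire content of the theorem, and the route you sketch for it (equality in Schwarz, multiplicative domains) does not lead there, whereas the square-zero orthogonality $p_1r=0$ does.
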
  

\begin{proof}  
 We assume the `left ideal' condition, the other case is similar and left to
the reader (or can be seen by looking at the opposite algebra' $A^{{\rm op}}$).
By replacing $A$ by $D$ we may assume that $P(A)$ generates $A$.
Suppose that $M$ is a von Neumann algebra on a Hilbert space $H$.
We set $p_1 =  \vee_{z \in C} \,  r(z) r(z)^*$ to be the left support projection 
of $C$, and set $p_2 = \vee_{z \in C} \, r(z)^* r(z)$, the right support projection of $C$.    Note that 
$z p_1 = 0$ for $z \in C$, which implies that $p_2  p_1 = 0$.   Let $p = p_1 + p_2$, a projection.  Our hypothesis
is equivalent to saying that  $$C \; P(A)^* = C \; P(A)^* p_2.$$ 
We may write
$$M = (1-p)M(1-p) + (1-p)M p +p M (1-p) + pMp .$$
Thus $M$ may be pictured as the direct sum of a von Neumann algebra with four corners (thus having a $2 \times 2$ matrix form).   
Let  $y \in P(A)$.   Then $y C \subset C \subset p M$, so that $(1-p) y p_1 = 0$.    On the other hand, by hypothesis,
$p_2 y^* (1-p) = p_2 y^* p_2  (1-p) = 0$.  Thus  $(1-p) y p_2 = 0$ and so  $(1-p) y p= 0$.  Therefore
$$y =  (1-p)y(1-p) +p y (1-p) + pyp  =    y(1-p)  + pyp .$$
Furthermore $p_2 y C \subset p_2 C = p_2 p_1 C = (0)$, and so $p_2 y p_1 = 0$.   And by hypothesis,
$p_1 y p_2 = p_1 (p_2 y^*)^* = p_1 (p_2 y^* p_2)^* = 0$.    So 
$$p_1 P(A) p_2 = (0).$$   Thus
$$y =  y(1-p) + p_1 yp_1 +  p_2 yp_2 .$$  
If also $x \in P(A)$ then $x =  x(1-p) + p_1 xp_1 +  p_2 x p_2 $, and so 
$$x y = x (1-p) y(1-p) + p_1 x p_1 y p_1 + p_2 x p_2 y p_2 ,$$
and so again $p_1 x y p_2 = 0$.   Since $C = (I-P)(A)$ we see that 
$$(I-P)(xy) = p_1 (I-P)(xy) p_2 =   p_1xy p_2 - p_1 P(xy) p_2 = 0,$$
so that $xy = P(xy) \in P(A)$.  So $P(A)$ is a subalgebra.  
 \end{proof}

{\bf Remarks.  } \ 1) \  If  $C \; P(A)^* \subset [BC]$ where $B = C^*_{\rm e}(A)$ then the 
first hypothesis in the previous result holds.

\medskip

2) \ If  $A$ is  the counterexample algebra
of $5 \times 5$ scalar matrices described above Lemma \ref{iscov}, it is very illustrative to compute 
the various associated objects of interest in our paper.    We leave the details to the reader as an exercise.
Here $C^*(A) = C^*_{\rm e}(A) = 
I(A) = M_3 \oplus M_2 \subset M_5$.   If $P$ is the projection in that 
example, namely the map that replaces the $1$-$3$ entry with $0$, then $C^*(P(A)) = I(P(A)) = 0 \oplus M_2 \subset M_5$.
A completely contractive completely positive projection $\tilde{P}$ on $C^*(A) = I(A)$ that extends $P$ is 
the map 
$$x \oplus \left[ \begin{array}{cccl} a & b    \\ c & d \end{array} \right]   \; \mapsto
\left[ \begin{array}{ccccl} \frac{a+d}{2} &   \frac{b}{2} & 0 & 0 & 0 \\  \frac{c}{2}
&  \frac{a+d}{2} &   \frac{b}{2}  & 0 & 0 \\ 0 &  \frac{c}{2} &  \frac{a+d}{2} & 0 & 0  \\
0  & 0 & 0 & a &  b
 \\
0 & 0 & 0  &  c &  d   
\end{array} \right]   \; , \qquad x \in M_3.$$ 
To see that this is completely contractive is a tiny modification of the proof of Lemma \ref{iscov}.   A  
completely contractive projection extending $I-P$ is is the projection onto the $1$-$3$ coordinate.   The support projection of $\tilde{P}$ defined just before Proposition \ref{ES} 
   is $e = 0 \oplus I_2$, which has complement
$r = I_3 \oplus 0$.  The projections $p_1, p_2, p$ from the 
proof of Theorem \ref{mac} are  $p_1 = E_{11}, p_2 = E_{33}, p = E_{11} + E_{33}$; and $1-p
= E_{22} + E_{44} + E_{55}$.  Also, $r - p = E_{22}$.  
Note that $C \; P(A)^* p_2 \neq C \; P(A)^*$, of course.   Indeed this example is
an excellent illustration of what is going on in the proof of
Theorem \ref{mac}.    Note that if we change the 
definition of $A$ by replacing either the $2$-$3$ entry or the $3$-$2$ entry
then the hypotheses of Theorem \ref{mac} {\em are} satisfied.  

Examining why the general example described in \ref{Gco} does not
satisfy the hypotheses of Theorem \ref{mac} is illustrative:
it is not hard to see that if it did then $v_1 w_2 z_2^* = 0$
for all $v, w, z \in V$.   However if 
$0 \neq \sum_{k=1}^n v^k_1 w^k_2 \in C$  then we 
obtain the contradiction
$0 \neq (\sum_{k=1}^n v^k_1 w^k_2)(\sum_{k=1}^n v^k_1 w^k_2)^* = 0$.

\bigskip

It would be interesting to investigate other conditions that might imply that 
$P(A)$ is a subalgebra, particularly when in the `standard position' 
(namely  $P : A \to A$ is a unital completely  bicontractive  projection whose range generates 
$A$ as an operator algebra).   Some which might be worth
investigating are 
if the algebra $C$ in Theorem \ref{mac} is a maximal ideal in $A$, or if $C$ contains the radical of $A$. 
Note that any one of these conditions rules out our counterexamples above.  

\section{Another condition}

We now look at another condition on a 
completely contractive projection $P$ which is automatic 
for bicontractive projections in the $C^*$-algebra case,
namely that the induced
projection on ${\rm Re}(A)$ is bicontractive.  We will not assume that
the induced
projection on ${\rm Re}(A)$ has {\em completely} contractive 
complementary projection $I-P$.  We are not able to solve the problem yet,
but have made some progress towards the solution.

\begin{lemma} \label{m1}  Let $A$ be a unital operator algebra,
and let $P : A \to A$ a unital completely contractive projection such that the induced 
projection on ${\rm Re}(A)$ is bicontractive.  We also write 
$P$ for an extension to a  unital completely contractive weak* continuous projection on the 
von Neumann algebra $B^{**}$, where $B$ is a $C^*$-algebra containing $A$ as a unital-subalgebra (see the 
argument in the proof of  Corollary   {\rm \ref{newaei}}).
   Let $e$ be the support projection of $P$ on $B$ as in {\rm \cite[p.\ 129]{ES}}.
If $x \in A \cap  e^\perp B e^\perp$ and $x = a+ib$ with $a = a^*, b = b^*$, then 
$\Vert a_+ \Vert = \Vert a_- \Vert = \Vert b_+ \Vert = \Vert b_- \Vert $. \end{lemma}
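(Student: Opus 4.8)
The plan is to reduce the four-way equality to a single ``balance'' statement for self-adjoint elements of $\mathrm{Ker}(P)$, prove that statement by a scaling argument, and then confront the genuinely hard point of relating the norms attached to $a$ with those attached to $b$. First I would record the relevant memberships. Since $x \in e^\perp B e^\perp$ we have $ex = xe = 0$, so by the facts quoted just above Proposition \ref{ES} we get $P(x) = P(exe) = 0$; and as the weak* continuous extension of $P$ to $B^{**}$ is unital and completely contractive on a von Neumann algebra, it is completely positive, hence $*$-linear, so $P(x^*) = 0$ too. Writing $y_\phi = {\rm Re}(e^{i\phi} x)$, each $e^{i\phi}x$ lies in $A \cap e^\perp B e^\perp$, so $y_\phi \in {\rm Re}(A)$ and $P(y_\phi) = 0$; note $y_0 = a$ and $y_{\pi/2} = -b$. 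Thus $a$, $b$, and all the $y_\phi$ are self-adjoint elements of ${\rm Re}(A) \cap \mathrm{Ker}(P)$.

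The engine of the argument is the following balance claim: \emph{if $y \in {\rm Re}(A)$ is self-adjoint with $P(y) = 0$, then $\|y_+\| = \|y_-\|$.} To prove it I would use only that $I - P$ is contractive on ${\rm Re}(A)$ and that $P$ is unital. For any real $t$ the element $1 - ty$ lies in ${\rm Re}(A)$, and since $P(1) = 1$ and $P(y) = 0$ we have $(I-P)(1 - ty) = -ty$, whence
$$ |t|\,\|y\| \;=\; \|(I-P)(1 - ty)\| \;\leq\; \|1 - ty\|. $$
Put $\alpha = \|y_+\|$ and $\beta = \|y_-\|$, so $\mathrm{spec}(y) \subset [-\beta,\alpha]$ with both endpoints attained and $\|y\| = \max(\alpha,\beta)$. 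If $\alpha > \beta$, then for large $t > 0$ one computes $\|1 - ty\| = \max(t\alpha - 1,\, 1 + t\beta)$, and once $t(\alpha - \beta) > 2$ this equals $t\alpha - 1 < t\alpha = t\|y\|$, contradicting the displayed inequality. Hence $\alpha \leq \beta$, and applying the same reasoning to $-y$ (which swaps the roles of $\alpha$ and $\beta$) gives $\alpha = \beta$.

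Applying the balance claim to $y = a$ and to $y = b$ yields $\|a_+\| = \|a_-\|$ and $\|b_+\| = \|b_-\|$; applying it to every $y_\phi$ shows, in the language of the numerical range $W(x) = \{\langle x\xi,\xi\rangle : \|\xi\| = 1\}$, that $\overline{W(x)}$ is symmetric about the origin (its support function satisfies $s(\theta) = s(\theta + \pi)$). The remaining and main obstacle is the cross equality $\|a_+\| = \|b_+\|$: central symmetry alone does not force the support function to be constant (an origin-centered ellipse is a counterexample), so this step cannot come from the level-$1$ bicontractivity of ${\rm Re}(A)$ by itself, and some explicit normal element would otherwise violate the conclusion. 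The natural route is to show that $\overline{W(x)}$ is in fact a \emph{disk} centered at $0$, equivalently that $\|{\rm Re}(e^{i\phi}x)\|$ is independent of $\phi$, in which case all four quantities equal $\tfrac12\|x\|$. Here I expect to exploit the full completely contractive structure together with the corner condition: using $P(z) = P(eze)$ one gets $P(x^*x) = P(xx^*) = 0$, and I would aim to deduce that $x$ is square-zero (or a scalar multiple of a tripotent $u(x)$ with $u(x)^2 = 0$, in the spirit of Corollary \ref{newa} and the Youngson-type identities), since for any $x$ with $x^2 = 0$ an elementary estimate shows $\overline{W(x)}$ is exactly the closed disk of radius $\|x\|/2$. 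Establishing this square-zero or disk property from the present hypotheses is the delicate part, and is where the completely contractive assumption on $P$ (rather than mere contractivity of the induced map on ${\rm Re}(A)$) must do the work.
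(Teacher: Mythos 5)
Your first two steps are sound and essentially match the paper's opening moves: you get $P(x)=P(x^*)=0$ from the support-projection facts (the paper uses the Kadison--Schwarz inequality instead, to the same effect), and your scaling argument with $1-ty$ is the same idea as the paper's shift argument with $a-\epsilon 1$ --- both exploit that $I-P$ fixes $a$, kills scalars, and is contractive on ${\rm Re}(A)$, while an asymmetric spectrum would let a translate of $a$ have strictly smaller norm. So the two balance equalities $\Vert a_+\Vert=\Vert a_-\Vert$ and $\Vert b_+\Vert=\Vert b_-\Vert$ are correctly established.

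However, the cross equality $\Vert a\Vert=\Vert b\Vert$, which you yourself identify as the main obstacle, is left unproven, so the proposal does not prove the lemma; and the route you sketch is not viable. Elements of $A\cap e^\perp Be^\perp$ need not be square-zero: what is true (Lemma \ref{m3}) is only that the tripotent part $u(x)$ satisfies $u(x)^2=0$, while $x=u(x)+y$ with $x^2=y^2$ possibly nonzero --- and that lemma is proved downstream of the present one, using precisely the four-way norm equality you are trying to establish, so your plan inverts the logical order and would at best be circular. Your diagnosis of what extra input is needed is also slightly off: it is not the complete contractivity of $P$ that closes the gap, but the fact that $A$ is an \emph{algebra}. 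Since $A\cap e^\perp Be^\perp$ is closed under products, the quadratic element $2x-x^2$ again satisfies the hypotheses, so your own balance claim applies to $y={\rm Re}(2x-x^2)=2a-a^2+b^2$, giving $\Vert y\Vert=\Vert y_+\Vert$; this is how the paper couples $a$ to $b$, via ${\rm Re}(x^2)=a^2-b^2$. Concretely: normalize $\Vert a\Vert=1\geq\Vert b\Vert$, take a state $\psi$ with $\psi(a_-)=1$ (so $\psi(a_+)=\psi(a_+^2)=0$ and $\psi(a_-^2)=1$), and compute $\psi(y)=-3+\psi(b_+^2+b_-^2)$; on the other hand, writing $y$ as a difference of positive operators gives $\Vert y\Vert=\Vert y_+\Vert\leq\Vert 2a_+-a_+^2+b^2\Vert\leq 2$, forcing $\psi(b^2)=1$, hence $\Vert b\Vert=1=\Vert a\Vert$. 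Replacing $x$ by $ix$ handles the opposite normalization. This quadratic trick is the ingredient missing from your proposal.
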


\begin{proof}     Suppose that $\Vert a \Vert \leq 1$.  By the Kadison-Schwarz inequality
$$P(a)^* P(a) \leq P(a^* a) = P(e^\perp  a^* a e^\perp) \leq P(e^\perp) = 0.$$
So $P(a) = 0$, and similarly $P(b) = 0$.   Suppose that 
$\Vert a_+ \Vert > \Vert a_- \Vert$.    Then by the spectral theorem for $a$, 
there exists $\epsilon > 0$ with $$\Vert a - \epsilon 1
\Vert = \Vert a_+ - a_-  - \epsilon 1
\Vert < \Vert a_+ - a_- \Vert = \Vert a 
\Vert.$$     Thus 
$\Vert a - \epsilon 1
\Vert < \Vert (I-P)(a - \epsilon 1) \Vert$, a contradiction.   So $\Vert a_+ \Vert \leq \Vert a_- \Vert$.
A similar argument shows that $\Vert a_- \Vert \leq \Vert a_+\Vert$, so that 
$\Vert a_+ \Vert = \Vert a_- \Vert$.  Similarly (or by replacing $x$ by $ix$),
$\Vert b_+ \Vert = \Vert b_- \Vert$.

Let $y = {\rm Re}(2x - x^2)$.   The last paragraph shows that $\Vert y \Vert = \Vert y_+ \Vert = \Vert y_- \Vert$.
Now assume that $\Vert a \Vert = 1 \geq \Vert b \Vert$.   So $\Vert a_\pm \Vert = 1$.  Let $\psi$ be a state 
with $\psi(a_-) =1$.   Then  $\psi(a_+) = 0$ or else $\psi(|a|) = \psi(a_+) + \psi(a_-) > 1$ which is impossible.  
 By standard arguments these imply that 
$\psi(a_-^2) =  1$ and  $\psi(a_+^2) =  0$.  Since $y = 2 a_+ - 2 a_- - (a_+^2 + a_-^2) + (b_+^2 + b_-^2),$ we have $$\psi(y) = -3  + \psi(b_+^2 + b_-^2).$$
It is well known that for a selfadjoint operator $T = T_+ - T_- = R-S$ with $R, S \geq 0$, we have
$\Vert T_+ \Vert \leq \Vert R \Vert$.  Thus $$\Vert y \Vert = \Vert y_+ \Vert \leq \Vert 2a_+ - a_+^2 +(b_+^2 + b_-^2)
\Vert \leq 2.$$   Since  $\psi(y) = -3  + \psi(b_+^2 + b_-^2)$ we must have $\psi(b_+^2 + b_-^2) = 1$,
so that $\Vert b^2 \Vert = 1 = \Vert b \Vert$.   Replacing $x$ by $ix$, we see that  $\Vert a \Vert = \Vert b \Vert$. 
  \end{proof}

\begin{lemma} \label{m2}  Let $A$ be a unital operator algebra,
and let $P : A \to A$ a unital completely contractive projection such that the induced 
projection on ${\rm Re}(A)$ is bicontractive.  We also write 
$P$ for an extension to a  unital completely contractive weak* continuous projection on the 
von Neumann algebra $B^{**}$, where $B$ is a $C^*$-algebra  containing $A$ as a unital-subalgebra (as in the
last result).
  Let $e$ be the support projection of $P$ on $B$ as in {\rm \cite[p.\ 129]{ES}}.
If $x \in A \cap  e^\perp B e^\perp$ and $x = a+ib$ with $a = a^*, b = b^*$, and 
$\Vert a \Vert = 1$, then $u(a)^2 = u(b)^2$. \end{lemma}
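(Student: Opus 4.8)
We have $x = a + ib$ with $a,b$ self-adjoint lying in $A \cap e^\perp B e^\perp$, and $\|a\| = 1$. We want to show $u(a)^2 = u(b)^2$, where $u(\cdot)$ is the tripotent from the introduction. From Lemma \ref{m1} we already know $\|a\| = \|b\| = 1$ and all four ``positive/negative parts'' $a_\pm, b_\pm$ have norm $1$. Recall too from the proof of Lemma \ref{m1} that $P(a) = P(b) = 0$, and more generally $P$ kills the real part of $2x - x^2$. The plan is to exploit the fact that various real polynomials in $a,b$ lie in $\mathrm{Re}(A) \cap e^\perp B e^\perp$ and are therefore killed by $P$, so that the equal-norm conclusion of Lemma \ref{m1} applies to them; this should force a spectral relationship between $a$ and $b$ that pins down the support tripotents.

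**The key polynomial identity to exploit.** The natural strategy is to feed carefully chosen elements of $A \cap e^\perp B e^\perp$ into the equal-norm conclusion of Lemma \ref{m1}. Since $x^2 = (a^2 - b^2) + i(ab + ba)$ lies in $A$, and is still in $e^\perp B e^\perp$ (because $e^\perp$ is a two-sided annihilator here), its real part $y = \mathrm{Re}(2x - x^2) = 2a - (a^2 - b^2)$ is a self-adjoint element of $A \cap e^\perp B e^\perp$ to which Lemma \ref{m1} was already applied, giving $\|y_+\| = \|y_-\| = \|y\|$. The first step is to redo the state computation from Lemma \ref{m1} more carefully: for a state $\psi$ with $\psi(a_-) = 1$ one extracts $\psi(a_-^2) = 1$, $\psi(a_+) = \psi(a_+^2) = 0$, and then $\psi(b_+^2 + b_-^2) = 1$. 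I would push this further by also testing states concentrated on $a_+$, and on the spectral projections of $b$, to compare $u(a)^2 = u(a_+) + u(a_-)$ (the sum of the range projections of $a_\pm$) with $u(b)^2$. The idea is that $u(a)^2$ is exactly the support projection of $a^2 = |a|^2$, i.e.\ the range projection of $a$ in $e^\perp B e^\perp$, and similarly for $b$, so the claim $u(a)^2 = u(b)^2$ is the statement that $a$ and $b$ have the \emph{same} support projection inside the corner $e^\perp B e^\perp$.

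**Reducing to support projections.** Since $u(a)$ is the tripotent with $u(a)u(a)^* = $ left support of $a$ and $u(a)^*u(a) = $ right support, and $a$ is self-adjoint, $u(a)$ is a self-adjoint partial isometry with $u(a)^2$ equal to the support projection $s(a)$ of $a$ (the range projection of $|a|$). So the target is $s(a) = s(b)$. I would aim to show both $a^2 \le \lambda b^2$ and $b^2 \le \mu a^2$ for suitable scalars, or more cleanly that $s(a) \le s(b)$ and $s(b) \le s(a)$, by showing that any spectral projection of $a$ orthogonal to $s(b)$ must vanish. Concretely: if $q$ is a spectral projection of $a$ with $q s(b) = 0$, then on the range of $q$ the element $b$ acts as $0$, and I would construct a self-adjoint element of $A \cap e^\perp B e^\perp$ whose positive and negative parts have \emph{unequal} norm unless $q = 0$, contradicting Lemma \ref{m1}. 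The quadratic $y = 2a - a^2 + b^2$ is the right vehicle: on the part of the spectrum where $b = 0$ it reduces to $2a - a^2$, whose positive and negative parts behave asymmetrically near the two ends of the spectrum of $a$.

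**The main obstacle.** The hard part will be converting the equal-norm conclusions, which are purely metric statements about a \emph{few} specific self-adjoint elements, into the \emph{operator inequality} $s(a) = s(b)$ between support projections. A single element $y$ only sees its own top and bottom spectral behaviour through its norm, so one run of Lemma \ref{m1} gives just one scalar constraint; the difficulty is choosing a rich enough family of test elements (e.g.\ $\mathrm{Re}((2x - x^2)z)$ or real parts of shifted polynomials $p(x)$ for various real polynomials $p$ with $p(0) = 0$, all of which remain in $A \cap e^\perp B e^\perp$) so that the resulting scalar constraints, taken together, force spectral alignment at every point. Managing the interaction between $a$ and $b$ — in particular handling the cross terms $ab + ba$ that appear in $x^2$ and which are \emph{not} self-adjoint-friendly — is where the real care is needed; I expect one must restrict attention to the real part and argue that the off-diagonal cross term cannot help the relevant state, or choose $x$ replaced by $ix$ and by $e^{i\phi}x$ to symmetrize. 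Once the spectral alignment is established on a dense enough set of states, $s(a) = s(b)$ follows, and since $u(a)^2 = s(a)$ and $u(b)^2 = s(b)$ we conclude $u(a)^2 = u(b)^2$.
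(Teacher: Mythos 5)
Your proposal has two problems, one conceptual and one structural. The conceptual one: you identify $u(a)^2$ with the support (range) projection $s(a)$ of $a$, and recast the lemma as the statement $s(a)=s(b)$. This identification is false. By the definition in the introduction, $u(a)$ is the largest tripotent $v$ with $vv^*a = v$; for self-adjoint $a$ of norm one it is the weak* limit of the odd powers $a^{2n+1}$, i.e.\ $u(a) = \chi_{\{1\}}(a) - \chi_{\{-1\}}(a)$, so $u(a)^2 = \chi_{\{1\}}(|a|)$ is the \emph{peak} spectral projection where $|a|$ attains the value $1$, not the support of $a$. (Compare $a = \mathrm{diag}(1,1/2)$ in $M_2$: there $u(a)^2 = E_{11}$ while $s(a) = I_2$.) So the target you set yourself, $s(a)=s(b)$, is not the statement of the lemma, and your proposed route --- killing spectral projections of $a$ orthogonal to $s(b)$ --- is aimed at the wrong object.

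The structural problem is the one you yourself flag as ``the main obstacle'': you never convert the state computation into the desired projection identity, and the fix you suggest (a ``rich enough family of test elements'' yielding ``spectral alignment'') is both vague and unnecessary. The idea you are missing is that the single computation already done in Lemma \ref{m1} suffices, because it holds for \emph{every} state $\psi$ with $\psi(a_-)=1$, not just one: each such $\psi$ satisfies $\psi(b_+^2+b_-^2)=1$. Now by \cite[Lemma 3.3 (i)]{ER4}, $\psi(a_-)=1$ iff $\psi(u(a_-))=1$, and $\psi(b_+^2+b_-^2)=1$ iff $\psi(u(b_+^2+b_-^2))=1$; moreover $u(b_+^2+b_-^2) = u(b^2) = u(b)^*u(b) = u(b)^2$, using the identity $u(y)^*u(y)=u(y^*y)$ and self-adjointness of $u(b)$ --- this identity is the other ingredient you need. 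Hence every state taking value $1$ at $u(a_-)$ takes value $1$ at $u(b)^2$, and by the standard fact that such an inclusion of faces of the state space implies ordering of the corresponding tripotents, $u(a_-) \le u(b)^2$. Similarly $u(a_+) \le u(b)^2$, so $u(a)^2 = u(a_+)+u(a_-) \le u(b)^2$; and since $\Vert b \Vert = 1$ by Lemma \ref{m1}, the argument is symmetric in $a$ and $b$, giving the reverse inequality. So the proof is a few lines once one invokes the Edwards--R\"uttimann characterization; your plan, even corrected to target the peak projections rather than the supports, leaves exactly this step open.
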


\begin{proof}    Since $b = b^*$, $u(b)$ is a selfadjoint tripotent 
and $u(b)^2$ is a projection.    It is well known that $u(x)^* u(x) = u(x^* x)$ (to see this
note that $x (x^*x)^n \to x u(x^*x)$, so that $x u(x^*x) = u(x)$ from which the 
relation is easy).  Hence  $u(b)^2 = u(b^2) = u(b_+^2 + b_-^2)$.
As we saw in the last proof, if  $\psi$ is a state 
with $\psi(a_-) =1$ then $\psi(b_+^2 + b_-^2) = 1$.     Now 
 $\psi(a_-) =1$ iff  $\psi(u(a_-)) =1$, and  $\psi(b_+^2 + b_-^2) = 1$
iff $\psi(u(b)^2) = 1$ (see \cite[Lemma 3.3 (i)]{ER4}).  
So $\{ u(a_-) \}' \cap S(B) \subset \{ u(b)^2 \}' \cap S(B)$, where $S(B)$ is the state space
and the `prime' is as in \cite{ER4}.   From this, as is well known
(and simple to prove), we have that $u(a_-) \leq  u(b)^2$.   Similarly,
$u(a_+) \leq  u(b)^2$, so that $u(a_-) + u(a_+) = u(a)^2 \leq  u(b)^2$.  
Similarly, $u(b)^2 \leq  u(a)^2$, so $u(a)^2 = u(b)^2$.  \end{proof}

\begin{lemma} \label{m3}  Let $A$ be a unital operator algebra,
and let $P : A \to A$ a unital completely contractive projection such that the induced 
projection on ${\rm Re}(A)$ is  bicontractive.  We also write 
$P$ for an extension to a  unital completely contractive weak* continuous projection on the 
von Neumann algebra $B^{**}$, where $B$ is a $C^*$-algebra containing $A$ as a unital-subalgebra
 (as in the last results).
   Let $e$ be the support projection of $P$ on $B$ as in {\rm \cite[p.\ 129]{ES}}.
Suppose that $x \in A \cap  e^\perp B e^\perp$ has norm $1$.   Then $u(x)^2 = 0$ and 
$x = u(x) + y$ for some $y \in B^{**}$
with $u(x) y = u(x) y^* = y u(x) = y^* u(x) =  0$.   Finally, 
$\Vert {\rm Re} \, x \Vert = 1/2$. \end{lemma}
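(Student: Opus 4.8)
The plan is to exploit Lemmas \ref{m1} and \ref{m2} not only for $x$ but for the whole rotated family $e^{i\theta}x$, and then feed the resulting rigidity into the peak--tripotent theory recalled in the introduction. Write $a = \mathrm{Re}\,x$ and $b = \mathrm{Im}\,x$, and for $\theta \in \Rdb$ set $a_\theta = \mathrm{Re}(e^{i\theta}x) = \cos\theta\,a - \sin\theta\,b$. Each $e^{i\theta}x$ again lies in $A \cap e^\perp B e^\perp$ and has norm one, so both lemmas apply to it; since $\mathrm{Im}(e^{i\theta}x) = a_{\theta-\pi/2}$, Lemma \ref{m2} gives that $a_\theta$ and $a_{\theta-\pi/2}$ have the \emph{same} support projection $u(a_\theta)^2 = u(a_{\theta-\pi/2})^2$ for every $\theta$, while Lemma \ref{m1} gives $\|(a_\theta)_+\| = \|(a_\theta)_-\|$ throughout. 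In particular $p := u(a)^2 = u(b)^2$ dominates $a$ and $b$, so $x = pxp$, and I would pass to the von Neumann algebra $p B^{**} p$. The key rigidity to carry forward is that for no angle can $a_\theta$ drop support relative to its $\pi/2$--rotate $a_{\theta-\pi/2} = \mathrm{Im}(e^{i\theta}x)$.

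The heart of the matter, and the step I expect to be the main obstacle, is to deduce from this that the peak tripotent $v := u(x)$ satisfies $v^2 = 0$. Here I would use the description from the introduction of $v = u(x)$ as the largest tripotent with $vv^*x = v$, together with $v^*v = u(x^*x) =: q$ and $vv^* = u(xx^*) =: q'$, and the elementary fact that $v^2 = 0$ is equivalent to $q \perp q'$. The plan is to prove $q \perp q'$ by contradiction: an overlap of the left and right peak projections would produce, for a suitable angle $\theta$, a unit vector annihilated by $a_\theta$ but not by $a_{\theta-\pi/2}$, that is, a genuine drop in the support of $a_\theta$ below that of its $\pi/2$--rotate, contradicting the rigidity of the previous paragraph. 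Producing this vector from the geometry of the peak partial isometry, and pinning down the right angle, is the delicate point, and is where the balance and the spectral/state technique of the two preceding lemmas must be combined with the peak theory.

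Granting $v^2 = 0$, the decomposition $x = v + y$ with $y = x - v$ should fall out of a direct computation with the peak relations. Two of the four orthogonality relations hold for any norm-one $x$: from $v = xq = q'x$ one gets $v^*x = q$ and $xv^* = q'$, whence $v^*y = v^*x - v^*v = 0$ and $yv^* = xv^* - vv^* = 0$; taking adjoints gives $u(x)y^* = y^*u(x) = 0$. The remaining relations $u(x)y = yu(x) = 0$ reduce, using $v^2 = 0$, to $vx = xv = 0$ (since $vy = vx - v^2$ and $yv = xv - v^2$), and I would extract these from the nonsingularity of the pencil $\{a_\theta\}$ together with $q \perp q'$.

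Finally, for $\|\mathrm{Re}\,x\| = \tfrac12$, once all four relations hold then every product between $\{v,v^*\}$ and $\{y,y^*\}$ vanishes, so $v$ and $y$ are orthogonally supported and $2\,\mathrm{Re}\,x = (v+v^*) + (y+y^*)$ is an orthogonal direct sum. The nilpotent partial isometry $v$ has $\|v+v^*\| = \|v\| = 1$, which already forces $\|\mathrm{Re}\,x\| \ge \tfrac12$. For the reverse inequality I would note that the equal-support condition on $\{a_\theta\}$ is inherited by $y$ on its corner $p-(q+q')$ (the $v$--part contributes full support in every direction), so the argument giving $u(\cdot)^2=0$ and the orthogonal peeling applies to $y$ as well; iterating the peeling writes $\mathrm{Re}\,y$ as an orthogonal sum of terms $\tfrac12(v_k+v_k^*)$, each of norm $\tfrac12\|v_k\| \le \tfrac12$, so $\|\mathrm{Re}\,y\| \le \tfrac12$. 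Then $\|2\,\mathrm{Re}\,x\| = \max(\|v+v^*\|,\|y+y^*\|) = 1$, giving $\|\mathrm{Re}\,x\| = \tfrac12$ as claimed.
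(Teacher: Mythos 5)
Your proposal is not a proof: it is a plan whose two hardest steps are explicitly deferred, and its opening move is based on a false identity. The false move first: you claim that $p := u(a)^2 = u(b)^2$ ``dominates $a$ and $b$, so $x = pxp$.'' This confuses the peak tripotent with a support projection. For $a = a^*$ of norm $1$, $u(a)$ is the weak* limit of $a^{2n+1}$, i.e.\ the spectral tripotent $\chi_{\{1\}}(a) - \chi_{\{-1\}}(a)$, so $u(a)^2$ is the spectral projection of $|a|$ at the value $1$; it does not dominate $a$. For example $a = {\rm diag}(1,\tfrac12)$ has $u(a)^2 = E_{11}$ and $u(a)^2\,a\,u(a)^2 \neq a$. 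The correct decomposition is $a = u(a) + a_\perp$ with $a_\perp$ (generally nonzero) orthogonal to $u(a)$, and the whole point of the lemma is to control the ``non-peak'' parts $a_\perp, b_\perp$. (Relatedly, $u(\cdot)$ is not scale-invariant, so your family $u(a_\theta)^2$ requires renormalizing at each angle.) Beyond this, the central assertion $u(x)^2 = 0$ is simply not established: you describe a contradiction you hope to produce and call it ``the delicate point,'' and likewise the relations $u(x)y = yu(x) = 0$ are to be ``extracted from the nonsingularity of the pencil'' with no argument given. These are precisely the content of the lemma. The one piece you do prove --- the two relations $u(x)y^* = y^*u(x) = 0$ from $u(x) = x\,u(x^*x) = u(xx^*)\,x$ --- is correct but is the routine half. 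Finally, your route to $\Vert {\rm Re}\,x\Vert = \tfrac12$ rests on iterating the whole construction on $y = x - u(x)$; but $y$ lies only in $B^{**}$, not in $A \cap e^\perp B e^\perp$, so Lemmas \ref{m1} and \ref{m2} cannot be applied to it, and the proposed ``peeling'' has no justification (nor any reason to terminate or converge).

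For contrast, the paper's mechanism, which you will need, runs as follows. Choose $\theta$ \emph{maximizing} $\Vert {\rm Re}(e^{i\theta}x)\Vert$ and normalize so this maximum is $1$; write $e^{i\theta}x = a + ib$. Rotating further by $\pi/4$ and using maximality shows $\frac{1}{\sqrt 2}(a \pm b)$ are contractions, hence so are $u(a)^2 \cdot \frac{1}{\sqrt 2}(a\pm b) = \frac{1}{\sqrt 2}(u(a) \pm u(b))$ (here Lemma \ref{m2}, $u(a)^2 = u(b)^2$, is used). Squaring these and using that the projection $u(a)^2$ is an extreme point of the unit ball forces the anticommutation $u(a)u(b) = -u(b)u(a)$. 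Consequently $w_1 = \frac12(u(a) + iu(b))$ is a partial isometry with $w_1^2 = 0$; a state argument shows $u(w_2w_2^*) = 0$ for $w_2 = \frac12(a_\perp + i b_\perp)$, whence $u(x/2) = w_1$, and all four orthogonality relations plus $u(x)^2 = 0$ follow because $u(a), u(b)$ are two-sidedly orthogonal to $a_\perp, b_\perp$. The norm statement then comes for free: $1 = \Vert w_1\Vert = \Vert u(a)^2 (e^{i\theta}x/2)\Vert \leq \Vert x/2\Vert$ forces $\Vert x \Vert = 2$ in this normalization, i.e.\ $\Vert {\rm Re}\,x \Vert = \tfrac12$ when $\Vert x \Vert = 1$. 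The anticommutation-via-extreme-point step is the idea your outline is missing, and without it neither $u(x)^2 = 0$ nor the norm identity can be reached along your route.
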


\begin{proof}     Suppose that $x \in A \cap  e^\perp B e^\perp$, and choose an angle $\theta$ so that $\Vert {\rm Re} \, (e^{i \theta} x) \Vert$ is maximized.
By Lemma \ref{m1} this equals   $\Vert {\rm Im} \, (e^{i \theta} x) \Vert$.   Write 
$z = e^{i \theta} x = a+ib$ with $a = a^*, b = b^*$.  
Scale $z$ so that $\Vert a \Vert = 1$ (so $\Vert b \Vert = 1$ by Lemma \ref{m1}).  Write $a = u(a) + a_\perp$ and $b = u(b) + b_\perp$.
Note that $u(a)  a_\perp
= u(a) (a - u(a)) = 0,$ since $u(a) a = u(a)^3 a = u(a)^2$.
Similarly $a_\perp u(a) = 0$, and $b_\perp u(b) = u(b) b_\perp = 0$.
Since  $u(b)^2 = u(a)^2$ by Lemma \ref{m2}, we have $u(a) b_\perp = 
u(a) u(b)^2 b_\perp = 0$, and similarly $b_\perp u(a) = a_\perp u(b) = u(b) a_\perp = 0$.
Hence  $a_\perp$ and $b_\perp$ are contractions by the orthogonality of 
$u(a)$ and $a_\perp$, and of $u(b)$ and $b_\perp$.  
Consider $$\frac{1+i}{\sqrt{2}} \; (a+ib) = \frac{a-b}{\sqrt{2}} + i \;  \frac{a+b}{\sqrt{2}} .$$
By the maximality property of $\theta$ we must have $\Vert  \frac{a-b}{\sqrt{2}} \Vert = \Vert
 \frac{a+b}{\sqrt{2}} \Vert \leq 1$. 
Now $$\frac{u(a) - u(b)}{\sqrt{2}}  = u(a)^2  \; \frac{a-b}{\sqrt{2}} \; \; \; \text{and} 
\; \; \; \frac{u(a) + u(b)}{\sqrt{2}}  = u(a)^2 \; 
  \frac{a+b}{\sqrt{2}},$$ so these are contractions.   Squaring each of these we see that 
$u(a)^2 - \frac{1}{2}(u(a) u(b) + u(b) u(a))$ and $u(a)^2 + \frac{1}{2}(u(a) u(b) + u(b) u(a))$
are contractions.   Since $u(a)^2$ is a projection, hence an extreme point, we deduce that 
$u(a) u(b) + u(b) u(a) = 0$, or $u(a) u(b)  = - u(b) u(a)$.   Using this, 
if $w_1 = \frac{1}{2} (u(a) + i u(b))$ then a simple computation shows that 
$w_1 w_1^* w_1 = w_1$, so that $w_1$ is a partial isometry.    Let $w = z/2$.
 Clearly $\Vert w \Vert \leq 1$, but now we see that $$1 = \Vert w_1 \Vert = \Vert u(a)^2 w \Vert \leq \Vert w \Vert.$$   So $\Vert w \Vert = 1$ and $\Vert z \Vert = \Vert x \Vert = 2$.   This proves the last assertion of the theorem, 
since $\Vert {\rm Re}(x) \Vert = \Vert {\rm Im}(x) \Vert \leq 1$ by the maximality property of $\theta$,
but they clearly cannot be strict contractions since $\Vert x \Vert = 2$.   So henceforth we may assume that
$\theta = 0$ and $z = x$.

   Let $w_2 =  \frac{1}{2} (a_\perp + i \, b_\perp)$, 
so that $w = w_1 + w_2$, and $w_1 w_2 = w_2 w_1 = w_1 w_2^* = w_2^* w_1 = 0$.  
Note that $u(w) = w_1 + u(w_2) \neq 0$, and  $u(e^{-i \theta} w) = e^{-i \theta}u(w)$,
so $\Vert x \Vert = 2$.  Also 
$w w^* = w_1 w_1^* + w_2 w_2^*$.   Suppose $\psi$ is a state with $\psi(w_2 w_2^*) = 1$.
Then  since $\Vert w w^* \Vert \leq 1$ we must have $\psi(w_1 w_1^*) = 0$,
which forces $\psi(u(a)^2) =  \psi(u(b)^2) = 0$. 
 Thus $\psi \notin \{ u(a^2) \} ' = \{ a^2 \}'$ (see \cite[Lemma 3.3 (i)]{ER4}), so that $\psi(a_\perp^2) \neq 1$ since 
$a^2 = u(a^2) + a_\perp^2$.  
On the other hand, 
$$1 = \psi(\frac{a_\perp^2}{4} + \frac{b_\perp^2}{4} + i(\frac{b_\perp a_\perp}{4} - \frac{a_\perp b_\perp}{4})).$$
We deduce  the contradiction that
$\psi(a_\perp^2) = 1$. 
This contradiction shows that $1 - w_2 w_2^*$ is strictly positive so that 
$u(w_2 w_2^*) = 0$.   Hence $u(w w^*) = u(w_1 w_1^*) = w_1 w_1^*$, and so 
$u(w) = \lim_n \, (w w^*)^n \, w = w_1 w_1^* w = w_1$.    

Finally, suppose that $x \in A \cap  e^\perp B e^\perp$ has norm $1$ (so that $x$ may be taken to 
be our previous $w$).
Then $u(x)^2 =  w_1^2 = 0$.    That $x = u(x) + y$, where  $u(x)$ is orthogonal to $y$ and $y^*$, 
follows because $w = u(w) + w_2$ and $u(w) w_2 = u(w) w_2^* =w_2 u(w) = w_2^* u(w) = 0$, the latter because $u(w)$ is a
linear combination of the selfadjoint $u(a), u(b)$, which are each orthogonal to $a_\perp$ and $b_\perp$.
  \end{proof}

\begin{corollary} \label{m}  If the conditions of the previous lemmas hold, 
and also  $A \cap  e^\perp B e^\perp = (0)$,  then $P(A)$ is a subalgebra of $A$.
\end{corollary}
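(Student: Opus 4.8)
The plan is to show that the hypothesis $A \cap e^\perp B e^\perp = (0)$ collapses the kernel of $P$ on the subalgebra generated by $P(A)$, after which the conclusion is immediate. Write $D$ for the closed subalgebra of $A$ generated by $P(A)$; since $P(A) \subseteq D \subseteq A$ we have $P(D) = P(A) \subseteq D$, so $P_{|D}$ is a projection of $D$ onto $P(A)$ with ${\rm Ker}(P_{|D}) = (I-P)(D)$. The engine of the argument is the support-projection identity recorded at the end of Section 2, which I would apply with the von Neumann algebra taken to be $M = B^{**}$ and $e$ the support projection of the (weak* continuous, unital, completely contractive) extension of $P$ to $M$, and $N$ the von Neumann algebra generated by $P(A)$ in $M$. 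That identity gives
$$ (I-P)(D) = {\rm Ker}(P_{|D}) = D \cap e^\perp M e^\perp. $$

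Next I would note that for $a \in A$ the conditions $ea = ae = 0$, $a \in e^\perp M e^\perp$, and $a \in e^\perp B e^\perp$ all coincide: if $e^\perp a e^\perp = a$ then $a$ is exhibited as $e^\perp b e^\perp$ with $b = a \in B$, and conversely. Hence the hypothesis reads $A \cap e^\perp M e^\perp = (0)$, and since $D \subseteq A$,
$$ {\rm Ker}(P_{|D}) = D \cap e^\perp M e^\perp \subseteq A \cap e^\perp M e^\perp = (0). $$
Because $(I-P)(d) \in {\rm Ker}(P_{|D})$ for each $d \in D$, this forces $P(d) = d$ on $D$, so $D = P(D) = P(A)$. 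Thus $P(A)$ coincides with the algebra it generates and is therefore a subalgebra of $A$.

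The one point I would check most carefully is the transfer of the support-projection identity from the $I(A)^{**}$ setting in which it was originally recorded to the present $B^{**}$ setting. This rests only on verifying that the extended projection on $M = B^{**}$ satisfies the hypotheses of Proposition \ref{ES} and that $e$ is its support projection there; once this is in place, the derivation (via Proposition \ref{ES} and the relation $P(x) = P(exe)$) applies verbatim. It is worth remarking that the detailed conclusions of Lemmas \ref{m1}--\ref{m3} are not actually needed for this corollary; their role is instead to describe $A \cap e^\perp B e^\perp$ in the harder case where it does not vanish.
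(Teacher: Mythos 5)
Your argument is correct and essentially the same as the paper's: both rest on the Section 2 support-projection machinery (Proposition \ref{ES}, equivalently the recorded identity ${\rm Ker}(P_{|D}) = D \cap e^\perp M e^\perp$) together with the observation that an element of $A$ lies in $e^\perp B^{**} e^\perp$ iff it lies in $e^\perp B e^\perp$. The paper applies Proposition \ref{ES} directly to each product, obtaining $xy - P(xy) \in A \cap e^\perp B^{**} e^\perp = (0)$ for $x, y \in P(A)$, while you equivalently collapse ${\rm Ker}(P_{|D}) = D \cap e^\perp M e^\perp$ to $(0)$ and conclude $D = P(A)$; this is the same computation packaged two ways.
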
 
\begin{proof}  For $x, y  \in P(A)$ we have $e x y e = e P(x y) e$ by Proposition \ref{ES}.
So  $xy - P(xy) \in e^\perp B^{**} e^\perp \cap A = (0)$.   Thus $P(A)$ is closed under products.  \end{proof}

\begin{corollary} \label{m4}  If the conditions of the previous lemmas hold, 
and also $B$ is commutative, then $A \cap  e^\perp B e^\perp = (0)$, and $P(A)$ is a subalgebra of $A$.
\end{corollary}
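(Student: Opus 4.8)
The plan is to notice that the two conclusions of the corollary are linked: Corollary~\ref{m} already shows that the vanishing $A \cap e^\perp B e^\perp = (0)$ forces $P(A)$ to be a subalgebra. Thus the whole task reduces to proving the first assertion, that this intersection is trivial, under the extra hypothesis that $B$ is commutative.

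First I would argue by contradiction: assume $A \cap e^\perp B e^\perp \neq (0)$ and fix an element $x$ of norm $1$ in it. Lemma~\ref{m3} then applies and yields the support tripotent $u(x)$ with $u(x)^2 = 0$. I would also extract from the proof of that lemma the fact that $u(x)$ is \emph{nonzero}: there the norm-$1$ element is identified with the element $w$, and $u(x) = u(w) = w_1$ with $\Vert w_1 \Vert = 1$. (Alternatively one may simply use that the support tripotent of a norm-$1$ element is never zero.)

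Next I would bring in commutativity. Since $B$ is a commutative $C^*$-algebra, $B^{**}$ is an abelian von Neumann algebra, hence $*$-isomorphic to some $L^\infty(\Omega,\mu)$; under this identification a tripotent $u$ obeys $|u|^2 u = u$, so $u(|u|^2 - 1) = 0$ and $|u|$ takes only the values $0$ and $1$. Consequently $u^2 = 0$ forces $u = 0$. Applied to $u(x)$, whose square vanishes, this gives $u(x) = 0$, contradicting $u(x) \neq 0$. Hence $A \cap e^\perp B e^\perp = (0)$, and Corollary~\ref{m} finishes the proof. The one point that needs care — and the only real obstacle — is securing that $u(x) \neq 0$ so that the contradiction actually bites; the cleanest route is to read this directly off the construction in the proof of Lemma~\ref{m3} rather than to reprove it.
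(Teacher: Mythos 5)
Your proposal is correct, and it differs from the paper's argument only in how the contradiction is extracted, though the difference is worth noting. The paper does not use the statement of Lemma \ref{m3} as a black box: it re-enters the \emph{proof} of that lemma, writing $e^{i\theta}x = a+ib$ with $\Vert a \Vert = 1$, and observes that under commutativity the anticommutation relation $u(a)u(b) = -u(b)u(a)$ obtained there forces $u(a)u(b) = 0$; combined with $u(a)^2 = u(b)^2$ from Lemma \ref{m2} this gives $u(a) = u(a)u(b)^2 = 0$, contradicting $\Vert a \Vert = 1$ (the support tripotent of a norm-one \emph{selfadjoint} element is nonzero, which is elementary from spectral theory). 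You instead quote only the stated conclusion $u(x)^2 = 0$ of Lemma \ref{m3}, together with $u(x) \neq 0$, and invoke the fact that a commutative C*-algebra (here $B^{**}$, which is indeed abelian) has no nonzero square-zero elements; note the tripotent identity $|u|^2 u = u$ is not even needed, since any normal element with $u^2=0$ has spectral radius, hence norm, zero. Your route is cleaner in that it treats Lemma \ref{m3} as a black box, at the cost of needing $u(x) \neq 0$ for a general (non-selfadjoint) norm-one $x$ --- but you justify this correctly, either by reading $u(x) = w_1$, a norm-one partial isometry, off the proof of Lemma \ref{m3}, or by the standard fact which the paper itself invokes as well known in the proof of Theorem \ref{matre}. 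Both arguments share the same skeleton: reduce via Corollary \ref{m} to showing $A \cap e^\perp B e^\perp = (0)$, and show that commutativity is incompatible with the square-zero/anticommuting structure that Lemma \ref{m3} attaches to a norm-one element of that intersection.
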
 
\begin{proof}  By the proof of Lemma \ref{m3}, if $x \in A \cap  e^\perp B e^\perp$, and 
$e^{i \theta} x = a+ib$ with $a = a^*, b = b^*$ and $\Vert a \Vert = 1$, we obtained $u(a) u(b)  = - u(b) u(a) = 0$, and $u(a) = u(a) u(b)^2 = 0$.
This is impossible since  $\Vert a \Vert = 1$, so $A \cap  e^\perp B e^\perp = (0)$.  Then 
apply Corollary \ref{m}.  \end{proof} 

As in Section 4, to show $P(A)$ is a subalgebra of $A$, we may replace $A$ by $D$, the closed algebra generated
by $P(A)$.   After this is done, in the previous lemmas  $A \cap  e^\perp B e^\perp$ becomes
$(I-P)(D)$.  

\begin{theorem} \label{matre}
Let $P$ be a unital completely contractive projection on $A$ such that  $I-P$ is contractive on {\rm Re}($A$).  Suppose that  $A$ is a subalgebra of $M_N$ for some $N \in \Ndb$
and let $D$ be the closed algebra generated by $P(A)$.  Then every element of $(I-P)(D)$ is nilpotent. Furthermore, if $D$ is semisimple then the range of $P$ is a subalgebra of $A$.  
\end{theorem}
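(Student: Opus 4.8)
The plan is to prove first that $(I-P)(D)$ contains no nonzero idempotent, deduce from this (via the Fitting decomposition) that every element is nilpotent, and finally feed this into semisimplicity. As in the discussion preceding the theorem I would replace $A$ by $D$, so that $P$ becomes a unital completely contractive projection whose range generates $D$ and for which the hypotheses of Lemmas \ref{m1}--\ref{m3} hold; I take $B = C^*(A)$, a finite-dimensional $C^*$-algebra (so $B^{**} = B$), whence the weak* continuous extension needed in those lemmas is just $P$ itself, with support projection $e$. By Corollary \ref{newaei} and the remark recorded after it, $(I-P)(D) = {\rm Ker}(P_{|D}) = D \cap e^\perp B e^\perp$ is an ideal of $D$, and each of its elements lies in $D \cap e^\perp B e^\perp$, so Lemma \ref{m3} applies to it. The one fact I extract from that lemma is: for every norm-one $z \in (I-P)(D)$, the tripotent $u(z)$ satisfies $u(z)^2 = 0$.

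The crux is then to rule out nonzero idempotents in $(I-P)(D)$. Suppose $p = p^2 \neq 0$ lies in $(I-P)(D)$, and set $q = p/\Vert p \Vert$ and $\sigma = \Vert p \Vert$. In finite dimensions $u(q) = \lim_n q (q^* q)^n = p f /\sigma$, where $f$ is the spectral projection of $p^* p$ onto its top eigenvalue $\sigma^2$; writing $V_r = {\rm Ran}(f)$ for the top right singular space and $V_l = p(V_r)/\sigma$ for the top left singular space, the condition $u(q)^2 = 0$ forces $V_l \perp V_r$. To see this cannot happen, choose a unit vector $\zeta \in V_r$ and put $\psi = p\zeta/\sigma \in V_l$, a unit vector with $p\psi = \psi$ by idempotency. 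Then $\langle \zeta, p^* \psi \rangle = \langle p\zeta, \psi \rangle = \sigma$, so the equality case of Cauchy--Schwarz gives $p^* \psi = \sigma \zeta$; applying $(p^*)^2 = p^*$ yields $p^* \zeta = \zeta$, whence $\langle p\zeta, \zeta \rangle = \langle \zeta, p^* \zeta \rangle = 1$. On the other hand $p\zeta = \sigma \psi$ with $\psi \perp \zeta$ gives $\langle p\zeta, \zeta \rangle = 0$, a contradiction. Hence $u(q)^2 \neq 0$, contradicting Lemma \ref{m3}, and so no such $p$ exists.

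Granting this, if some $x \in (I-P)(D)$ were not nilpotent then, since $(I-P)(D)$ is an ideal of $D$ and $1, x \in D$, the Fitting idempotent of $x$ onto the generalized eigenspaces for its nonzero eigenvalues is a polynomial in $x$ vanishing to high order at $0$, hence of the form $p = x^k h(x)$ for some $k$ and some polynomial $h$; this $p$ is a nonzero idempotent lying in $x^k \cdot D \subseteq (I-P)(D)$, which is impossible. Therefore every element of $(I-P)(D)$ is nilpotent, which is the first assertion. Finally, if $D$ is semisimple, then the nil ideal $(I-P)(D)$ is contained in the Jacobson radical of $D$, which is $(0)$; thus $(I-P)(D) = (0)$, i.e. $P_{|D} = I$ on $D$, and since $P(A) \subseteq D = P(D) \subseteq P(A)$ we conclude $P(A) = D$ is a subalgebra of $A$.

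I expect the main obstacle to be the second step, namely showing $u(q)^2 \neq 0$ for an arbitrary (possibly non-self-adjoint, higher-rank) idempotent: one might naively hope the norm-attaining part already squares nontrivially, but this can fail at the level of the top singular spaces, and it is precisely the idempotency relation $(p^*)^2 = p^*$, pushed through the Cauchy--Schwarz equality case, that forbids the orthogonality $V_l \perp V_r$. A secondary point needing care is the explicit identification of $u(q)$ as $p f /\sigma$ and the verification that taking $B = C^*(A)$ with $B^{**} = B$ legitimately places every element of $(I-P)(D)$ in the set to which Lemma \ref{m3} applies.
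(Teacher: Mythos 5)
Your route to nilpotency is genuinely different from the paper's. The paper takes a non-nilpotent norm-one $x\in (I-P)(D)$ and iterates squaring: by Lemma \ref{m3}, $x=u(x)+x'$ with $u(x)^2=0$ and $x'\perp u(x)$, so $x^2=(x')^2\perp u(x)$, and normalizing successive powers $x^{2^k}$ produces infinitely many nonzero, pairwise orthogonal tripotents $u(y_k)$, contradicting finite dimensionality. You instead use only the single conclusion $u(z)^2=0$ of Lemma \ref{m3}, rule out nonzero idempotents in $(I-P)(D)$ by your singular-space/Cauchy--Schwarz argument, and then reduce non-nilpotency to idempotents via the Fitting/Riesz idempotent $e(x)=x^mh(x)$, which indeed lies in the ideal $(I-P)(D)$ (it is a combination of powers $x^j$, $j\geq 1$) and is nonzero when $x$ has a nonzero eigenvalue. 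That part of your argument is correct and pleasantly self-contained on the algebra side; the semisimplicity step (nil ideal inside the radical) is the same as the paper's.

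However, the point you flagged as ``secondary'' is a genuine gap as written. Lemma \ref{m3} presupposes an extension of $P$ to a unital completely contractive weak* continuous projection on $B^{**}$, and such an extension is produced by injectivity, i.e.\ with $B=I(A)$ as in the proof of Corollary \ref{newaei}; for $B=C^*(A)$ no extension of $P$ to $B$ need exist (Example \ref{exoo} illustrates how badly extension problems can behave for the generated $C^*$-algebra), and ``the extension is just $P$ itself'' does not parse since $P$ is only defined on $A$. This is not merely bookkeeping: the tripotent $u(q)$ in Lemma \ref{m3} is formed with the involution of $I(A)$, not the adjoint of $M_N$. The identity map on $A$ extends to a $*$-epimorphism $\theta:C^*(A)\to C^*_{\rm e}(A)\subset I(A)$ whose kernel can be nontrivial, and $\theta\bigl(u_{M_N}(q)\bigr)=u_{I(A)}(q)$; thus the lemma only yields $u_{M_N}(q)^2\in {\rm Ker}(\theta)$, not $u_{M_N}(q)^2=0$, so your contradiction, run with the $M_N$ adjoint, does not engage the lemma's conclusion. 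The repair is available precisely because of your finite-dimensionality hypothesis: since $M_N$ is injective, $I(A)$ may be realized as the range of a minimal $A$-projection on $1_AM_N1_A$ with the Choi--Effros product, hence is a finite-dimensional $C^*$-algebra; represent it faithfully on a finite-dimensional Hilbert space and run your identification $u(q)=pf/\sigma$ and the Cauchy--Schwarz argument there (everything you used about the ambient algebra is that it is a finite-dimensional $C^*$-algebra containing the norm-$\sigma$ idempotent $p$). With $B=I(A)$ in place of $C^*(A)$ throughout, your proof is a valid alternative to the paper's.
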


\begin{proof}  Note that $(I - P) (D)$ is an ideal of $D$ by Corollary \ref{newaei}. Suppose $x \in (I - P)(D)$ has norm 
$1$.   Suppose that $x$ is not nilpotent.  Set $y_1 = x$.
By Lemma \ref{m3}, $x = u(x) +x_{'}$ where $u(x)^{2}=0$ and $x_{'} \perp u(x)$.  Furthermore $x^{2} = (x_{'})^{2}$ lies in $D$ and $x^{2} \perp u(x)$.    Similarly, since $x^2 \neq 0$ we set $y_2 = x^{2}/\Vert x^2 \Vert$
then $y_2  = u(y_2)+(y_2)_{'}$ where $u(y_2)$ and $(y_2)_{'}$ are perpendicular to each other,
 and  $u(y_2)$  is perpendicular   to $u(x)$ (this
is because $u(y_2)$ is a limit of products beginning and ending with $x^2$, and e.g.\ 
$x^2 u(x) = (x_{'})^2 u(x) = 0$. Continuing in this way we obtain an infinite sequence of 
norm $1$ elements $y_k$ such that that $u(y_n) \perp u(y_k)$ for $k \leq n$.   It is well known that $u(y) \neq 0$ if $\Vert y \Vert = 1$.  This contradicts finite 
dimensionality.   So $x$ is nilpotent.  Since $(I-P)(D)$ is an ideal of $D$ consisting of
nilpotents, it follows that it lies in the Jacobson radical of $D$.   Thus if $D$ is semisimple then 
$(I - P) (D) = (0)$, so that $P(A) = D$ as before.
\end{proof}

For the following we no longer assume $A$ is finite dimensional but retain the other assumptions of the above Theorem.

\begin{lemma}   \label{slow}  Let $P$ be a unital completely contractive projection on $A$ such that  $I-P$ is contractive on {\rm Re}($A$), and let $D$ be the closed algebra generated by $P(A)$. If
 $x \in (I - P)(D)$ and $||x|| = 1$, then $||x^{2^{n}} || \le  \frac{2}{2^{2^{n}}}$.  Also, $x$ is quasi-regular (that is, quasi-invertible).
\end{lemma}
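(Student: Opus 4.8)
The plan is to exploit the norm-halving structure already extracted in Lemma \ref{m3}, iterated along the sequence of dyadic powers. First I would recall the key orthogonality facts established in the proof of Lemma \ref{m3}: for a norm-one element $x \in (I-P)(D) = A \cap e^\perp B e^\perp$, writing (after a scalar rotation) $x = a + ib$ with $a,b$ selfadjoint, we have $x = u(x) + y$ where $u(x)^2 = 0$ and $u(x)$ is orthogonal to both $y$ and $y^*$, and moreover $\Vert \mathrm{Re}\, x \Vert = 1/2$, so that in fact $\Vert x \Vert = 2$ in the rescaling used there (equivalently, the genuine norm-one $x$ decomposes with the tripotent $u(x)$ carrying `half' the norm). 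The crucial consequence is that $x^2 = y^2$, since $u(x)^2 = 0$ and the cross terms vanish by orthogonality; this squaring kills the `large' tripotent part and forces a genuine contraction in norm at each dyadic step.

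The main step is then an induction on $n$. I would set $y_0 = x$ and, at each stage, apply Lemma \ref{m3} to the normalized dyadic power $x^{2^n}/\Vert x^{2^n} \Vert$ (which again lies in $(I-P)(D)$, an ideal by Corollary \ref{newaei}, hence is again in $A \cap e^\perp B e^\perp$). Squaring removes the tripotent part $u$ of this normalized element, and the orthogonality of $u$ to the remainder together with the relation $\Vert \mathrm{Re} \Vert = 1/2$ should yield the estimate $\Vert x^{2^{n+1}} \Vert \leq \tfrac{1}{2} \Vert x^{2^n} \Vert^2$. Unwinding this recursion from $\Vert x \Vert = 1$ gives exactly $\Vert x^{2^n} \Vert \leq 2 / 2^{2^n}$: indeed if $c_n = 2^{2^n} \Vert x^{2^n} \Vert$ then the recursion becomes $c_{n+1} \leq c_n^2 / 2$, and checking the base case $c_0 = 2\Vert x\Vert = 2$ gives $c_n \leq 2$ for all $n$, which is the claimed bound.

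The quasi-regularity of $x$ follows from this superexponential decay. The element $x$ is quasi-invertible precisely when the quasi-inverse series $-\sum_{k \geq 1} x^k$ converges in $D$, i.e.\ when the spectral radius of $x$ is strictly less than $1$; and $\Vert x^{2^n} \Vert^{1/2^n} \leq (2/2^{2^n})^{1/2^n} = 2^{1/2^n}/2 \to 1/2 < 1$, so the spectral radius is at most $1/2$. Hence $1 - x$ is invertible in $D^1$ and $x$ is quasi-regular.

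The step I expect to be the main obstacle is making the inductive halving rigorous, that is, deriving $\Vert x^{2^{n+1}} \Vert \leq \tfrac{1}{2}\Vert x^{2^n}\Vert^2$ cleanly at each level. The subtlety is that Lemma \ref{m3} is stated for a fixed norm-one element and involves an element-dependent choice of rotation angle $\theta$ and the associated decomposition $u(a), u(b), a_\perp, b_\perp$; one must verify that after squaring and renormalizing, the hypotheses reapply to the new element (it must still be norm one in $A \cap e^\perp B e^\perp$), and that the factor of $1/2$ genuinely propagates rather than degrading. Tracking the orthogonality relations through the square, using $x^{2^{n+1}} = (x^{2^n})^2$ and the fact that the tripotent part of the normalized $x^{2^n}$ is annihilated upon squaring, is where the care is needed; the remaining arithmetic of the recursion $c_{n+1} \leq c_n^2/2$ is then routine.
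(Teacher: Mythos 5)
Your overall skeleton matches the paper's: induct on dyadic powers using that $(I-P)(D)$ is an ideal (Corollary \ref{newaei}), establish the halving inequality $\Vert x^{2^{n+1}}\Vert \leq \tfrac{1}{2}\Vert x^{2^n}\Vert^2$, run the recursion $c_{n+1} \leq c_n^2/2$ from $c_0 = 2$, and deduce quasi-regularity from the resulting decay (your spectral-radius finish is a perfectly good variant of the paper's direct estimate $\sum_k \Vert x^k \Vert \leq 1 + \sum_m 2^m \Vert x^{2^m}\Vert < \infty$). However, the central halving inequality is exactly the step you leave unproved, and the mechanism you propose for it cannot deliver it. From $x = u(x) + y$ with $u(x)^2 = 0$ and $u(x)$ orthogonal to $y, y^*$ you do get $x^2 = y^2$, but the only norm information available on $y$ is that it is a contraction (in the notation of the proof of Lemma \ref{m3}, $y = w_2 = \tfrac{1}{2}(a_\perp + i b_\perp)$ with $a_\perp, b_\perp$ selfadjoint contractions; the proof there shows $u(w_2w_2^*) = 0$, which does not force $\Vert w_2 \Vert < 1$). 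So this route yields only $\Vert x^2 \Vert = \Vert y^2 \Vert \leq 1$ --- no gain at all. ``Killing the tripotent part'' is not what produces the factor $\tfrac{1}{2}$.

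What actually produces it --- and this is the paper's proof --- is a short positivity argument on real parts, which appears nowhere in your proposal. Write the normalized element $w = x^{2^n}/\Vert x^{2^n}\Vert = a + ib$ with $a, b$ selfadjoint; Lemmas \ref{m1} and \ref{m3} give $\Vert a \Vert = \Vert b \Vert = 1/2$. Then ${\rm Re}(w^2) = a^2 - b^2$, and since $a^2, b^2 \geq 0$ with $\Vert a^2 \Vert, \Vert b^2 \Vert \leq 1/4$, the sandwich $-b^2 \leq a^2 - b^2 \leq a^2$ gives $\Vert {\rm Re}(w^2)\Vert \leq 1/4$. Since $w^2$ again lies in the ideal $(I-P)(D)$, the last assertion of Lemma \ref{m3} applied to its normalization gives $\Vert w^2 \Vert = 2\Vert {\rm Re}(w^2)\Vert \leq 1/2$, which is the halving you need; the tripotent decomposition is never used. (If you insist on your decomposition, the same computation can be run on $y$, since ${\rm Re}(y^2) = \tfrac{1}{4}(a_\perp^2 - b_\perp^2)$, but the operative idea is still the difference-of-positive-squares estimate.) So the gap is precisely the step you flagged as the ``main obstacle'': it is not a bookkeeping issue about propagating the decomposition through the induction, but a missing --- fortunately short --- argument.
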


\begin{proof}  Let $x = a+ib$ as in previous Lemmas.  By Lemmas  \ref{m1} and \ref{m2}, $||a|| = ||b|| = 1/2$. Hence $||{\rm Re}(x^{2})|| = ||a^{2} - b^{2}|| \le 1/4$. It follows again from Lemma 5.1 that $||x^{2}|| \le 1/2$.  
The first result now follows by considering normalizations of further powers of 2, and using mathematical induction. It is easily seen that 
$$\sum_{k=1}^{\infty} || x^{k}|| \le 1 + \sum_{m=1}^{\infty} 2^{m} \, ||x^{2^{m}}|| \le 1 + \sum_{m=1}^{\infty} \, 2^{m} \, \frac{2}{2^{2^{m}}} < \infty.$$ It follows that $\sum_{k=1}^{\infty} x^{k}$ converges, so that
$1 - x$ is invertible.  \end{proof}

 {\bf Remark.}  It is still open whether the ideal $(I-P)(D)$ above consists entirely of
 quasi-regular elements.  If this is the case, then the above Theorem \ref{matre} holds for arbitrary unital operator algebras.    Note too that the assertion about  quasi-regulars in Lemma   \ref{slow} does follow from 
 Lemma \ref{m1}.  That result shows that the ideal $(I -P) (D)$ in $D$ has no nonzero real positive elements 
(for in the language of that result, if $a_{-} = 0$ then $a_{+} = b_+ = b_{-} = 0$). 
The ideas in the proof of Corollary 6.9 of \cite{BRII} then also show that if $x \in
{\rm Ball}((I-P)(D))$ then $x$ is quasi-regular.

\section{Jordan morphisms and Jordan subalgebras of operator algebras}

We recall that a {\em Jordan homomorphism} $T : A \to B$ is a linear map satisfying $T(ab+ba) = T(a) T(b) + T(b) T(a)$ for $a, b \in A$, or equivalently,
that $T(a^2) = T(a)^2$ for all $a \in A$ (the equivalence follows by applying $T$ to $(a+b)^2$).    By a {\em  Jordan operator algebra} we
shall simply mean a  norm-closed  {\em  Jordan subalgebra} $A$ of an operator algebra,  
namely a norm-closed   subspace closed under the `Jordan product' $\frac{1}{2}(ab+ba)$, or equivalently
 with $a^2 \in A$ for all $a \in A$ (that is, $A$ is
closed under squares).

It is natural to ask if the completely bicontractive algebra problem studied in Section 4
becomes simpler if the range of the projection $P : A \to A$ 
is also a Jordan subalgebra (that is $P(a)^2 \in P(A)$) for all $a \in A$.   We next dispense of this 
question: 

\begin{example}  \label{joup}   Let $y = E_{21} \oplus E_{12} \in M_4$, and let 
$$x = \left[ \begin{array}{ccccl} 0 & 0 & 0  & 1 \\
 0 & 0 & -1 & 0 \\  0 & 0 &  0 & 0 \\  0 & 0 &  0 & 0 
\end{array}
\right] . $$   Then $xy = -yx$, so that if $F$ is the span of $x$ and $y$ then $F$ is closed under squares.
However $F$ is not an algebra since $xy \notin F$. Let $V = \{ z \oplus z \in M_8 : z \in F \}$, and form the algebra
$A = A(V)$ described in Example \ref{Gco}.   By Corollary \ref{gex}  the canonical projection $P : A \to A$ which
replaces the $1$-$3$ entry of a matrix in $A(V)$ by $0$, 
 is  a (real completely positive)
 completely bicontractive and unital projection, but its range  is not a subalgebra.    However its range is a Jordan subalgebra; 
$P(A)$ is closed under squares since $z^2 = 0$ for $z \in F$.    Thus the  completely bicontractive algebra problem does not
become simpler if the range of the projection $P : A \to A$ 
is also a Jordan subalgebra.
\end{example}

The following variant of the `Banach--Stone theorem for $C^*$-algebras' will be evident to `JB-experts'.

\begin{lemma} \label{cic}  Let $A$ be a unital $C^*$-algebra,
and $T : A \to B(H)$ a unital complete isometry such that 
$T(A)$ is closed under taking squares (thus, $T(A)$ is a Jordan algebra).  Then $T(A)$ is a $C^*$-subalgebra
of $B(H)$, and $T$ is a $*$-homomorphism. \end{lemma}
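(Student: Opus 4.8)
The plan is to run the Kadison--Schwarz inequality in both directions, the second direction being made available precisely by the hypothesis that the range is closed under squares. First I would record the basic functoriality. Since $A$ is a unital $C^*$-algebra and $T$ is a unital complete isometry into $B(H)$, $T$ is a unital complete contraction between operator systems, and hence is completely positive and $*$-linear by the facts recalled in the introduction (see \cite[Section 1.3]{BLM}). In particular $T(a^*) = T(a)^*$, so the range $R = T(A)$ is a self-adjoint unital subspace of $B(H)$, that is, a unital operator system. Moreover $T^{-1} : R \to A$ is again a unital complete isometry, hence is itself completely positive. These are the only structural ingredients needed.

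Next I would show that $T$ is a Jordan homomorphism. For self-adjoint $a$, the Kadison--Schwarz inequality for the unital completely positive map $T$ gives $T(a)^2 \leq T(a^2)$. Since $b := T(a)$ is self-adjoint and $b^2 = T(a)^2$ lies in $R$ by the squares-closed hypothesis, the same inequality applied to the unital completely positive map $T^{-1}$ yields $a^2 = T^{-1}(b)^2 \leq T^{-1}(b^2) = T^{-1}(T(a)^2)$; applying the positive map $T$ then gives $T(a^2) \leq T(a)^2$. Hence $T(a^2) = T(a)^2$ for every self-adjoint $a$. Since both sides of the Jordan identity $T(xy+yx) = T(x)T(y) + T(y)T(x)$ are complex bilinear in $x,y$ and this identity holds on self-adjoint pairs (by expanding $T((x+y)^2)$), $T$ is a Jordan homomorphism.

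To upgrade this to a genuine homomorphism I would pin down the multiplicative domain. Kadison--Schwarz for $T$ gives both $T(a)^* T(a) \leq T(a^* a)$ and, applying it to $a^*$ and using $*$-linearity, $T(a) T(a)^* \leq T(a a^*)$. On the other hand the Jordan property just established yields the \emph{exact} identity $T(a^* a + a a^*) = T(a)^* T(a) + T(a) T(a)^*$. Comparing these forces equality in each of the two inequalities, so $T(a^* a) = T(a)^* T(a)$ for every $a \in A$. Thus the multiplicative domain of $T$ is all of $A$, and Choi's multiplicative domain argument (as already used in the proof of Proposition \ref{tr2}) shows $T$ is multiplicative. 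Being also $*$-linear and unital, $T$ is a unital $*$-homomorphism, whence $T(A) = R$ is a $C^*$-subalgebra of $B(H)$.

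The crux is this last step, and more precisely the role of the word ``complete''. The Jordan homomorphism property is essentially all one can extract isometrically, and a unital Jordan $*$-isomorphism onto its range may be genuinely non-multiplicative: the transpose on $M_n$ is the standard example of a unital Jordan $*$-isomorphism that is not a homomorphism. The transpose is famously not a complete isometry, and that is exactly the point. Completeness forces $T^{-1}$ to be completely positive, which is what makes the second Kadison--Schwarz inequality available and collapses the multiplicative domain to all of $A$, ruling out any anti-multiplicative part. I therefore expect no real obstacle beyond bookkeeping once the two-sided Kadison--Schwarz framework is in place.
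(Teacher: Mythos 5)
Your proof is correct, and it reaches the crucial intermediate fact --- that $T$ is a Jordan homomorphism --- by a genuinely different route from the paper. The paper notes that $T(A)$ is a unital selfadjoint subspace closed under squares, hence a JB$^*$-algebra, and invokes JB$^*$-triple theory (alternatively the isometry theorem of \cite{IR}, or a $C^*$-envelope argument) to conclude that the isometry $T$ is a Jordan morphism; it then finishes, as you do, with Choi's multiplicative domain theorem. You instead obtain the Jordan property elementarily by running Kadison--Schwarz in both directions: $T(a)^2 \le T(a^2)$ since $T$ is unital completely positive, and the reverse inequality from the unital completely positive map $T^{-1}$ --- this being exactly where the squares-closed hypothesis enters, since it puts $T(a)^2$ into the domain of $T^{-1}$. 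One point you should make explicit: Kadison--Schwarz is stated for maps defined on a $C^*$-algebra, whereas $T^{-1}$ lives only on the operator system $R = T(A)$; so first extend $T^{-1}$ to a unital completely positive map on all of $B(H)$ by Arveson's extension theorem \cite{Ar}, and apply Kadison--Schwarz to the extension, which agrees with $T^{-1}$ at $T(a)$ and at $T(a)^2$ precisely because both lie in $R$. (This is also why your argument genuinely consumes \emph{complete} positivity of $T^{-1}$, mere positivity not sufficing for the extension.) Your forcing step --- two ordered inequalities whose sums agree must each be equalities --- and the passage to multiplicativity via Choi are sound, though the paper gets the same conclusion a bit more directly by noting that each selfadjoint $x$ lies in the multiplicative domain and such elements span $A$. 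As for what each approach buys: yours is self-contained, avoiding the triple-theoretic black box, and displays where the hypotheses act; the paper's is shorter given the cited theory, and it isolates the fact that the Jordan step needs only isometry (the transpose on $M_n$ is, after all, a Jordan isomorphism), so that the complete isometry hypothesis is really consumed only in the Choi step via $2$-positivity of $T$ --- which slightly sharpens the diagnosis in your closing paragraph of why the transpose example is excluded.
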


\begin{proof}  Since such   $T$ is necessarily $*$-linear as we said in the introduction,  $T(A)$ is a JB*-algebra, hence a selfadjoint JB*-triple (see
e.g.\ \cite{Rod}).  By 
the theory of JB*-triples  $T$ is a Jordan homomorphism. (Two other proofs of this: look at the selfadjoint part and use 
the fact that isometries in that category are  Jordan homomorphisms \cite{IR};
or it can be deduced using the $C^*$-envelope as in the next proof).  In particular for each $x \in A_{\rm sa}$ we have
$T(x^2) = T(x)^2$, so by Choi's multiplicative domain result (see e.g.\ \cite[Proposition 1.3.11]{BLM})
we have $T(xy) = T(x) T(y)$ for all
$y \in A$.  So $T$ is a homomorphism and $T(A)$ is a  $C^*$-subalgebra.  \end{proof}

It is natural to ask if the analogous result is true for operator algebras.   That is,  if $B$ is a closed 
unital Jordan subalgebra of an 
operator  algebra  $A$, and if $B$ is unitally  and linearly completely 
isometric to another unital operator algebra, then is $B$ actually a subalgebra of $A$?
If the algebra is also commutative this is true and follows from the next result.

\begin{lemma} \label{cioa}  Let $A$ be a unital operator algebra,
and let $T : A \to B$ a unital complete isometry onto a unital Jordan operator algebra.  Then $T$ is a  Jordan homomorphism, and 
$T(a^n) = T(a)^n$ for every $n \in \Ndb$ and $a \in A$. \end{lemma}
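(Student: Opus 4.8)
The plan is to reduce everything to the $C^*$-algebra statement just proved in Lemma \ref{cic} by passing to injective envelopes. Since $A$ is unital it is approximately unital, so it sits as a unital subalgebra of the injective unital $C^*$-algebra $I(A)$, and the powers $a^n$ computed in $A$ agree with those computed in $I(A)$; likewise the unital operator space $B$ embeds unitally in $I(B)$, and the (power-associative) powers of an element of $B$ are the associative powers in $I(B)$. The goal is to produce a single $*$-isomorphism of $I(A)$ onto $I(B)$ that restricts to $T$, from which the assertions drop out by multiplicativity.

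First I would construct a surjective unital complete isometry $\Phi : I(A) \to I(B)$ extending $T$, by exactly the argument in the second proof of Theorem \ref{tr}. Viewing $T$ as a unital complete contraction $A \to I(B)$, injectivity of $I(B)$ extends it to a unital complete contraction $\Phi : I(A) \to I(B)$; viewing $T^{-1}$ as a unital complete contraction $B \to I(A)$, injectivity of $I(A)$ extends it to a unital complete contraction $\Psi : I(B) \to I(A)$. Then $\Psi \circ \Phi$ and $\Phi \circ \Psi$ are complete contractions restricting to the identity on $A$ and on $B$ respectively, so by rigidity of the injective envelopes they must be $I_{I(A)}$ and $I_{I(B)}$. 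Hence $\Phi$ is a unital complete isometry of $I(A)$ \emph{onto} $I(B)$ with $\Phi|_A = T$.

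The key step is then multiplicativity of $\Phi$. Because $\Phi$ is surjective, its range $\Phi(I(A)) = I(B)$ is a $C^*$-algebra and so is trivially closed under squares; thus Lemma \ref{cic}, applied to the unital complete isometry $\Phi$ on the unital $C^*$-algebra $I(A)$, shows that $\Phi$ is a $*$-homomorphism. (Alternatively, multiplicativity follows from the Kadison--Schwarz inequality applied to the unital completely positive maps $\Phi$ and $\Psi=\Phi^{-1}$ and combined as in the proof of Proposition \ref{tr2}.) Given this, for $a \in A$ and $n \in \Ndb$ we have $a^n \in A$, whence $T(a^n) = \Phi(a^n) = \Phi(a)^n = T(a)^n$, the product being taken in $I(B)$; since this element equals $T(a^n)$ it lies in $B$ and coincides with the $n$-th power in the Jordan algebra $B$. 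In particular $T(a^2) = T(a)^2$, so $T$ is a Jordan homomorphism. The only real obstacle is the passage from ``unital complete isometry'' to ``multiplicative,'' i.e.\ producing the $*$-isomorphism of injective envelopes; once that is in hand the conclusion is immediate, and indeed the argument never uses the Jordan-closedness of $B$ (it shows a posteriori that $B$ is automatically closed under squares).
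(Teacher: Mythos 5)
Your overall architecture is sound and genuinely different from the paper's: you build mutually inverse complete contractions $\Phi : I(A) \to I(B)$ and $\Psi : I(B) \to I(A)$ via injectivity and rigidity, and conclude $\Phi$ is a $*$-isomorphism; the paper instead invokes the universal property of the $C^*$-envelope, producing a $*$-homomorphism $\pi : C^*(T(A)) \to C^*_{\rm e}(A)$ with $\pi \circ T = I_A$, noting that the concrete powers $T(a)^n$ lie in $T(A)$ (closed under squares, hence inductively under all powers), so that $\pi(T(a)^n) = a^n = \pi(T(a^n))$ and injectivity of $\pi$ on $T(A)$ finishes. However, your proof has a genuine gap at the step where you assert that ``the (power-associative) powers of an element of $B$ are the associative powers in $I(B)$.'' The product of $I(B)$ is the Choi--Effros product $x \circ y = \Phi_0(xy)$ arising from a minimal $B$-projection $\Phi_0$ on a $B(K)$ containing $B$ unitally; for $b, c \in B$ there is no a priori reason that $\Phi_0\bigl(\frac{1}{2}(bc+cb)\bigr)$ equals the concrete element $\frac{1}{2}(bc+cb)$ of $B(K)$. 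That identification is exactly where the hypothesis that $B$ is a Jordan subalgebra of $B(K)$ must enter: one needs $\frac{1}{2}(bc+cb) \in B$, so that $\Phi_0$ fixes it, to see that the Jordan product (and hence, inductively, every power) inherited by $B$ from $I(B)$ agrees with the given one. Without this argument, your chain $T(a^n) = \Phi(a)^n = T(a)^n$ only proves an identity for $I(B)$-powers, not for the powers of the Jordan operator algebra $B$, which is what the lemma asserts.

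The clearest symptom of the gap is your closing parenthetical claim that the argument ``never uses the Jordan-closedness of $B$'' and yields it a posteriori. That claim is false, and the paper itself shows why: by Corollary \ref{gex} there is a unital completely bicontractive projection $P$ on a unital operator algebra $A$ whose kernel is an ideal and whose range is not closed under squares, and (as in the example following Lemma \ref{cioa}) the induced map $A/\ker P \to P(A)$ is a unital complete isometry from a unital operator algebra onto $P(A)$. If your argument truly avoided the Jordan hypothesis, it would show that $P(A)$ is closed under concrete squares, a contradiction. So your proof is repairable --- insert the short compatibility argument above, which is precisely where the Jordan hypothesis is used --- but as written the key identification is unjustified, and the assertion that the hypothesis is redundant is wrong.
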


\begin{proof}    Note that $T(a)^3$ is the Jordan product of $T(a)$ and $T(a)^2$, so 
$T(A)$ is closed under cubes.  Similarly it is closed under every power.    By the property of the $C^*$-envelope
mentioned in the introduction,
there exists a $*$-homomorphism $\pi : C^*(T(A)) \to C^*_e(A)$ with $\pi \circ T = I_A$.   
So $a^n = \pi(T(a)^n) = \pi(T(a^n))$.  Since $\pi_{\vert T(A)}$ is one-to-one, the results follow.
 \end{proof} 

{\bf Remark.}  In the proof of the last result one could have instead  used \cite[Corollary 2.8]{AS}.

\bigskip

We now answer the question above Lemma \ref{cioa}  in the negative: 

\begin{example}  Let $P :  A \to A$ be a completely contractive projection on an operator algebra  $A$ on $H$ whose kernel is an ideal $I$
(see e.g.\ Corollary \ref{newaei} or Lemma \ref{yoho}).   Then it is known that $B = A/I$ is an operator algebra
(see \cite[Proposition 2.3.4]{BLM}), and the induced map
$\tilde{P} : B \to  P(A)$ is a  completely isometric  isomorphism, and  $\tilde{P}$ will be unital
if $A$ and $P$ are unital.   If these hold, and in addition $P(A)$ is a Jordan subalgebra of $A$ which is not a subalgebra,
then  $T =  \tilde{P}: B \to A \subset B(H)$ is a unital complete isometry such that 
$T(B)$ is closed under taking squares (thus, $T(B)$ is a unital Jordan subalgebra), but  $T(B)$ is not a subalgebra, and $T$ is not
an algebra homomorphism.   In particular, we can take $A$ to be the algebra in Example 
\ref{joup}. \end{example}

We finish our paper with another `Banach--Stone type theorem for operator algebras': 

\begin{proposition} \label{bsoa}    Suppose that $T : A \to B$ is a  completely isometric surjection between approximately unital operator algebras.  Then $T$ is  real (completely)
positive if and only if $T$ is an algebra homomorphism.
 \end{proposition}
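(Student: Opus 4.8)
The plan is to prove the two implications separately, reducing each to the biduals, where $A^{**}$ and $B^{**}$ are \emph{unital} operator algebras. For the setup I would invoke the Banach--Stone theorem for operator algebras \cite[Theorem 4.5.13]{BLM} to write $T$ in the standard form $T = \pi(\cdot)\, u$, where $\pi : A \to B$ is a completely isometric surjective homomorphism and $u$ is a unitary in $M(B) \subseteq B^{**}$; passing to second duals gives a weak* continuous homomorphism $\pi^{**}$ with $T^{**}(a) = \pi^{**}(a)\, u$. The key preliminary observation, used in both directions, is that the second adjoint of a surjective completely isometric homomorphism is \emph{unital}: if $e$ denotes the idempotent $\pi^{**}(1)$, then $\pi^{**}(x) = e\,\pi^{**}(x)\,e$ for all $x$, so the range of $\pi^{**}$ is $e B^{**} e$, and surjectivity forces $e = 1_{B^{**}}$. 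Hence $T^{**}(1_{A^{**}}) = u$.

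For the implication that a homomorphism is real completely positive, I would note that, by the same idempotent argument applied to $T$ itself, $T^{**}$ is a unital completely contractive (indeed completely isometric) homomorphism on the unital operator algebra $A^{**}$. By the fact recalled in the introduction, any unital completely contractive map on a unital operator space is RCP; hence $T^{**}$ is RCP. Restricting and using ${\mathfrak r}_{M_n(A)} = M_n(A) \cap {\mathfrak r}_{M_n(A^{**})}$ together with ${\mathfrak r}_{M_n(B)} = M_n(B) \cap {\mathfrak r}_{M_n(B^{**})}$ then shows $T$ is RCP, in particular real positive.

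For the converse, suppose $T$ is real positive. Since ${\mathfrak r}_{A^{**}}$ is the weak* closure of ${\mathfrak r}_A$, and $T^{**}$ is weak* continuous with $T^{**}|_{A} = T$, real positivity passes to the bidual: $T^{**}({\mathfrak r}_{A^{**}}) \subseteq {\mathfrak r}_{B^{**}}$. The crux is then to show $u = 1$. For each real $s$ the element $(1+is)1_{A^{**}}$ lies in ${\mathfrak r}_{A^{**}}$, so $T^{**}((1+is)1) = (1+is)\, u \in {\mathfrak r}_{B^{**}}$, which unwinds to $(u + u^*) + s\, i(u - u^*) \ge 0$ for all $s \in \Rdb$. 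Writing $Y = i(u-u^*) = Y^*$, the inequality $(u+u^*) + sY \ge 0$ for all real $s$ forces the numerical range of $Y$ to vanish, hence $Y = 0$ and $u = u^*$; being a self-adjoint unitary with $u + u^* = 2u \ge 0$, we conclude $u = 1$. Therefore $T^{**} = \pi^{**}$ and $T = \pi$ is a homomorphism.

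The main obstacle I anticipate is precisely this last step of squeezing $u = 1$ out of real positivity: one must test $T^{**}$ on the right elements of ${\mathfrak r}_{A^{**}}$, and the line $\{(1+is)1 : s \in \Rdb\}$ through the order unit is exactly what detects any nontrivial rotation in $u$ (the example $T(z) = iz$ on $\Cdb$ shows that $T^{**}(1) = i$ is \emph{not} real positive, even though $i + \bar{\imath} = 0 \ge 0$). Everything else is bookkeeping: the reduction to the biduals, the unitality of $\pi^{**}$, and the transfer of real (complete) positivity between $A$ and $A^{**}$ via weak* density of the real-positive cone.
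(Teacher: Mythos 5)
Your proof is correct and follows essentially the same route as the paper's: reduce to the unital case via second duals, factor $T = \pi(\cdot)\,u$ using the Banach--Stone theorem \cite[Theorem 4.5.13]{BLM}, and show that real positivity forces the unitary $u = T^{**}(1)$ to be positive, hence equal to $1$. The only differences are cosmetic: for the easy direction the paper extends to unitizations via Meyer's theorem and then applies Wittstock's extension theorem rather than passing to biduals, and in the converse the paper simply cites \cite[Section 2]{BBS} for the fact that real positivity on $\Cdb 1$ yields $u \geq 0$, a fact you prove by hand with the $(1+is)1$ test.
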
  

  \begin{proof}   If $T$ is an algebra homomorphism then by Meyer's theorem  \cite[Theorem 2.1.13]{BLM}
$T$ extends to a unital completely isometric surjection between the unitizations, which then 
extends by Wittstock's extension theorem to a unital completely contractive, hence completely positive, map on a generated $C^*$-algebra.
So  $T$ is real completely positive.  

Conversely, suppose that  $T$ is real  positive.  We may assume that $A$ and $B$ are  unital, 
since $T^{**}$ is
a real positive   completely isometric surjection between unital 
operator algebras.   
By the Banach--Stone theorem for operator algebras \cite[Theorem 4.5.13]{BLM},
   there exists a completely isometric surjective homomorphism $\pi : A \to B$ and a unitary $u$
 with $u, u^{-1} \in B$ with $T = 
\pi(\cdot) u$.    The restriction of $T$ to $\Cdb 1$ is real positive, hence 
 positive (see \cite[Section 2]{BBS}).  Thus $u \geq 0$, and so $u = (u^2)^{\frac{1}{2}} = 1$.  Hence
$T$ is an algebra homomorphism.  
\end{proof}

{\bf Remark.}  One may also prove a limited version of this result for algebras with no kind of approximate identity by using the ideas in the proof of Proposition \ref{reau}.

\bigskip

There is also a  Jordan variant of the last result, a simple adaption of the main theorem in \cite{AS}.
Here we just state the unital case (see \cite{BNj} for more on this topic).

\begin{proposition} \label{bsoa2}    Suppose that $T : A \to B$ is an isometric surjection between unital Jordan operator algebras.  Then $T$ is real 
positive if and only if $T$ is a Jordan algebra homomorphism.
 \end{proposition}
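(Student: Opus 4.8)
The plan is to mirror the proof of Proposition \ref{bsoa}, substituting the Jordan analogue of the Banach--Stone theorem (the main theorem of \cite{AS}, as flagged in the remark preceding the statement) for the operator-algebra Banach--Stone theorem used there. For the easy direction I would suppose $T$ is a Jordan homomorphism. Since $T$ is onto the unital algebra $B$, the element $T(1_A)$ satisfies $T(1_A) \circ T(a) = T(1_A \circ a) = T(a)$ for all $a$, so $T(1_A)$ is a Jordan unit for $B$; by uniqueness of the Jordan unit $T(1_A) = 1_B$, and $T$ is unital. A unital isometry is in particular a unital contraction, so it maps $\mathfrak{F}_A = \{a : \Vert 1_A - a \Vert \leq 1\}$ into $\mathfrak{F}_B$; since $\mathfrak{r}$ is the closure of the positive real multiples of $\mathfrak{F}$, continuity then gives $T(\mathfrak{r}_A) \subseteq \mathfrak{r}_B$, that is, $T$ is real positive.

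For the converse, I would assume $T$ is real positive and invoke the main theorem of \cite{AS}, adapted to this setting, to present the surjective isometry $T$ as a unital Jordan isomorphism $\Phi : A \to B$ twisted by a unitary $u = T(1_A)$ (in $B$ or its multiplier algebra). It then suffices to show $u = 1_B$, for then $T = \Phi$ is a Jordan homomorphism. To pin down $u$, I would restrict $T$ to $\Cdb 1_A$, a copy of the $C^*$-algebra $\Cdb$; this restriction is again real positive, hence positive by the fact from \cite[Section 2]{BBS} that real positive maps on a $C^*$-algebra are positive. Applying this positivity to $1 \geq 0$ forces $u = T(1_A) \geq 0$, and in particular $u = u^*$. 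A positive unitary satisfies $u^2 = u^* u = 1_B$, so $u = (u^2)^{1/2} = 1_B$, and therefore $T = \Phi$ is a Jordan algebra homomorphism.

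The main obstacle will be the precise formulation and invocation of the Jordan Banach--Stone theorem from \cite{AS}: one must extract from it both that $T(1_A)$ is a unitary and that the untwisted map $\Phi$ is a genuine unital Jordan isomorphism, so that forcing $u = 1_B$ really collapses $T$ onto $\Phi$. The remaining ingredients are routine and parallel \cite[Theorem 4.5.13]{BLM} as used in the proof of Proposition \ref{bsoa}: that a unital isometry carries $\mathfrak{F}_A$ into $\mathfrak{F}_B$, that real positivity restricted to $\Cdb 1_A$ upgrades to positivity and thereby shows $u$ self-adjoint and positive, and that a positive unitary is the identity. I expect no genuine difficulty once the statement of \cite{AS} is in hand, which is precisely why the remark describes the result as a simple adaption of that theorem.
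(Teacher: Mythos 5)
Your proposal is correct and follows essentially the same route as the paper: the forward direction uses surjectivity and the Jordan homomorphism identity to show $T(1_A)=1_B$ and then invokes the fact that unital contractions are real positive, while the converse applies the Arazy--Solel Banach--Stone theorem (\cite[Corollary 2.10]{AS}) to write $T$ as a Jordan isomorphism twisted by the unitary $u = T(1_A)$, and uses real positivity on $\Cdb 1_A$ to force $u \geq 0$, hence $u = (u^2)^{1/2} = 1_B$. The only cosmetic differences are your appeal to uniqueness of the Jordan unit (the paper instead computes with a preimage of $1_B$) and your unwinding of the $\mathfrak{F}_A$ argument where the paper simply cites \cite[Section 2]{BBS}.
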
  

  \begin{proof}    If $T$ is a Jordan algebra homomorphism and $u = T(1_A)$ and $T(v) = 1_B$
then $2u = u 1_B + 1_B u  = 2T(1_A \cdot v) = 2T(v) = 2 \cdot 1_B$.   So  $u = 1_B$.  
However a unital contractive map is real positive \cite[Section 2]{BBS}.  

The converse follows by the same proof as for Proposition \ref{bsoa}, but using the form of 
the Banach--Stone theorem for operator algebras in \cite[Corollary 2.10]{AS}.  By that result 
   $T(1)$ is a unitary $u$
 with $u, u^{-1} \in B$.  Moreover $T(\cdot) u^{-1}$  is an isometric surjection onto $B$, 
so by the same result it it is a Jordan homomorphism $\pi$.   We have  $T = 
\pi(\cdot) u$, and we finish as before.
\end{proof}

{\em Acknowledgements.}  
Some of this material was presented at the
AMS Special Session on Operator Algebras and Their Applications: A Tribute to Richard V. Kadison, January 2015.
In the first authors article \cite{B2015} for the conference proceedings for this AMS Special Session
there is a very brief survey of some of the
material in the present paper.
Corollary \ref{san1} and part of Theorem  \ref{rcpua}  were stated in \cite{B2015} first, and later added to the
present paper.   We thank Joel Feinstein for a conversation during which we together arrived at an insight needed 
in these results.     We are also indebted to the referee for suggesting that we consider the Jordan algebra
version of the topics of this paper; this insight led to the work \cite{BNj} in progress.

\end{document}